\newcommand{\inter}{\operatorname{int}}
\newcommand{\til}{\widetilde}
\newcommand{\sm}{\setminus}
\newcommand{\ol}{\overline}
\newcommand{\bd}{\partial}
\newcommand{\ie}{i.e.\ }
\DeclareMathOperator{\diff}{\rm{Diff}}
\newcommand{\id}{\mathrm{Id}}
\newcommand{\foot}{\footnote}
\newcommand{\ssq}{\ensuremath{\subseteq}}
\newcommand{\smin}{\ensuremath{\setminus}}
\newcommand{\eps}{\ensuremath{\varepsilon}}
\newcommand{\wh}{\ensuremath{\widehat}}
\newcommand{\dist}{\ensuremath{\mathop{\rm dist}}}
\newcommand{\diam}{\ensuremath{\mathrm{diam}}}
\newcommand{\kreis}{\ensuremath{\mathbb{T}^{1}}}
\newcommand{\alphlist}{\begin{list}{(\alph{enumi})}{\usecounter{enumi}\setlength{\parsep}{2pt}
      \setlength{\itemsep}{1pt} \setlength{\topsep}{5pt}
      \setlength{\partopsep}{3pt}}}
\newcommand{\arablist}{\begin{list}{(\arabic{enumi})}{\usecounter{enumi}\setlength{\parsep}{2pt}
          \setlength{\itemsep}{1pt} \setlength{\topsep}{5pt}
          \setlength{\partopsep}{3pt}}}
\newcommand{\romanlist}{\begin{list}{(\roman{enumi})}{\usecounter{enumi}\setlength{\parsep}{2pt}
              \setlength{\itemsep}{1pt} \setlength{\topsep}{5pt}
              \setlength{\partopsep}{3pt}}}
\newcommand{\Romanlist}{\begin{list}{(\Roman{enumi})}{\usecounter{enumi}\setlength{\parsep}{2pt}
              \setlength{\itemsep}{1pt} \setlength{\topsep}{5pt}
              \setlength{\partopsep}{3pt}}}
\newcommand{\bulletlist}{\begin{list}{$\bullet$}{\setlength{\parsep}{2pt}
                \setlength{\itemsep}{1pt} \setlength{\topsep}{5pt}
                \setlength{\partopsep}{3pt}\setlength{\leftmargin}{15pt}}} 
\newcommand{\Alphlist}{\begin{list}{(\Alph{enumi})}{\usecounter{enumi}\setlength{\parsep}{2pt}
      \setlength{\itemsep}{1pt} \setlength{\topsep}{5pt}
      \setlength{\partopsep}{3pt}}}
 \newcommand{\listend}{\end{list}}
\newcommand{\T}{\ensuremath{\mathbb{T}}}
\newcommand{\N}{\ensuremath{\mathbb{N}}} 
\newcommand{\R}{\ensuremath{\mathbb{R}}}
\newcommand{\Z}{\ensuremath{\mathbb{Z}}}
\newcommand{\Q}{\ensuremath{\mathbb{Q}}}
\newcommand{\A}{\ensuremath{\mathbb{A}}}
\newcommand{\cA}{\mathcal{A}}
\newcommand{\cB}{\mathcal{B}}
\newcommand{\cC}{\mathcal{C}}
\newcommand{\cF}{\mathcal{F}}
\newcommand{\cH}{\mathcal{H}}
\newcommand{\cL}{\mathcal{L}}
\newcommand{\cR}{\mathcal{R}}
\newcommand{\cU}{\mathcal{U}}
\newcommand{\cX}{\mathcal{X}}
\newcommand{\cY}{\mathcal{Y}}
\newcommand{\iLim}{\ensuremath{\lim_{i\rightarrow\infty}}}
\newtheoremstyle{tobthm}{3pt}{3pt}{\itshape}{0pt}{\bfseries}{.}{0.5eM}{}
\theoremstyle{tobthm}
\newtheorem{definition}{Definition}[section]
\newtheorem{thm}[definition]{Theorem}
\newtheorem{lem}[definition]{Lemma}
\newtheorem{cor}[definition]{Corollary}  
\newtheorem{prop}[definition]{Proposition}
 \newtheorem{claim}[definition]{Claim}
\newtheoremstyle{tobrem}{3pt}{3pt}{\normalfont}{0pt}{\bfseries}{.}{0.5em}{}
\theoremstyle{tobrem}
\newtheorem{rem}[definition]{Remark}
\numberwithin{equation}{section}
\numberwithin{figure}{section}
\title{\large\textsc{Poincar\'e theory for decomposable cofrontiers}}
\author{T.~J\"ager\thanks{Department of Mathematics, TU Dresden, Germany. Email:
    {\tt Tobias.Oertel-Jaeger@tu-dresden.de}}\ \ and
  A.~Koropecki\thanks{Universidade Federal Fluminense, Niter\'oi, Brasil. Email: {\tt ak@id.uff.br}}}
\date{}
\newcommand{\filled}{\ensuremath{\textrm{fill}}}
\begin{document}

\setlength{\abovedisplayskip}{1.0ex}
\setlength{\abovedisplayshortskip}{0.8ex}

\setlength{\belowdisplayskip}{1.0ex}
\setlength{\belowdisplayshortskip}{0.8ex}
\maketitle

\abstract{We extend Poincar\'e's theory of orientation-preserving
  homeomorphisms from the circle to circloids with decomposable
  boundary. As special cases, this includes both decomposable
  cofrontiers and decomposable cobasin boundaries. More precisely, we
  show that if the rotation number on an invariant circloid $A$ of a
  surface homeomorphism is irrational and the boundary of $A$ is
  decomposable, then the dynamics are monotonically semiconjugate to
  the respective irrational rotation.  This complements classical
  results by Barge and Gillette on the equivalence between rational
  rotation numbers and the existence of periodic orbits and yields a
  direct analogue to the Poincar\'e Classification Theorem for circle
  homeomorphisms. Moreover, we show that the semiconjugacy can be
  obtained as the composition of a monotone circle map with a
  `universal factor map', only depending on the topological structure
  of the circloid. This implies, in particular, that the monotone
  semiconjugacy is unique up to post-composition with a rotation.

  If, in addition, $A$ is a minimal set, then the semiconjugacy is
  almost one-to-one if and only if there exists a biaccessible point.
  In this case, the dynamics on $A$ are almost automorphic.
  Conversely, we use the Anosov-Katok method to build a
  $C^\infty$-example where all fibres of the semiconjugacy are
  non-trivial.}

\noindent
\section{Introduction}

Given an orientation-preserving circle homeomorphism $\varphi$, the
Poincar\'e Classification Theorem states that the rotation number
$\rho(\varphi)$ is well-defined and determines the qualitative
dynamical behaviour of $\varphi$, in the sense that $\rho(\varphi)$ is
rational if and only if $\varphi$ has a periodic orbit and irrational
if and only if $\varphi$ is monotonically semiconjugate to the
respective irrational rotation. This result provides the basis for a
rather complete understanding of invertible dynamics on the circle. At
the same time the study of cofrontiers, circloids and other classes of
circle-like continua, like basin boundaries, co-basin boundaries or
pseudocircles, has a long history in plane topology and continuum
theory, going back to Kuratowski, Cartwright, Littlewood, Bing and
others
\cite{kura-frontier,cartwright-littlewood,Bing1951Pseudocircle}.
Recently the topic has gained further momentum, since invariant
circloids play a crucial role in surface dynamics. It is therefore a
natural question to ask whether an analogue to Poincar\'e's classical
result holds for these more general continua.  In this article,
we extend the Poincar\'e Classification to circloids with decomposable
boundary.\foot{We recall that a continuum is called {\em decomposable}
  if it can be written as the union of two non-empty proper
  subcontinua.}  In order to define this notion, we let $\kreis=\R/\Z$
and $\A=\kreis\times \R$ and call a continuum $A\ssq\A$ an {\em
  essential annular continuum} if $\A\smin A$ consists of exactly two
connected components, both of which are homeomorphic to $\A$. Further,
$A$ is called an essential circloid if it does not contain any
strictly smaller essential annular continuum as a subset, and a cofrontier if it is a circloid with empty interior.
We refer to Section~\ref{Preliminaries} for further explanations and details.

In the context of the dynamics of surface homeomorphisms, circloids
may appear in various situations. For instance, they separate adjacent
invariant topological disks or annular domains
\cite{BargeGillette1991CofrontierRotationSets,brechner-et-al,WalkerPrimeEnd,Kennedy1994Pseudocircles,franks/lecalvez:2003},
and any periodic point free continuum of a non-wandering surface
homeomorphism is an annular continuum \cite{koropecki2010aperiodic}
that can further be decomposed into a dense union of invariant
circloids and transitive annuli \cite{jaeger:2010a}.  On the
two-torus, the existence of invariant circloids can often be deduced
from information on the rotation set
\cite{jaeger:2009c,Davalos2013SublinearDiffusion,GuelmanKoropeckiTal2012Annularity},
and invariant ``foliations'' consisting of circloids play an important
role for the problem of linearisation \cite{jaeger:2009b}. The
respective results in topological dynamics have further applications
in the theory of $C^r$-generic diffeomorphisms
\cite{franks/lecalvez:2003,KoropeckiNassiri2010GenericTransitivity,KoroLeCalvezNassIrrPE}.
It is thus of vital interest to understand the interplay between the
topological structure and the possible dynamical behaviour on such
continua. However, while the relation between rational rotation
numbers and periodic orbits is quite well-understood
\cite{cartwright-littlewood,BargeGillette1991CofrontierRotationSets,KoroLeCalvezNassIrrPE},
the more intricate question of irrational rotation factors has been
left completely open so far.

The problem is complicated by the fact that the rotation number on
invariant circloids is not necessarily unique. Non-degenerate rotation
intervals have been shown to occur on the Birkhoff attractor
\cite{LeCalvez1988BirkhoffAttractor} and, more recently, the
pseudocircle \cite{BoronskiOprocha2014InverseLimitPseudocircle}. Such
examples can be excluded by adding a mild recurrence assumption
\cite{KoroLeCalvezNassIrrPE}, but even in the case of a unique
rotation number a semiconjugacy does not have to exist. This was shown
by Handel \cite{handel:1982} and Herman \cite{herman:1986}, who
realised the pseudocircle as a minimal set of a smooth surface
diffeomorphism. In these examples, the rotation number is irrational,
but the dynamics are not semiconjugate to the corresponding rotation.
While the pseudocircle is the paradigm example of a circle-like
continuum with highly intricate topological structure, a modification
of the construction can be used to produce a variety of more `regular'
indecomposable continua with the same behaviour. Hence, {\em
  decomposability} of the circloid presents itself as the obvious
minimal requirement for a possible analogue to the Poincar\'e
classification. As the following result shows, it turns out to
be sufficient as well. Recall that a monotone map is one with connected fibers. 

\begin{thm} \label{t.poincare} Suppose $\varphi\colon \A \to \A$ is a
  homeomorphism homotopic to the identity with an essential
  $\varphi$-invariant circloid $A$ with decomposable boundary. Then
  every point of $A$ has a well-defined rotation number $\rho\in
  \kreis$ which is independent of the point, and
\begin{itemize}
\item $\rho$ is rational if and only if there is a periodic point in
  $A$.
\item $\rho$ is irrational if and only if $\varphi|_A$ is
  monotonically semiconjugate to the corresponding irrational rotation
  by $\rho$ on $\kreis$.
\end{itemize}
\end{thm}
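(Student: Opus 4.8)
The plan is to reduce the whole statement to the classical Poincar\'e Classification Theorem for circle homeomorphisms via a single topological device. First I would prove the purely topological fact that a circloid $A$ with decomposable boundary carries a canonical monotone degree-one surjection $h\colon A\to\kreis$ — the \emph{universal factor map} of the abstract — which is intrinsic to the topology of $A$ and therefore automatically equivariant: every orientation-preserving homeomorphism of $\A$ leaving $A$ invariant is conjugated by $h$ to an orientation-preserving homeomorphism of $\kreis$. Granting this, write $g\colon\kreis\to\kreis$ for the circle homeomorphism induced by $\varphi$, so that $h\circ\varphi|_A=g\circ h$. Since $h$ has degree one (i.e.\ is homotopic to the restriction to $A$ of the canonical projection $\A\to\kreis$), a lift $\til h$ of $h$ to the universal covers differs from the first coordinate by a bounded amount; hence for every $x\in A$ the horizontal displacement of $\til\varphi^{\,n}(x)$ stays within a bounded distance of $\til g^{\,n}(\til h(x))$. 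Dividing by $n$ and letting $n\to\infty$ then shows that the rotation number of $\varphi|_A$ at $x$ exists, equals $\rho(g)$, and in particular does not depend on $x$; call this common value $\rho$. This gives the first assertion and, what is essential, the \emph{uniqueness} of the rotation number, which for general circloids fails (Birkhoff attractor, pseudocircle); decomposability enters precisely here, through the existence of $h$.

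\textbf{The rational case.} If $\rho$ is rational then $g$ has a periodic orbit by Poincar\'e's theorem. For a periodic point $p$ of $g$, the fibre $h^{-1}(p)\ssq A$ is a subcontinuum permuted periodically by $\varphi$ which, by construction of the universal factor map, is inessential in $\A$ and hence contained in a topological open disc. Applying the Cartwright--Littlewood fixed point theorem (or Brouwer's plane translation theorem) to a suitable iterate of $\varphi$ on that disc yields a periodic point of $\varphi$ inside $h^{-1}(p)\ssq A$. This recovers in the present setting the Barge--Gillette type implication linking rational rotation numbers to periodic orbits; the converse is immediate, since a periodic point of $\varphi$ in $A$ has rational rotation number and $\rho$ is unique.

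\textbf{The irrational case.} If $\rho$ is irrational, Poincar\'e's theorem provides a monotone circle map $q\colon\kreis\to\kreis$ with $q\circ g=R_\rho\circ q$, where $R_\rho$ is rotation by $\rho$. Recalling that monotone maps between continua pull subcontinua back to subcontinua and that the fibres of $q$ are points or arcs, the composition $q\circ h\colon A\to\kreis$ has connected fibres, hence is monotone; and $(q\circ h)\circ\varphi|_A=q\circ g\circ h=R_\rho\circ(q\circ h)$, so $\varphi|_A$ is monotonically semiconjugate to $R_\rho$. Again the converse is immediate, as a monotone semiconjugacy onto an irrational rotation forces the (unique) rotation number to be that irrational number. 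Combining the two dichotomies with the uniqueness of $\rho$ gives the stated equivalences.

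\textbf{Main obstacle.} The crux of the proof — and the step I expect to be by far the most delicate — is the construction of $h$ and the verification of its properties: that the associated decomposition of $A$ is upper semicontinuous with quotient a genuine \emph{circle} rather than a more complicated circle-like continuum, that each decomposition element is inessential, and that the construction is canonical enough to be respected by every homeomorphism. This is exactly where decomposability is indispensable: for an indecomposable circloid such as the pseudocircle the composants are dense, no such $h$ exists, and indeed no semiconjugacy is available (Handel--Herman). A natural route is through the prime end compactifications of the two complementary domains $U^\pm$ of $A$: each carries an induced circle homeomorphism with a prime end rotation number, decomposability should allow one to identify the relevant principal continua with the elements of an intrinsic decomposition of $A$ and to show that the two prime end rotation numbers coincide, and $h$ is then the quotient map onto the resulting circle.
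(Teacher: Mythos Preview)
Your strategy inverts the logical dependency that the paper actually establishes. You want to construct the universal factor map $h\colon A\to\kreis$ as a \emph{purely topological} consequence of decomposability and then read off everything from classical Poincar\'e theory applied to the induced circle map. But decomposability of $\bd A$ by itself does not guarantee a monotone surjection onto the circle. Kuratowski's characterisation (see Section~\ref{Kuratowski}) says that a cobasin boundary admits such a map if and only if it is \emph{non-monostratic}, and a decomposable cobasin boundary may well be monostratic --- think of one assembled from finitely many indecomposable pieces, which then form a single fundamental layer. This is why the paper's own statement of the universal factor map (Theorem~\ref{t.universal_factor_map}) carries the extra hypothesis that some homeomorphism preserves $A$ \emph{without periodic points}, and why its proof invokes Theorem~\ref{t.poincare} to obtain the uncountable family of cuts needed for the quotient to be a circle (Lemma~\ref{l.projected_decomposition}). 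The dependency runs Theorem~\ref{t.poincare} $\Rightarrow$ Theorem~\ref{t.universal_factor_map}, not the reverse; as written, your route is circular.

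What the paper does instead is work directly with minimal generators of the lifted boundary $\cB$. Decomposability yields a minimal generator $G_0$ with $G_0\cap T^kG_0\neq\emptyset$ only for $|k|\le 1$ (Lemma~\ref{l.xy}); the images $\Phi^n(G_0)$ are again minimal generators, and since any minimal generator sits inside two adjacent $T$-translates of any other (Corollary~\ref{c.generator_ends}), one gets $\diam\Phi^n(G_0)\le 2\diam G_0$ for all $n$, whence uniqueness of the rotation number. The rational case then goes via Cartwright--Littlewood applied to the filled hull of $\overline{\bigcup_n\Phi^n(G_0)}$ --- morally your argument, but with the generator playing the role of the fibre of an $h$ that is not yet available. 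The crux is the irrational case: here irrationality of $\rho$ is precisely what manufactures a \emph{dense} ordered family of cuts $C_x=\Phi^kT^n(G_0\cap T^{-1}G_0)$, indexed by $x=n+k\rho\in\Z+\rho\Z$; the map $x\mapsto C_x$ is order-preserving (Lemma~\ref{l.generator_monotonicity}), and the semiconjugacy is defined directly as $H(z)=\sup\{x\in\Z+\rho\Z: C_x\prec z\}$. In short, the dynamics supplies the circle's worth of cuts that bare decomposability cannot, and your outline assumes away exactly this step. The prime-end route you sketch at the end does not obviously circumvent the difficulty either: matching the two prime-end circles with an intrinsic decomposition of $A$, and showing the two prime-end rotation numbers agree, already requires control of the kind the generator argument provides.
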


Barge and Gillette showed that the rotation number on a decomposable
cofrontier is always unique, and it is rational if and only if there
exists a periodic orbit in the cofrontier
\cite{BargeGillette1991CofrontierRotationSets}.
Theorem~\ref{t.poincare} complements these results to give a direct
analogue to the Poincar\'e Classification Theorem for decomposable
cofrontiers and, more generally, to circloids with decomposable
boundary. Thus, dynamics with irrational rotation number on an
invariant circloid with decomposable boundary are `linearisable'.  

As it further turns out, to a great extent this linearisation does not
depend on the dynamics. More precisely, there exists a `universal
factor map' which maps the circloid to a topological circle and
semiconjugates the dynamics of {\em any} homeomorphism preserving the
circloid to that of a circle homeomorphism.

\begin{thm} \label{t.universal_factor_map} Suppose that $A\ssq \A$ is
  an essential circloid with decomposable boundary and there exists a
  self-homeomorphism of $\A$ leaving $A$ invariant without periodic
  points in $A$. Then there exists a continuous and onto map
  $\Pi:\A\to\A$ with the following properties.  \romanlist
  \item $\Pi$ is monotone and homotopic to the identity;
  \item $\Pi$ sends $A$ to $\T=\kreis\times\{0\}$;
  \item $\Pi$ is injective on $\A\smin A$;
  \item for any homeomorphism $\varphi\colon\A\to \A$ such that $\varphi(A)=A$ there exists a homeomorphism $\tilde \varphi\colon
    \A\to \A$ such that $\tilde \varphi(\T)=\T$ and $\Pi\circ \varphi
    = \tilde\varphi\circ\Pi$.
  \item If $h\colon A\to \kreis$ is any monotone surjection, then
    there exists a monotone map $\til{h}\colon \T \to \kreis$ such
    that $h=\tilde h\circ \Pi$. In particular, if $\varphi\colon \A\to \A$ is a
    homeomorphism leaving $A$ invariant and $h$ semiconjugates
    $\varphi|_{A}$ to an irrational rotation $R_\rho$, then $\tilde h$
    semiconjugates $\tilde \varphi|_{\T}$ to $R_\rho$ (where $\tilde \varphi$ is as in the previous item).  \listend
\end{thm}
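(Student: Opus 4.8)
The plan is to realize $\Pi$, over $A$, as the \emph{finest monotone circle factor} of $A$, and then to extend it by the identity over $\A\smin A$. Let $\cM$ be the set of all monotone surjections $h\colon A\to\kreis$. Note first that $\cM\ne\emptyset$: the hypothesis furnishes a homeomorphism $\psi$ of $\A$ with $\psi(A)=A$ and no periodic points in $A$; replacing $\psi$ by $\psi^2$ if necessary (which does not create periodic points in $A$) we may assume $\psi$ is homotopic to the identity, so by Theorem~\ref{t.poincare} its rotation number $\rho$ is irrational and $\psi|_A$ is monotonically semiconjugate to $R_\rho$ by some $h_\psi\in\cM$. Note also that every $h\in\cM$ is \emph{essential}: if $h$ lifted to $\hat h\colon A\to\R$, then, as $\hat h(A)$ is an interval of length $\ge1$, two of the sets $\hat h^{-1}(t+n)$ ($n\in\Z$) would be nonempty and disjoint, disconnecting the fibre $h^{-1}(\pi(t))$. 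Finally, the relation $\approx$ on $A$ given by $x\approx y$ iff $h(x)=h(y)$ for all $h\in\cM$ is closed in $A\times A$, so the decomposition $\cD_A=\{[x]:x\in A\}$ of $A$ into $\approx$-classes is upper semicontinuous.

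The heart of the matter --- and the only place where decomposability enters --- is the \textbf{structural lemma}: for any $h_1,h_2\in\cM$ there is an $h_3\in\cM$ refining both (every $h_3$-fibre is contained in a fibre of $h_1$ and of $h_2$); equivalently, the monotone factor of $(h_1,h_2)\colon A\to\kreis\times\kreis$ --- the quotient of $A$ by the connected components of the $(h_1,h_2)$-fibres --- is again a topological circle. I would prove this by exploiting the topology of a decomposable circloid: from a splitting $\bd A=C_1\cup C_2$ into proper subcontinua, iterated and combined with the annular structure, one produces for every $\eps>0$ a finite cyclically ordered cover of $A$ by subcontinua of mesh $<\eps$ whose nerve is a circle, and hence the monotone maps onto $\kreis$; since all such chains are compatible with the cyclic order on $A$ inherited from its embedding in $\A$, those underlying $h_1$ and $h_2$ admit a common circular refinement. (For circloids with indecomposable boundary, such as the pseudocircle, the conclusion of the theorem fails; consistently, $\cM$ can then be empty.) Granting the lemma, $[x]=\bigcap_{h\in\cM}h^{-1}(h(x))$ is a directed intersection of continua, hence a continuum, so $\cD_A$ is a monotone upper semicontinuous decomposition; moreover $A/\!\approx$ is the inverse limit of the circles $h(A)$ along the (automatically monotone, degree-one) bonding maps, and such an inverse limit is again a circle. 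Identifying $A/\!\approx$ with $\kreis$, the quotient map $\Pi_A\colon A\to\kreis$ is the minimum of $\cM$: any $h\in\cM$ is constant on $\approx$-classes, hence $h=\til h\circ\Pi_A$ for a monotone $\til h\colon\kreis\to\kreis$ (its fibres $\Pi_A(h^{-1}(s))$ being continua). This is property (v) at the level of $A$.

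To pass to $\A$, take the decomposition $\cG=\cD_A\cup\{\{p\}:p\in\A\smin A\}$. It is upper semicontinuous --- points of $\A\smin A$ have neighbourhoods disjoint from $A$ --- and each nondegenerate element $K=[x]$ is a subcontinuum of $A$ that is inessential (a fibre of the essential map $\Pi_A$) and non-separating in $\A$: $A\smin K=\Pi_A^{-1}(\kreis\smin\{s\})$ is connected and accumulates on the frontier of each complementary annulus of $A$, so $\A\smin K$ is connected; hence $K$ is cellular in $\A$. By Moore's theorem $\A/\cG$ is again a copy of $\A$; let $\Pi\colon\A\to\A/\cG\cong\A$ be the quotient map. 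Then $\Pi$ is monotone, onto, injective on $\A\smin A$, and sends $A$ to a topological circle which we take to be $\T=\kreis\times\{0\}$, giving (i)--(iii); the homotopy statement holds because $\A$ is homotopy equivalent to $\kreis$ and $\Pi$ is an orientation- and end-preserving degree-one map.

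Finally, for (iv): if $\varphi\colon\A\to\A$ is a homeomorphism with $\varphi(A)=A$, then $h\mapsto h\circ\varphi|_A$ is a bijection of $\cM$ onto itself, so $\varphi$ preserves $\approx$ and permutes the elements of $\cG$; it therefore descends to a continuous bijection $\til\varphi$ of $\A/\cG\cong\A$, whose inverse is the descent of $\varphi^{-1}$, so $\til\varphi$ is a homeomorphism with $\til\varphi(\T)=\Pi(\varphi(A))=\Pi(A)=\T$ and $\Pi\circ\varphi=\til\varphi\circ\Pi$. For the last part of (v), if $h\colon A\to\kreis$ is a monotone surjection semiconjugating $\varphi|_A$ to $R_\rho$, then $h\in\cM$ gives $h=\til h\circ\Pi|_A$ with $\til h\colon\T\to\kreis$ monotone, and on $A$ we have $\til h\circ\til\varphi\circ\Pi|_A=\til h\circ\Pi|_A\circ\varphi=h\circ\varphi=R_\rho\circ h=R_\rho\circ\til h\circ\Pi|_A$, so the surjectivity of $\Pi|_A$ onto $\T$ gives $\til h\circ\til\varphi|_\T=R_\rho\circ\til h$. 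The only genuinely hard step is the structural lemma; everything after it is bookkeeping with monotone upper semicontinuous decompositions and Moore's theorem. \solidqed
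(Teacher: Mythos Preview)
Your overall strategy---define the decomposition of $A$ as the common refinement of \emph{all} monotone circle factors, show it is itself a monotone circle factor, extend by singletons and apply Moore---is sound, and it is essentially Kuratowski's approach, which the paper discusses in Section~\ref{Kuratowski}. The paper proceeds differently: it builds the decomposition explicitly from the order structure of \emph{cuts} (intersections $G\cap T^{-1}G$ of minimal generators in the lift), defines the fibre through $z$ as the nested intersection $\bigcap_{C^-\llcurly z\llcurly C^+}[C^-,C^+]_\cB$, proves directly that these fibres form a Moore decomposition whose quotient is a circle (Proposition~\ref{p.circle-image}), and only afterwards checks (Lemma~\ref{l.core}) that every $h\in\cM$ is constant on fibres---which is exactly your structural lemma, obtained as a corollary rather than a starting point. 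Your route has the appeal that item~(iv) falls out immediately from the observation that $h\mapsto h\circ\varphi|_A$ is a bijection of $\cM$; the paper instead verifies that any $\varphi$ permutes cuts and hence fibres.

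The genuine gap is the structural lemma itself, which you correctly flag as ``the only hard step'' but do not prove. Your sketch cannot work as written: you propose to produce, for every $\eps>0$, a cyclically ordered cover of $A$ by subcontinua of mesh $<\eps$ whose nerve is a circle. But already for the closed annulus $\kreis\times[0,1]$ (a circloid with decomposable boundary) no such cover exists once $\eps<1$: any subcontinuum of diameter $<1$ fails to span the full height, so a cover by such sets must have at least two ``rows'' and its nerve cannot be a simple cycle. The paper's own example in Section~\ref{AnosovKatokExample} is a cofrontier in which every fibre of the universal factor map has diameter $\ge 1/4$, so the same obstruction persists even in the empty-interior case. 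The phrase ``the cyclic order on $A$ inherited from its embedding'' is also doing unearned work: there is no canonical cyclic order on a circloid, and producing one is tantamount to producing the map $\Pi$. What actually replaces your $\eps$-chains is the paper's order $\prec$ on cuts and its refinement $\llcurly$ (requiring uncountably many intermediate cuts), together with Lemma~\ref{l.intermediate_cuts}; this is where decomposability is genuinely used, and it is not a one-line argument. A second, smaller gap: the assertion that an inverse limit of circles along monotone degree-one surjections is again a circle is true but not immediate---you need either Brown's theorem on near-homeomorphisms or an ad hoc argument, and you should at least name the tool.
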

We give a short self-contained proof in
Section~\ref{UniversalFactorMap}. It turns out, however, that the
family of subcontinua of $A$ given by the fibres of $\Pi$ coincides
with a decomposition of the circloid constructed already by
Kuratowski, in a purely topological context
\cite{kura-frontier}. We discuss this in
Section~\ref{Kuratowski}.

It is well-known that the semiconjugacies in the Poincar\'e
Classification Theorem are unique up to post-composition by a
rotation. As an immediate consequence of
Theorem~\ref{t.universal_factor_map}, we obtain the same statement for
decomposable circloids.
  \begin{cor} \label{c.uniqueness} The semiconjugacy in Theorem~\ref{t.poincare}
    is unique up to post-composition by a rotation.
\end{cor}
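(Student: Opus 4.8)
The plan is to deduce the uniqueness directly from Theorem~\ref{t.universal_factor_map}, together with the classical fact that the monotone semiconjugacy of a circle homeomorphism to an irrational rotation is unique up to post-composition by a rotation. Only the irrational case needs to be considered, since for rational $\rho$ Theorem~\ref{t.poincare} does not assert the existence of a semiconjugacy. So let $\varphi$ and $A$ be as in Theorem~\ref{t.poincare} with $\rho$ irrational. Then $\varphi$ has no periodic point in $A$ by Theorem~\ref{t.poincare}, so $\varphi$ itself satisfies the hypothesis of Theorem~\ref{t.universal_factor_map}. Let $\Pi\colon\A\to\A$ be the associated universal factor map, and let $\tilde\varphi\colon\A\to\A$ be the homeomorphism from item (iv); it is uniquely determined by the relation $\Pi\circ\varphi=\tilde\varphi\circ\Pi$ since $\Pi$ is onto, and it satisfies $\tilde\varphi(\T)=\T$.

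Now suppose $h_1,h_2\colon A\to\kreis$ are monotone semiconjugacies from $\varphi|_A$ to $R_\rho$. Each $h_i$ is surjective, since $h_i(A)$ is a nonempty closed $R_\rho$-invariant subset of $\kreis$ and the irrational rotation is minimal. Applying item (v) of Theorem~\ref{t.universal_factor_map} to $h_1$ and to $h_2$ we obtain monotone maps $\tilde h_1,\tilde h_2\colon\T\to\kreis$ with $h_i=\tilde h_i\circ\Pi|_A$, each semiconjugating one and the same map $\tilde\varphi|_\T$ to $R_\rho$. Identifying $\T=\kreis\times\{0\}$ with $\kreis$ in the obvious way, $\tilde\varphi|_\T$ becomes a circle homeomorphism; being semiconjugate via the monotone surjection $\tilde h_i$ to the fixed-point-free map $R_\rho$, it has no fixed point, hence preserves orientation and has rotation number $\rho$. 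Thus $\tilde h_1$ and $\tilde h_2$ are two monotone (degree one) semiconjugacies from the same orientation-preserving circle homeomorphism of irrational rotation number to $R_\rho$. The classical Poincar\'e Classification Theorem now gives $\tilde h_2=R_c\circ\tilde h_1$ for some $c\in\kreis$, and precomposing with $\Pi|_A$ yields $h_2=R_c\circ\tilde h_1\circ\Pi|_A=R_c\circ h_1$, as claimed.

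The argument is essentially formal once Theorem~\ref{t.universal_factor_map} is available, the key point being that both semiconjugacies factor through the \emph{same} rigid map $\Pi$, so the problem reduces to the already-known circle case. The one step deserving a moment's attention is that $\tilde\varphi|_\T$ must be recognised as an orientation-preserving circle homeomorphism before the classical theory can be invoked; as indicated, this is immediate from the existence of a monotone surjective semiconjugacy onto the fixed-point-free rotation $R_\rho$. I do not anticipate any genuine obstacle beyond this.
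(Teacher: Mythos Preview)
Your proof is correct and follows essentially the same approach as the paper: factor both semiconjugacies through the universal map $\Pi$ via Theorem~\ref{t.universal_factor_map}(v), then invoke the classical uniqueness for circle homeomorphisms. The paper normalises first so that $h_1(z_0)=h_2(z_0)$ and concludes $\tilde h_1=\tilde h_2$, whereas you keep the rotation constant explicit; you also spell out surjectivity and the orientation-preserving property of $\tilde\varphi|_\T$, which the paper leaves implicit, but these are minor presentational differences rather than a different argument.
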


It is worth mentioning that we do not know if an indecomposable cofrontier which is invariant by a homeomorphism may be semi-conjugate to an irrational rotation (however, if such an example exists the semiconjugacy cannot be monotone).

As should be expected, additional information on the topological
structure of the circloid yields further information on the dynamics.
We concentrate on the relation between the existence of biaccessible
points and almost automorphic dynamics, whose study is a classical
topic in abstract topological dynamics
\cite{veech1965almost,ellis1969lectures,auslander1988minimal}. A
homeomorphism $\varphi\colon X\to X$ is almost automorphic if it is
semiconjugate to some almost periodic homeomorphism of a space $Y$ in
a way that the set of points of $Y$ with a unique preimage under the
semiconjugation is dense in $Y$.  The following statement shows how
sets of this type appear in surfaces. A point of an essential circloid
$A\subset \A$ is called \emph{biaccessible} if it is the unique
intersection point of some arc $\sigma$ with $A$ such that $\sigma$
intersects both components of $\A\sm A$. An essential cobasin boundary $B$ is the boundary of an essential circloid, $B=\bd A$, and if $A$ has a biaccessible point $x$ belonging to $B$, we  also say that $x$ is a biaccessible
point of $B$.

\begin{thm} \label{t.almost_automorphy} If $B$ is an essential cobasin
  boundary in $\A$ invariant by a homeomorphism $\varphi\colon \A\to
  \A$ without periodic points and there is a biaccessible recurrent point in $B$, then
  $\varphi|_B$ is almost automorphic.
\end{thm}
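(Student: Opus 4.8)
The plan is to realise $\varphi|_B$ as a semiconjugacy onto the irrational rotation $(\kreis,R_\rho)$ by restricting the Poincar\'e semiconjugacy of Theorem~\ref{t.poincare} to $B$, and then to show that the fibres over the orbit of the biaccessible recurrent point are singletons; since that orbit is dense in $\kreis$ and $R_\rho$ is equicontinuous (in particular almost periodic), this is exactly the definition of almost automorphy. To set this up I would first observe that the existence of a biaccessible point in $B$ forces $B=\bd A$ to be decomposable: cutting $\A$ along an essential line through the biaccessible point that meets $A$ only there, one sees the relevant continuum split into two proper subcontinua, the recurrence of the point being used to rule out the degenerate configurations of this cut. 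Hence $A$ is a circloid with decomposable boundary and Theorems~\ref{t.poincare} and~\ref{t.universal_factor_map} apply: since $\varphi$ has no periodic point in $A$, the common rotation number $\rho$ is irrational and there is a monotone semiconjugacy $h\colon A\to\kreis$ with $h\circ\varphi|_A=R_\rho\circ h$, which by Theorem~\ref{t.universal_factor_map} factors as $h=\tilde h\circ\Pi$ with $\Pi$ the universal factor map and $\tilde h\colon\T\to\kreis$ a monotone semiconjugacy of the circle homeomorphism $\tilde\varphi|_\T$ to $R_\rho$.

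Second, I would check that $h|_B\colon B\to\kreis$ is still onto, so that it is a genuine semiconjugacy from $\varphi|_B$ onto $(\kreis,R_\rho)$; it suffices to see $\Pi(B)=\T$. Since $\Pi$ is a proper monotone map homotopic to the identity with $\Pi(A)=\T$, it restricts to a homeomorphism from each essential complementary component $U^\pm$ of $A$ onto the corresponding one of $\T$. Fix $t\in\kreis$: the set $\Pi^{-1}(\{t\}\times(0,\infty))$ is connected by monotonicity of $\Pi$ and contained in $U^+$, and as the second coordinate of its $\Pi$-image tends to $0$ it accumulates, by properness of $\Pi$, onto a point of $\overline{U^+}\cap A=\bd U^+\subseteq\bd A=B$ which is mapped by $\Pi$ to $(t,0)$. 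Thus $\Pi(B)=\T$ and $h(B)=\tilde h(\T)=\kreis$, and it remains to produce a dense set of $t\in\kreis$ for which $(h|_B)^{-1}(t)$ is a single point.

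The crucial and most delicate step is: \emph{if $x\in B$ is biaccessible and recurrent, then $(h|_B)^{-1}(h(x))=\{x\}$.} The natural approach is via the prime-end picture of $\Pi$: because $\Pi$ restricts to a homeomorphism from $U^\pm$ onto a collar of $\T$, it induces a homeomorphism of the prime-end circle of $U^\pm$ onto $\T$ conjugating the induced prime-end map to $\tilde\varphi|_\T$, and it carries the impression of any prime end into a single point of $\T$, hence into a single fibre of $\Pi$. Accessibility of $x$ from $U^\pm$ realises $x$ as a principal point of a prime end on that side; the content to be extracted from biaccessibility — through the arc $\sigma$ with $\sigma\cap A=\{x\}$, which provides a crosscut structure pinching down to $x$ — is that these prime ends have impression exactly $\{x\}$, so that $\Pi^{-1}(\Pi(x))=\{x\}$, and that $\Pi(x)$ is not an endpoint of any interval collapsed by $\tilde h$; recurrence of $x$, which places $\Pi(x)$ in the minimal set of $\tilde\varphi|_\T$, rules out $\Pi(x)$ being interior to such an interval. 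Together these give $\tilde h^{-1}(\{\tilde h(\Pi x)\})=\{\Pi x\}$, whence $(h|_B)^{-1}(h(x))=\Pi^{-1}(\tilde h^{-1}(\{\tilde h(\Pi x)\}))\cap B=\Pi^{-1}(\Pi x)\cap B=\{x\}$. The conclusion then follows: $\varphi$ preserves both biaccessibility and recurrence, so $(h|_B)^{-1}(h(\varphi^n x))=\{\varphi^n x\}$ for all $n\in\Z$, and $\{h(\varphi^n x):n\in\Z\}=\{R_\rho^n(h(x)):n\in\Z\}$ is dense in $\kreis$ because $R_\rho$ is minimal; thus $h|_B$ is an almost one-to-one semiconjugacy from $\varphi|_B$ onto the equicontinuous system $(\kreis,R_\rho)$, i.e.\ $\varphi|_B$ is almost automorphic.

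I expect the main obstacle to be the geometric heart of the crucial step — turning the combinatorial notion of biaccessibility (an arc touching $A$ only at the point) into the statement that the prime-end impressions at $x$ from the two sides degenerate to $\{x\}$ (equivalently, that the Kuratowski fibre through a biaccessible point is trivial), and controlling the interaction with the ``Denjoy part'' $\tilde h$ via recurrence — together with the closely related preliminary fact that a biaccessible point forces decomposability of $B$. By contrast, the dynamical ingredients (restricting the semiconjugacy to $B$, propagating singleton fibres along a dense orbit, and reducing almost automorphy to a dense set of singleton fibres) are routine once this topological input is available.
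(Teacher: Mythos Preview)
Your overall strategy coincides with the paper's: show $B$ is decomposable, invoke the universal factor map $\Pi$ and the Poincar\'e semiconjugacy, prove that the fibre through the biaccessible recurrent point $x_0$ is a singleton, and then use minimality of $R_\rho$ to propagate this along a dense orbit. Two points deserve comment.

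\emph{Decomposability.} Your proposed argument---cutting along a single essential line meeting $A$ only at the biaccessible point---does not work as stated: removing one point from a continuum need not disconnect it (already for a circle it does not). The paper instead uses \emph{two} biaccessible points, namely $x_0$ and $x_1=\varphi(x_0)$, and an inessential simple closed curve $\gamma$ meeting $B$ exactly at $\{x_0,x_1\}$; the two components of $\A\setminus\gamma$ then yield a decomposition $B=B_0\cup B_1$ into proper subcontinua. This is the easy fix you were gesturing at with ``recurrence rules out degenerate configurations'', but the key input is really that $\varphi(x_0)\neq x_0$ gives a second biaccessible point.

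\emph{Singleton fibre.} Your prime-end approach is more delicate than necessary, and the step you flag as the obstacle---deducing that the impression of the prime end at $x_0$ is $\{x_0\}$---is genuinely nontrivial (mere accessibility does not force trivial impression). The paper bypasses prime ends entirely. It uses the biaccessibility arc $C$ with $C\cap A=\{x_0\}$ directly: since $\Pi$ is injective on $\A\setminus A$, the image $\tilde C=\Pi(C)$ is an arc meeting $\T$ only at $\tilde x_0=\Pi(x_0)$, and $\Pi^{-1}(\tilde C\setminus\{\tilde x_0\})=C\setminus\{x_0\}$; a short closure argument then gives $\Pi^{-1}(\tilde x_0)\subset C$, whence $\Pi^{-1}(\tilde x_0)\subset C\cap A=\{x_0\}$. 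The recurrence of $x_0$ is used exactly where you indicate, to place $\tilde x_0$ in the nonwandering set of $\tilde\varphi|_{\T}$ so that the circle semiconjugacy $\tilde h$ is injective there.

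Finally, surjectivity of $h|_B$ onto $\kreis$ needs no separate argument: $h(B)$ is a compact $R_\rho$-invariant subset of $\kreis$, hence all of $\kreis$ by minimality.
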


\begin{cor} \label{c.almost_automorphy} Let $\varphi\colon \A\to \A$ be
  a homeomorphism and $X$ is an essential $\varphi$-invariant
  continuum such that $\varphi|_X$ is minimal. If $X$ has a
  biaccessible point, then $X$ is a decomposable cofrontier and
  $\varphi|_X$ is almost automorphic.
\end{cor}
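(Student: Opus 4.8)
The plan is to pin down the topology of $X$, apply Theorem~\ref{t.almost_automorphy} for the dynamical conclusion, and then read off decomposability from a monotone semiconjugacy onto a rotation.

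First I would check that $X$ is an essential cofrontier. Being essential, $X$ is a non-degenerate continuum, so $\varphi|_X$ has no periodic points: a periodic orbit would be a finite non-empty closed $\varphi$-invariant set, hence all of $X$ by minimality, which is impossible for a non-degenerate continuum. Thus $X$ is a periodic point free essential invariant continuum, so by the structure theory for such continua, see \cite{koropecki2010aperiodic,jaeger:2010a}, it is an essential circloid. Moreover $\bd X$ is a closed $\varphi$-invariant subset of the minimal set $X$, hence empty or all of $X$; if it were empty, $X$ would be open and closed in $\A$ and hence equal to the non-compact space $\A$, a contradiction, so $\inter X=\emptyset$ and $X$ is an essential cofrontier. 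In particular $X=\bd X$ is an essential cobasin boundary.

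By minimality every point of $X$, in particular the biaccessible one, is recurrent, so Theorem~\ref{t.almost_automorphy} applies with $B=X$ and yields that $\varphi|_X$ is almost automorphic. For the decomposability statement I would extract from this step a \emph{monotone} semiconjugacy $h\colon X\to\kreis$ onto an irrational rotation $R_\rho$: the rotation number $\rho$ on $X$ is unique, since minimality excludes a non-degenerate rotation interval (cf.~\cite{KoroLeCalvezNassIrrPE}), and irrational, since there are no periodic points; and the circle factor of $X$ manufactured in the proof of Theorem~\ref{t.almost_automorphy} out of the biaccessible recurrent point provides such an $h$ — equivalently, that construction already shows $X$ is decomposable, after which Theorems~\ref{t.poincare} and~\ref{t.universal_factor_map} supply the monotone semiconjugacy.

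Given such an $h$, decomposability follows at once: choose a proper closed subarc $J\subsetneq\kreis$ with non-empty interior and set $J'=\ol{\kreis\sm J}$; then $h^{-1}(J)$ and $h^{-1}(J')$ are proper subcontinua of $X$ — preimages of subcontinua under a monotone surjection are subcontinua — with union $h^{-1}(J\cup J')=X$, so $X$ is decomposable. Since $X$ is an essential circloid with empty interior, it is a decomposable cofrontier, and combined with the previous step $\varphi|_X$ is almost automorphic, as claimed. The crux of the whole argument is the middle step, namely the \emph{monotonicity} of the semiconjugacy — equivalently, the exclusion of an indecomposable cofrontier carrying a minimal homeomorphism with a biaccessible point — because Theorems~\ref{t.poincare} and~\ref{t.universal_factor_map} presuppose a decomposable boundary and so cannot be quoted blindly here; this is precisely the work carried out inside the proof of Theorem~\ref{t.almost_automorphy}, where the biaccessible recurrent point is used to build the circle structure on $X$ before the Poincar\'e theory is brought to bear.
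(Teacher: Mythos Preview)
Your argument has a gap at the step where you conclude that $X$ is an essential circloid by citing \cite{koropecki2010aperiodic,jaeger:2010a}. The result in \cite{koropecki2010aperiodic} states that an aperiodic invariant continuum is annular \emph{for a non-wandering surface homeomorphism}, and here you only know that $\varphi|_X$ is minimal, not that $\varphi$ is non-wandering on all of $\A$; the decomposition in \cite{jaeger:2010a} likewise presupposes an annular continuum to begin with. So neither reference yields that $X$ is a circloid under the stated hypotheses.

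The paper avoids this detour entirely with a two-line argument from minimality alone: if $U^-,U^+$ are the unbounded complementary components of $X$, then $\bd U^-$ and $\bd U^+$ are nonempty closed $\varphi$-invariant subsets of $X$, hence each equals $X$; thus $X=\bd U^-\cap\bd U^+$, which makes $X$ a cofrontier, and Theorem~\ref{t.almost_automorphy} applies directly. This replaces your appeal to external structure theory and simultaneously gives the empty-interior conclusion you obtain separately.

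Your treatment of decomposability is also more circuitous than needed. You try to deduce it from a monotone semiconjugacy, but (as you yourself note) Theorems~\ref{t.poincare} and~\ref{t.universal_factor_map} already \emph{assume} decomposable boundary, so one cannot invoke them beforehand. In fact the proof of Theorem~\ref{t.almost_automorphy} establishes decomposability \emph{first}, directly from the biaccessible point: one takes an inessential simple closed curve through $x_0$ and $\varphi(x_0)$ meeting $B$ only at those two points, and the two pieces of $B$ it cuts out (with the two points adjoined) are proper subcontinua whose union is $B$. Only after that does the semiconjugacy machinery come into play. So the logical order is biaccessibility $\Rightarrow$ decomposable $\Rightarrow$ monotone semiconjugacy, not the reverse.
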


We note that if a planar homeomorphism is both almost periodic and minimal on an invariant continuum $X$, then $X$ is a simple closed curve \cite{brechner-et-al}.

In \cite{MMO}, examples are constructed, using the Anosov-Katok method, of decomposable cofrontiers arising as the boundary of Siegel disks, which admit a monotone surjection onto $\T^1$ for which there is a subset of point inverses homeomorphic to the product of a cantor set and an interval (in particular, uncountably many fibers are nontrivial). 
All examples of minimal decomposable cofrontiers that we found in the literature are, to our knowledge, almost automorphic; see for instance \cite{WalkerPrimeEnd,herman:1983}.
We close with  a construction that is also based on the Anosov-Katok method \cite{anosov/katok:1970} and demonstrates that this is not necessarily the case: \emph{all} fibres may be non-trivial.

\begin{thm} \label{t.example} There exists a $C^\infty$ diffeomorphism
  $\varphi\colon \A\to \A$ leaving invariant a decomposable cofrontier
  $A$ such that \romanlist
  \item the rotation number on $A$ is irrational; 
  \item the dynamics on $A$ are minimal;
  \item all the fibres of points of $\T^1$ of the semiconjugacy given by Theorem \ref{t.universal_factor_map} are non-trivial continua (\ie not a single point). \listend
\end{thm}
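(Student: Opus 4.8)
The plan is to run an Anosov--Katok scheme in which we approximate a target diffeomorphism as a limit of conjugates of rational rotations, but where at each stage we deliberately make the approximating periodic disk structure ``thicker'' in a controlled way, so that the resulting minimal invariant set is a cofrontier all of whose Kuratowski fibres (equivalently, the fibres of the universal factor map $\Pi$ from Theorem~\ref{t.universal_factor_map}) are nondegenerate continua. Concretely, I would fix a Liouville irrational $\rho=\lim p_n/q_n$ and construct $\varphi$ as a $C^\infty$ limit $\varphi=\lim_n h_n R_{p_n/q_n} h_n^{-1}$, with $h_n=h_{n-1}\circ g_n$ and $g_n$ a volume-preserving (or at least area-normalised) diffeomorphism of $\A$ commuting with $R_{p_{n-1}/q_{n-1}}$ and supported near a previously chosen invariant essential annulus. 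The standard Anosov--Katok lemmas guarantee that if $g_n$ is chosen first and then $q_n$ is taken large enough (Liouville condition), the sequence $\varphi_n=h_nR_{p_n/q_n}h_n^{-1}$ converges in $C^\infty$; minimality of the limit on a suitable invariant set is forced by the usual density argument, and irrationality of the rotation number is immediate from the construction since each $\varphi_n$ has rotation number $p_n/q_n\to\rho$ and the rotation number is continuous here because $\varphi$ preserves the circloid (cf.\ Theorem~\ref{t.poincare}).

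The new ingredient is the \emph{choice of the $g_n$ to build nontrivial fibres uniformly}. I would maintain at stage $n$ a finite collection of pairwise disjoint essential subannuli of a fixed reference annulus, permuted cyclically by $R_{p_n/q_n}$, whose union $U_n$ has the property that every point of $\kreis$ lies ``above'' some component of $U_n$ of definite horizontal width $\geq \delta$ for a fixed $\delta>0$ independent of $n$, while the \emph{vertical} (transverse) thickness of these annuli is allowed to shrink. The map $g_{n+1}$ should then push these annuli around so that, after conjugation, the horizontal widths of the image annuli still cover $\kreis$ with overlap $\geq\delta$, but the pieces get combinatorially mixed so that in the limit the intersection $\bigcap_n h_n(U_n)$ (or rather its closure / the nested intersection of the corresponding ``tube'' continua) produces, over each $\theta\in\kreis$, a nondegenerate continuum. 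One concrete way: arrange that $h_n(\partial U_n)$ oscillates in the transverse direction with amplitude bounded \emph{below} by $\delta$ on a $\delta$-dense set of base points, uniformly in $n$; passing to the limit, the cofrontier $A$ inherits, over a dense set of $\theta$, a vertical ``oscillation'' of size $\geq\delta$, hence the fibre of $\Pi$ over $\Pi(A)$-image of that column is a continuum of diameter $\geq\delta$. Since $\Pi$ collapses exactly the Kuratowski prime-end-type fibres and these are the maximal connected subsets of $A$ that do not separate the two complementary ends (Section~\ref{Kuratowski}), a uniform lower bound on the transverse oscillation of $A$ over a dense set of columns, combined with $\varphi$-invariance and minimality, upgrades to \emph{every} fibre being nontrivial: a single nontrivial fibre, dragged around by the minimal action, forces all of them to be nontrivial because the fibre structure is $\varphi$-equivariant (item (iv) of Theorem~\ref{t.universal_factor_map}) and $\Pi(A)=\T$ carries an irrational rotation, so the ``nontrivial fibre'' set is invariant, dense, and — being the continuous image under $\tilde\varphi$-orbits of a nonempty set — actually all of $\T$ once one argues it is also closed (diameters of fibres are upper semicontinuous along the nested construction).

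The main obstacle, I expect, is the simultaneous control of three competing requirements in the inductive step: (a) the $C^\infty$ smallness of $g_{n+1}$-induced perturbation of $\varphi_n$ (needed for convergence), which forces the newly introduced transverse features to be very flat; (b) the demand that the transverse \emph{oscillation amplitude} of $h_{n+1}(\partial U_{n+1})$ stay $\geq\delta$ over a $\delta$-dense set of base points, which is a lower bound that must \emph{not} degrade under the later conjugations $h_m$, $m>n+1$; and (c) minimality, which requires the annuli to get horizontally fine. The tension between (a) and (b) is the crux: a $C^\infty$-small perturbation can still produce a $C^0$-large displacement of a curve if it is applied on a very thin annulus and the curve is long and folded — this is exactly the mechanism Anosov--Katok exploits, and the same mechanism used by Handel/Herman. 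So the key lemma to prove is a quantitative ``thin annulus folding'' statement: given a thin essential annulus and $\delta>0$, there is a volume-preserving diffeomorphism supported in it, arbitrarily $C^\infty$-close to the identity after post-composition with the relevant rigid rotation, whose effect on a reference essential curve is to introduce transverse folds of amplitude $\geq\delta$ that survive all subsequent (uniformly bicontinuous, but not uniformly $C^1$-bounded) conjugations. Once this lemma is in hand, the rest is the now-standard bookkeeping of Anosov--Katok towers plus the translation, via Theorem~\ref{t.universal_factor_map} and the Kuratowski description in Section~\ref{Kuratowski}, of ``uniform transverse oscillation of $A$'' into ``all fibres of $\Pi$ are nondegenerate.''
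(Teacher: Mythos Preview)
Your framework is right (Anosov--Katok, conjugating rational rotations), and your observation that a single large fibre can be propagated by minimality plus upper semicontinuity of fibre diameters is valid in principle. But the proposal has two genuine gaps that the paper's construction resolves differently, and one point you do not address at all.

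\textbf{The ``survival of folds'' mechanism is not the right one.} You frame the key lemma as producing transverse folds that \emph{survive all subsequent conjugations}. This is not how the paper proceeds, and it is not clear it can be made to work: the later $h_m$ are supported in ever-thinner annuli but are wildly non-equicontinuous, so there is no reason a fold created at stage $n$ persists. The paper instead \emph{re-establishes} the lower diameter bound at every stage. Concretely, it maintains the inductive invariant $\diam\big(H_k(\{\theta\}\times[-\alpha_k,\alpha_k])\big)>1/4$ for \emph{all} $\theta$, and chooses $h_{k+1}=V\circ T$ as a two-step map: a twist $T$ that stretches each new vertical segment $\{\theta\}\times[-\alpha_{k+1},\alpha_{k+1}]$ so that its horizontal projection has length $>2/q_k$, followed by a vertical oscillation $V$ that makes the image $3\delta$-dense in a box containing a full \emph{old} vertical segment $\{\theta'\}\times[-\alpha_k,\alpha_k]$. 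Since $H_k$ already sends that old segment to something of diameter $>1/4$, and $\delta$ is chosen small relative to the modulus of continuity of $H_k$, the new segment under $H_{k+1}=H_k h_{k+1}$ also has diameter $>1/4$. No survival argument is needed.

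\textbf{You are missing the $C^0$ convergence of $H_n^{-1}$.} This is the second key idea, and it is what connects the geometric control above to the fibres of the universal factor map. The paper imposes the summable bound $d(\pi_1(h_k(z)),\pi_1(z))<3/q_{k-1}$ with $q_k>13^k$; combined with the shrinking vertical support $A_k$, this forces $H_n^{-1}\to\Pi$ in $C^0$, where $\Pi$ is automatically a monotone semiconjugacy to $R_\rho$. The fibre $\Pi^{-1}(\theta,0)$ then contains accumulation points of the sets $H_n(\{\theta\}\times[-\alpha_n,\alpha_n])$, which have diameter $>1/4$ for every $n$, so the fibre itself has diameter $\geq 1/4$. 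Your proposal never identifies the semiconjugacy concretely and instead tries to argue about ``transverse oscillation of $A$'' translating into fibre size via Section~\ref{Kuratowski}; without the $C^0$ limit of $H_n^{-1}$ this translation is not available.

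\textbf{Decomposability is not addressed.} The same horizontal-displacement bound does double duty: since $\sum 3/q_k<1/4$, one has $d(\pi_1(H_n(z)),\pi_1(z))<1/4$ for all $n$, so the $H_n$-images of the left and right half-annuli stay in arcs of $\T^1$ of length $<1$. Their Hausdorff limits give two proper subcontinua whose union is $\Lambda$. Your scheme has no analogue of this control, and without it the limit continuum could well be indecomposable (as in the Handel--Herman pseudocircle examples).
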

In fact, the fibres of points of $\T^1$ have a diameter uniformly bounded below by a positive constant (see Claim \ref{claim:ak-2}) and, although we do not give a formal proof, it can be seen from the
construction that all these fibres can be given a rich topological structure,
reminiscent of the Knaster Buckethandle continuum.  

We note that due to the nature of the construction, the rotation number of the example from Theorem \ref{t.example} is  Liouvillean. We do not know whether such an example exists with a Diophantine rotation number. Similar questions in the context of indecomposable cofrontiers have been raised by in \cite{brechner-et-al} (see also \cite{turpin}).

\bigskip

\noindent{\bf Acknowledgements.} The authors are grateful to the anonymous referee for the suggestions that helped improve this paper. This work was initiated during the
conference {\em `Surfaces in S\~ao Paulo'}, held in S\~ao Sebasti\~ao,
7-11 April 2014. TJ would like to thank the organisers and
participants for creating this unique opportunity. 
This cooperation was
supported by the Brasilian-European exchange program BREUDS (EU Marie
Curie Action, IRSES Scheme). TJ acknowledges support of the German
Research Council (Emmy Noether Grant Ja 1721/2-1). AK acknowledges
support of CNPq-Brasil and FAPERJ-Brasil.

\section{Notation and preliminaries} 

\label{Preliminaries} We denote
by $\A = \kreis\times \R$ the open annulus, and $\pi\colon \R^2\to \A$
its universal covering map, where $T:(x,y)\mapsto (x+1, y)$ is a
generator of the group of covering transformations.

A subset $A$ of the open annulus $\A=\kreis\times \R$ is called an
{\em essential annular continuum} if it is compact and connected and
its complement $\A\smin A$ consists of exactly two connected
components, both of which are unbounded. Note that in this situation
one of the components is unbounded above and bounded below, whereas
the other is bounded above and unbounded below, and both of them are
homeomorphic to $\A$. Moreover, $A$ is the decreasing intersection of
a sequence of closed annuli.  We call $C\ssq \cU$ an {\em essential
  circloid} if it is a minimal element with respect to inclusion
amongst essential annular continua. An essential circloid with empty
interior is called an {\em essential cofrontier}. The boundary of an
essential circloid is called an {\em essential cobasin boundary}. It
is the intersection of the boundaries of the two complementary
components of the circloid and a minimal element with respect to
inclusion amongst essential continua.  A subset $A$ of a surface $S$
is called {\em annular continuum (circloid/cofrontier/cobasin
  boundary)} if it has a neighbourhood $\cU$ homeomorphic to $\A$ such
that $A$ as a subset of $\cU$ is an essential annular continuum
(essential circloid/essential cofrontier/essential cobasin boundary)
in the above sense. Note that thus an annular continuum in $\A$ may be
non-essential, in which case it is contained in a closed topological
disk.  From now on, given any annular continuum, circloid, cobasin
boundary or cofrontier, we always identify its annular neighbourhood
$\cU$ with $\A$ and assume implicitly that the objects are essential
in $\A$.

A closed subset $\cA\ssq\R^2$ is called {\em horizontal}, if there
exists $M>0$ with $\cA\ssq\R\times[-M,M]$, and {\em horizontally
  separating} if $\R\times(-\infty,-M)$ and $\R\times(M,\infty)$ are
contained in different connected components of $\R^2\smin A$. It is
called a {\em horizontal strip} if it separates the plane into exactly
two connected components, one of them unbounded above and the other
unbounded below. A horizontal strip is called {\em minimal} if it does
not strictly contain a smaller horizontal strip. In this case, its
boundary is called a {\em horizontal coplane boundary} and equals the
intersection of the boundaries of the two complementary domains of the
strip. A horizontal coplane boundary is minimal amongst horizontally
separating sets.  If $A$ is an essential continuum, we call the set
$\cA=\pi^{-1}(A)$ its lift. We state the next observation as a lemma,
since it will be used repeatedly. Its proof is straightforward and
left to the reader.
\begin{lem}
  \label{l.minimal_strips} 
  The lift of an essential continuum $A$ is a minimal strip if and
  only if $A$ is a circloid, and the lift of an essential continuum
  $B$ is a coplane boundary if and only if $B$ is a cobasin boundary.
\end{lem}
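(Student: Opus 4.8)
The plan is to set up the inclusion‑preserving dictionary between essential continua in $\A$ and their lifts in $\R^2$, and then read off both equivalences from how minimal elements behave under this correspondence.

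\textbf{The basic dictionary.} Let $A\ssq\A$ be an essential continuum. Since $A$ is compact, $A\ssq\kreis\times[-M,M]$ for some $M>0$, so $\cA:=\pi^{-1}(A)$ is closed, $T$‑invariant and horizontal, and it is connected because $A$ is essential. I claim $A$ is an essential annular continuum if and only if $\cA$ is a horizontal strip. If $A$ is an essential annular continuum, write $\A\smin A=U_+\uplus U_-$ with $U_+$ unbounded above and $U_-$ unbounded below. By compactness of $A$ we have $\kreis\times(M,\infty)\ssq U_+$ (and dually below), whose core circle generates $\pi_1(\A)$; hence the inclusion $U_\pm\hookrightarrow\A$ is $\pi_1$‑surjective, so $\pi^{-1}(U_\pm)$ is connected. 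Consequently $\R^2\smin\cA=\pi^{-1}(U_+)\uplus\pi^{-1}(U_-)$ has exactly two components, one containing $\R\times(M,\infty)$ and the other $\R\times(-\infty,-M)$; thus $\cA$ is a horizontal strip. Conversely, if $\cA$ is a horizontal strip, its complementary domains $D_+,D_-$ are $T$‑invariant (as $\cA$ is, and $T$ preserves ``unbounded above/below''), so $\pi(D_+)$ and $\pi(D_-)$ are disjoint, open, connected and exhaust $\A\smin A$, whence $A$ is an essential annular continuum. The same argument, with ``horizontally separating set'' in place of ``horizontal strip'', shows that $\pi^{-1}(B)$ is horizontally separating precisely when the essential continuum $B$ separates the two ends of $\A$.

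\textbf{Transfer of minimality.} The assignment $A\mapsto\cA=\pi^{-1}(A)$ is visibly inclusion‑preserving, and by the dictionary it is an inclusion‑preserving bijection from essential annular continua in $\A$ onto $T$‑invariant horizontal strips in $\R^2$, with inverse $\cC\mapsto\pi(\cC)$ (well defined since a $T$‑invariant horizontal strip, being connected, is the lift of its image). Hence $A$ is a circloid --- minimal amongst essential annular continua --- if and only if $\cA$ is minimal \emph{amongst $T$‑invariant horizontal strips}. One implication of the lemma follows at once: if $\cA$ is a minimal horizontal strip then in particular it is minimal amongst $T$‑invariant ones, so $A$ is a circloid. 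For the converse it remains to upgrade minimality amongst $T$‑invariant strips to minimality amongst all horizontal strips.

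\textbf{Upgrading minimality --- the main obstacle.} Suppose $A$ is a circloid but $\cB\subsetneq\cA$ is a horizontal strip, with complementary domains $D_+$ (unbounded above) and $D_-$ (unbounded below). Set $\widehat D_\pm:=\bigcup_{n\in\Z}T^nD_\pm$; these are open, $T$‑invariant and connected, since every translate $T^nD_+$ contains the $T$‑invariant region $\R\times(M,\infty)$ (and dually below). As the complementary domains of $\cA$ are contained in $D_+$, resp.\ $D_-$, we get $\widehat D_+\cup\widehat D_-\supseteq\R^2\smin\cA$, so $\cE:=\R^2\smin(\widehat D_+\cup\widehat D_-)$ is a closed, $T$‑invariant, horizontal subset of $\cA$, and it is \emph{proper} because any point of $\cA\smin\cB$ lies in $\widehat D_+\cup\widehat D_-$. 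If $\widehat D_+\cap\widehat D_-=\emptyset$, then $\R^2\smin\cE=\widehat D_+\uplus\widehat D_-$ exhibits $\cE$ as a horizontal strip, contradicting minimality of $\cA$ amongst $T$‑invariant strips. The delicate case is when translates of $D_+$ and $D_-$ interleave, so that $\widehat D_+\cap\widehat D_-\neq\emptyset$; this is exactly the behaviour of a sub‑barrier invariant only under $T^k$ with $k>1$ --- for instance the graph of $x\mapsto\sin(\pi x)$ inside $\R\times[-1,1]$, which is not the lift of a circloid. Ruling this out when $\cA=\pi^{-1}(A)$ and $A$ is a circloid is the crux: one must show that an overlap point, which projects to a point of $\pi(D_+)\cap\pi(D_-)$, forces a proper essential annular subcontinuum of $A$, using that a circloid is ``irreducible'' in the sense that its boundary is a single cobasin boundary accessible from both complementary sides (so that $\partial U_+=\partial U_-$). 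I expect this plane‑topological analysis of how $\cB$ can wind relative to its $T$‑translates --- rather than the formal bookkeeping of the previous two steps --- to be the main difficulty.

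\textbf{The cobasin‑boundary statement.} This reduces to the first part. Since $\pi$ is a local homeomorphism, $\pi^{-1}(\partial A)=\partial\pi^{-1}(A)$ for every closed $A\ssq\A$. A cobasin boundary is by definition $\partial A$ for a circloid $A$, and a horizontal coplane boundary is $\partial\cS$ for a minimal horizontal strip $\cS$; by the first part the minimal horizontal strips are precisely the lifts of circloids, so the lifts of cobasin boundaries are precisely the coplane boundaries, and since $B\mapsto\pi^{-1}(B)$ is injective, $\pi^{-1}(B)$ is a coplane boundary if and only if $B$ is a cobasin boundary. One should also note here that a minimal horizontally separating set is automatically a continuum (an irreducible separator of two points of the plane is connected), which is what makes ``minimal amongst essential continua'' and ``minimal amongst horizontally separating sets'' agree with the corresponding notions in the plane.
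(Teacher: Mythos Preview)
Your proposal correctly sets up the dictionary and correctly isolates the only non-formal step, but it then stops precisely at that step: you explicitly leave the case $\widehat D_+\cap\widehat D_-\neq\emptyset$ unresolved and say you ``expect'' it to be the main difficulty. As written, this is a genuine gap --- the proof is incomplete.

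The missing ingredient is the defining feature of a circloid that you allude to but never actually use: for a circloid $A$ with complementary domains $U_+,U_-$ one has $\partial U_+=\partial U_-$, and hence in the lift $\partial W_+=\partial W_-=:\cB_A$. This single fact dispatches the problematic case directly, and in fact makes your detour through $T$-invariant strips unnecessary. Suppose $\cB\subseteq\cA$ is any horizontal strip with complementary domains $D_+,D_-$. Since $W_-\subseteq D_-$ and $D_+$ is open with $D_+\cap D_-=\emptyset$, no point of $\partial W_-$ can lie in $D_+$; but $\partial W_-=\partial W_+=\cB_A$, so $\cB_A\cap D_+=\emptyset$, and symmetrically $\cB_A\cap D_-=\emptyset$. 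Now $D_+$ is connected, disjoint from $\cB_A$, and contains $W_+$; since $W_+$ is a full connected component of $\R^2\smin\cB_A$ (its boundary is exactly $\cB_A$), we get $D_+\subseteq W_+$, hence $D_+=W_+$. Likewise $D_-=W_-$, so $\cB=\cA$. Thus $\cA$ admits no strictly smaller horizontal strip at all --- the interleaving scenario you worried about simply cannot occur when $A$ is a circloid. This is presumably why the paper calls the lemma ``straightforward'': once one invokes $\partial U_+=\partial U_-$, the argument is two lines.

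Two smaller remarks. First, even in your ``easy'' case $\widehat D_+\cap\widehat D_-=\emptyset$ you assert that $\cE$ is a horizontal strip; to feed this back into the dictionary with essential annular continua you would still need $\cE$ (or at least $\pi(\cE)$) to be connected, which you do not check. Second, your reduction of the cobasin-boundary statement to the circloid statement is fine once the first part is established, but note that you implicitly need a minimal strip with $T$-invariant boundary to be itself $T$-invariant; with the direct argument above this issue evaporates, since the minimal strip containing $\pi^{-1}(B)$ is forced to equal $\pi^{-1}(A)$ for the unique circloid $A$ with $\partial A=B$.
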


Let $\varphi\colon \A\to \A$ be a homeomorphism homotopic to the identity. Any such map lifts to a homeomorphism $\Phi\colon \R^2\to \R^2$ which commutes with the deck
transformation $T:(x,y)\mapsto (x+1,y)$. If $A$ is a compact invariant
subset of $\varphi$, the {\em rotation interval of $\Phi$ on $A$} is
defined as
\begin{equation}
  \label{e.rotation_interval}
  \rho_A(\Phi) \ = \ \left\{\rho\in\R\mid \exists z_i\in\pi^{-1}(A), n_i\nearrow \infty: 
    \iLim \pi_1\left(\Phi^{n_i}(z_i)-z_i\right)/n_i = \rho\right\} \ .
\end{equation}
When $\rho_A(\Phi)$ is reduced to a singleton $\{\rho\}$, we say
$\Phi$ has a unique rotation number $\rho$ in $A$. In this case,
$\lim_{n\to \infty} \pi_1(\Phi^n(z)-z)/n=\rho$ for all $z\in
\pi^{-1}(A)$. When this happens we say that $\varphi$ has a
well-defined rotation number $\rho(\varphi)=\rho+\Z\in \kreis$.

Given metric spaces $X,Y$, a continuous map $\varphi:X\to X$ is {\em
  semiconjugate} to $\psi:Y\to Y$ if there exists a continuous onto
map $h:X\to Y$ such that $h\circ \varphi=\psi\circ h$. In this
situation, we say $\psi$ is a {\em factor} of $\varphi$ and $h$ is a
{\em semiconjugacy} or {\em factor map}.  An important case is that of
{\em monotone} semiconjugacies.  A continuous map $h:X\to Y$ is called
{\em monotone} if all fibres $h^{-1}(\{y\}),\ y\in Y$, are connected.
A set $U\ssq X$ is called {\em saturated} with respect to $h:X\to Y$,
if $x\in U$ implies $h^{-1}(\{h(x)\})\ssq U$. If $h$ is continuous,
then it maps saturated open (closed) sets to open (closed) sets. As a
direct consequence, we have
\begin{lem} \label{l.monotone_preimage} Preimages of connected sets under
  surjective monotone maps are connected. In particular, preimages of
  decomposable sets are decomposable.
\end{lem}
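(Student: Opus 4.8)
The plan is to run the standard saturation argument, using only the fact recorded above that a continuous map sends saturated open sets to open sets. Let $h\colon X\to Y$ be a surjective monotone map and $C\ssq Y$ connected. First I would reduce to the case $C=Y$: replace $X$ by $h^{-1}(C)$ and $Y$ by $C$, noting that the restricted map $h\colon h^{-1}(C)\to C$ is again a continuous surjection with connected fibres. So the task becomes to show that $X$ is connected, given that $Y$ is. Suppose, to the contrary, that $X=A\sqcup B$ is a separation, with $A,B$ disjoint, open and nonempty. The decisive observation is that each fibre $h^{-1}(\{y\})$, $y\in Y$, is connected and contained in $X=A\cup B$, hence lies entirely in $A$ or entirely in $B$. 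Consequently $A$ and $B$ are saturated, and since they are disjoint their images are disjoint as well.

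Next I would apply the quoted property to the open saturated sets $A$ and $B$: their images $h(A)$ and $h(B)$ are then open in $Y$. They are nonempty (since $A$ and $B$ are), disjoint (by saturation), and cover $Y$ (since $h$ is onto), which contradicts the connectedness of $Y$. Hence $X=h^{-1}(C)$ is connected, which proves the first assertion.

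For the ``in particular'' clause, write a decomposable continuum $D$ as $D=D_1\cup D_2$ with $D_1,D_2$ proper nonempty subcontinua. Then $h^{-1}(D)=h^{-1}(D_1)\cup h^{-1}(D_2)$, and each $h^{-1}(D_i)$ is closed and, by the first part, connected; throughout I take for granted (as in every application of this lemma, where $h$ is proper or its domain is compact) that these preimages are compact, so that they are genuine subcontinua of $h^{-1}(D)$. Surjectivity of $h$ makes each $h^{-1}(D_i)$ nonempty, and if $y$ is a point of $D\smin D_i$ then any preimage of $y$ lies in $h^{-1}(D)\smin h^{-1}(D_i)$, so $h^{-1}(D_i)$ is a proper subset of $h^{-1}(D)$. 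Thus $h^{-1}(D)$ is a union of two proper nonempty subcontinua, \ie decomposable.

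I do not expect any serious obstacle: the whole content is the remark that a connected fibre cannot straddle a separation, and the one non-formal input (``saturated open sets have open images'') is already at hand. The only points that need a modicum of care are checking that the restricted map retains all the hypotheses and, for the second assertion, the implicit compactness of the preimage.
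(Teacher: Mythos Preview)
Your argument is correct and is precisely the ``direct consequence'' the paper alludes to: the lemma is stated without proof, immediately after the observation that continuous maps send saturated open (closed) sets to open (closed) sets, and your saturation argument is exactly how one cashes this out. Your caveat about compactness in the decomposability clause is appropriate and matches the paper's implicit standing assumptions.
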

A \emph{cellular continuum} in a surface $S$ is one of the form
$K=\bigcap_{n\in \N} D_n$ where each $D_n$ is a closed topological
disk and $D_{n+1}\subseteq \inter D_n$. This is equivalent to saying
that $K$ is a continuum and has a neighborhood homeomorphic to $\R^2$
in which $K$ is non-separating.

A partition $\cF$ of a metric space $X$ into compact subsets is called
an {\em upper semicontinuous decomposition} if for each open set
$U\subseteq X$, the union of all elements of $\cF$ contained in $U$ is
also open. A {\em Moore decomposition} of a surface $S$ is an upper
semicontinuous decomposition of $S$ into cellular continua.  The
following version of Moore's theorem is contained in \cite[Theorem
25.1]{daverman} (see also Theorem 13.4 in the same book). It says
essentially that the quotient space of a Moore decomposition is the
same surface $S$.

\begin{thm}\label{t.moore}
  Given any Moore decomposition $\cF$ of a surface $S$, there exists a map $\Pi:S\to S$
  which satisfies the following.  \romanlist
\item $\Pi$ is continuous and surjective;
\item $\Pi$ is homotopic to the identity (and preserves orientation if $S$ is orientable);
\item For all $z\in S$, we have $\Pi^{-1}(z)\in\cF$. 
 \listend
\end{thm}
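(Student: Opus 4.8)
\noindent\emph{Idea of the proof.}
Write $X=S/\cF$ for the quotient space and $q\colon S\to X$ for the projection. The plan is threefold: (1)~show that $\cF$ is \emph{shrinkable}; (2)~deduce from the standard theory of decomposition spaces that $q$ is a \emph{near-homeomorphism}, that is, a uniform limit of homeomorphisms $S\to X$ --- in particular $X$ is homeomorphic to $S$, hence a surface; (3)~assemble $\Pi$ from $q$ and a well-chosen homeomorphism $X\to S$ and verify the three listed properties. Throughout one uses that, $\cF$ being upper semicontinuous with compact elements, the map $q$ is closed and proper, so that $X$ is a locally compact, second countable, metrizable, locally connected space, and every neighbourhood of a point of $X$ contains a saturated open neighbourhood of the corresponding fibre.

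The geometric core of step~(1) --- and the part I expect to be the main obstacle --- is the squeezing property of cellular continua: if a cellular continuum $K$ lies in the interior of a closed disk $D$, then for every $\eps>0$ there is a homeomorphism of $D$ equal to the identity on $\bd D$ that maps $K$ into a set of diameter $<\eps$. This holds because $K$ is contained in a closed disk $D'\ssq\inter D$, and by the Schoenflies theorem together with the Alexander trick one can carry $D'$ onto an arbitrarily small disk by a homeomorphism of $D$ fixing $\bd D$; since $K\ssq D'$, its image is then small. To obtain shrinkability one must perform such squeezes on \emph{all} elements of $\cF$ simultaneously, by a single homeomorphism of $S$ supported in small neighbourhoods of these elements: by upper semicontinuity all but finitely many elements are already small, so one encloses the finitely many large elements in pairwise disjoint small chart-disks of the above kind, applies the corresponding squeeze in each, leaves the rest of $S$ fixed, and observes that by uniform continuity the small elements lying inside these disks stay small. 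A standard iteration with $\eps\to 0$ then yields homeomorphisms $g_m\colon S\to X$ with $g_m\to q$ uniformly; carrying out this scheme so that one genuinely obtains homeomorphisms converging to $q$ rather than merely maps, with the compatibility bookkeeping patched together over the whole surface, is precisely the delicate content of Moore's shrinkability argument --- its prototype being the planar statement that collapsing a monotone upper semicontinuous decomposition of $S^2$ into cellular continua yields $S^2$.

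Granting shrinkability, step~(2) is the general principle that a shrinkable decomposition has near-homeomorphic projection; in particular there is a homeomorphism $f\colon S\to X$ with $\sup_{z\in S}\dist_X\!\big(f(z),q(z)\big)$ as small as desired, so $X\cong S$. For step~(3) put $\Pi=f^{-1}\circ q\colon S\to S$. Since $f^{-1}$ is uniformly continuous and $f$ is uniformly close to $q$, the composition $\Pi$ is uniformly close to $f^{-1}\circ f=\id_S$; hence $\Pi$ is continuous and onto, is homotopic to the identity (sufficiently $C^0$-close self-maps of a surface are homotopic, the surface being an ANR), and is orientation-preserving when $S$ is orientable. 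Finally, for $z\in S$ one has $\Pi^{-1}(z)=q^{-1}(f(z))$, which is exactly the element of $\cF$ lying over $f(z)\in X$; as $z$ ranges over $S$ this exhausts $\cF$, which gives property~(iii) and finishes the proof.
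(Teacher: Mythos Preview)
The paper does not prove this theorem; it is quoted from Daverman's book \cite[Theorem~25.1]{daverman} (with a pointer also to Theorem~13.4 there), so there is no ``paper's own proof'' to compare against. Your sketch is precisely the standard route taken in that reference: verify Bing's shrinkability criterion for cellular upper semicontinuous decompositions via disk-squeezing and the Schoenflies/Alexander trick, conclude that the quotient map is a near-homeomorphism, and read off (i)--(iii) from $\Pi=f^{-1}\circ q$ with $f$ a homeomorphism close to $q$.

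One small caveat: your sentence ``by upper semicontinuity all but finitely many elements are already small'' is only valid on a \emph{compact} surface; on a non-compact $S$ (such as $\A$, the case actually used later in the paper) there may be infinitely many large decomposition elements. The fix is routine --- either exhaust $S$ by compacta and shrink in stages, or note that in every application in this paper the non-degenerate elements of $\cF$ lie in a compact set --- but as written that step is not quite correct in the stated generality. Apart from this, your outline is sound and matches the cited source.
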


The map $\Pi$ is called the {\em Moore projection} associated to $\cF$. 

Finally, we state some basic results from plane topology. We say that
a subset $K\subseteq X$ of a topological space separates two points if
the two points belong to different connected components of $X\sm
K$. 

\begin{lem}[{\cite[Theorem
    14.3]{Newman1992PlaneTopology}}]\label{l.component_separation} If
  two points in the plane are separated by a closed set, then they are
  also separated by some connected component of that set.
\end{lem}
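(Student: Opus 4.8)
Although the statement is quoted from \cite{Newman1992PlaneTopology}, let me sketch the proof I would give. The plan is to exhibit the required component of $K$ as the one containing a suitable connected piece of $K$ sitting ``between'' $a$ and $b$, produced by taking two successive frontiers. First, let $W$ be the connected component of $\R^2\sm K$ containing $a$. Since $K$ separates $a$ from $b$, the point $b$ lies in another component; components of open planar sets are open (and disjoint from $\ol W$), so $b\notin\ol W$, and since $W$ is a component of $\R^2\sm K$ we have $\bd W\subseteq K$. If $\bd W$ were connected we would be done at once, since $\bd W$ separates $W$ from $\R^2\sm\ol W$ (hence $a$ from $b$) and is a closed subset of $K$ avoiding $a$ and $b$, so the component of $K$ containing it would separate $a$ from $b$; however, $\bd W$ need not be connected (\ie $K$ need not be), so a second step is required.

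The idea of that step is to pass from the possibly disconnected $K$ to the \emph{connected} set $\ol W$: let $V$ be the component of $\R^2\sm\ol W$ containing $b$, and consider $\bd V$. Then $\bd V\subseteq\bd\ol W\subseteq\bd W\subseteq K$, so once we know $\bd V$ is connected it must lie in a single component $E$ of $K$. Moreover $\bd V$ (and hence $E$) separates $a$ from $b$: we have $b\in V$, while $a\in W$ misses $\ol V=V\cup\bd V$ (both $V\subseteq\R^2\sm\ol W$ and $\bd V\subseteq\bd W$ are disjoint from the open set $W$), so $\R^2\sm\bd V=(\R^2\sm\ol V)\sqcup V$ places $a$ and $b$ in distinct components; and $E\supseteq\bd V$ is closed and contains neither point, so it separates them as well. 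This reduces the lemma to a single topological fact.

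That fact --- that the frontier of a complementary domain $V$ of a plane continuum $\Lambda$ (here $\Lambda=\ol W$) is connected --- is where planarity genuinely enters; indeed the lemma itself fails on the circle, where a two-point set separates but neither point does. I would derive it from the unicoherence of $\R^2$, equivalently the Janiszewski property used throughout \cite{Newman1992PlaneTopology}: write $\R^2=\ol V\cup(\R^2\sm V)$; the set $\ol V$ is closed and connected, and $\R^2\sm V$ is closed and connected as well, since it equals $\Lambda\cup\bigcup_iV_i$ with the $V_i$ the remaining components of $\R^2\sm\Lambda$, each $\ol{V_i}=V_i\cup\bd V_i$ connected with $\bd V_i\subseteq\Lambda$, so that the connected set $\Lambda\cup\bigcup_i\ol{V_i}$ lies between $\R^2\sm V$ and its closure; unicoherence then yields that $\ol V\cap(\R^2\sm V)=\bd V$ is connected. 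I expect the only real obstacle to be setting up (or invoking) this unicoherence/Janiszewski input cleanly; everything else is routine point-set topology. (For compact $K$ there is a more hands-on route via a finite cover of $K$ by relatively clopen non-separating pieces together with the Janiszewski property, but the two-frontier argument has the advantage of treating non-compact $K$, such as lifts of annular continua, uniformly.)
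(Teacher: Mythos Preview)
The paper does not give its own proof of this lemma; it is stated as a preliminary and simply cited from Newman's textbook. So there is nothing to compare your argument against beyond the reference itself.

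That said, your double-frontier argument is correct and is essentially the classical proof. Taking the component $W$ of $a$ in $\R^2\sm K$, then the component $V$ of $b$ in $\R^2\sm\ol W$, and invoking unicoherence of the plane to conclude that $\bd V$ is connected (hence lies in a single component of $K$ which still separates $a$ from $b$) is exactly how Newman's Theorem~14.3 is obtained. Your verification that $\R^2\sm V$ is connected (via $\Lambda\cup\bigcup_i\ol{V_i}$ with each $\ol{V_i}$ meeting the connected $\Lambda=\ol W$) is clean, and you are right that the unicoherence form of the argument handles the non-compact case uniformly, which is relevant since the paper later applies the lemma inside simply connected open sets that need not be bounded.
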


\begin{lem}[{\cite[Theorem 2-28]{HockingYoung1961Topology}}]\label{l.curve-criterion} 
  In any metric space, a continuum $K$ is homeomorphic to a circle if
  for any pair $x\neq y$ of points of $K$, the set $K\sm \{x,y\}$ is
  disconnected.
\end{lem}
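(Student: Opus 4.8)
The plan is to deduce the lemma from the classical cut-point theory of metric continua; the only extra point that needs checking is that the stated hypothesis already forbids cut points. (Here $K$ is tacitly non-degenerate, the one-point case being excluded by convention, as it satisfies the hypothesis vacuously yet is not a circle.)

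First I would show that \emph{$K$ has no cut point}. Suppose not: pick $p\in K$ with $K\sm\{p\}=U\sqcup V$, a separation into non-empty sets open in $K\sm\{p\}$, hence open in $K$. A non-degenerate continuum has no isolated points, so $U$ and $V$ are infinite and $\ol U:=\closure_K(U)$, $\ol V:=\closure_K(V)$ are non-degenerate subcontinua. Since $U,V$ are separated in $K\sm\{p\}$ we get $\ol U\ssq U\cup\{p\}$ and $\ol V\ssq V\cup\{p\}$, and $p\in\ol U\cap\ol V$, since otherwise one of $U,V$ would be a proper clopen subset of $K$. Thus $\ol U\cup\ol V=K$ and $\ol U\cap\ol V=\{p\}$. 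A non-degenerate metric continuum has at least two non-cut points (classical; see \cite{HockingYoung1961Topology}), so I may pick a non-cut point $q\neq p$ of $\ol U$ and a non-cut point $r\neq p$ of $\ol V$; then $q\in U$, $r\in V$, so $q\neq r$. Now $\ol U\sm\{q\}$ and $\ol V\sm\{r\}$ are each connected and both contain $p$, so $K\sm\{q,r\}=(\ol U\sm\{q\})\cup(\ol V\sm\{r\})$ is connected, contradicting the hypothesis applied to the pair $\{q,r\}$.

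Consequently $K$ is a non-degenerate metric continuum that is separated by every pair of its points and by no single point; this is exactly the classical characterisation of the simple closed curve, i.e.\ Theorem 2-28 of \cite{HockingYoung1961Topology}, so $K$ is homeomorphic to $\Seins$. For completeness I would recall the structure of that proof: fix $a\neq b$; by a boundary-bumping argument together with the hypothesis, $K\sm\{a,b\}$ has exactly two components $P,Q$ with $\closure_K(P)=P\cup\{a,b\}$ and $\closure_K(Q)=Q\cup\{a,b\}$; one then shows that each of these closures has precisely the two non-cut points $a$ and $b$, hence is an arc from $a$ to $b$ by the arc-recognition theorem (a metric continuum with exactly two non-cut points is an arc); finally $K$ is a union of two arcs meeting exactly in their common pair of endpoints $\{a,b\}$, which is a simple closed curve.

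The main obstacle is this last step: controlling the component structure of $K\sm\{a,b\}$ without any local-connectedness assumption and, the real technical heart, showing that every interior point of $P$ (resp.\ $Q$) is a cut point of $\closure_K(P)$ (resp.\ $\closure_K(Q)$), so that arc-recognition applies. These are exactly the ingredients packaged in \cite[Theorem 2-28]{HockingYoung1961Topology}; in the write-up I would therefore carry out the no-cut-point step explicitly and cite that theorem for the rest. The no-cut-point step is needed because the lemma is phrased using only the pair-separation condition, whereas the classical formulation also (redundantly, as we just saw) assumes that no single point separates $K$.
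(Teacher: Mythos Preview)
The paper does not prove this lemma at all: it is stated with a bare citation to \cite[Theorem~2-28]{HockingYoung1961Topology} and then used as a black box in the proof of Proposition~\ref{p.circle-image}. There is therefore no ``paper's own proof'' to compare your attempt against.

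That said, your outline is a correct reconstruction of the textbook argument. The no-cut-point step is valid: once you note that $\ol U=U\cup\{p\}$ and $\ol V=V\cup\{p\}$ are genuine subcontinua (each component of $K\sm\{p\}$ accumulates on $p$ by boundary bumping, so the closures are connected even if $U$ or $V$ has several components), the choice of non-cut points $q\in\ol U\sm\{p\}$ and $r\in\ol V\sm\{p\}$ together with $K\sm\{q,r\}=(\ol U\sm\{q\})\cup(\ol V\sm\{r\})$ yields the contradiction. Your sketch of the remainder --- exactly two components of $K\sm\{a,b\}$, each closing up to an arc via the ``exactly two non-cut points'' characterisation --- is precisely the route taken in Hocking--Young. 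One minor point: Theorem~2-28 in Hocking--Young is already stated with \emph{only} the pair-separation hypothesis (matching the lemma verbatim), so the no-cut-point step is part of that theorem's proof rather than a separate reduction you need to supply; your write-up is not wrong, just slightly misattributes where the work happens.
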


\begin{lem}[{\cite[Theorem 2-16]{HockingYoung1961Topology}}]\label{l.toplemma} 
  If $X$ is a continuum and $Y\subseteq X$ is closed, then the closure
  of every connected component of $X\sm Y$ intersects $Y$.
\end{lem}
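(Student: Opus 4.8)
The statement is the boundary‑bumping lemma, and the natural plan is the classical argument for it, with attention to one subtlety: components of open subsets of a continuum need not be open (the continuum being possibly far from locally connected), so one cannot hope to separate the component $C$ from $Y$ directly and must instead route the argument through a compact set containing $C$. Assume, as we may, that $Y$ is a nonempty proper subset of $X$, let $C$ be a connected component of $X\sm Y$, and suppose towards a contradiction that $\overline C\cap Y=\emptyset$.

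The first step is the easy observation that $\overline C=C$: the set $\overline C$ is connected, contains $C$, and by assumption lies in $X\sm Y$, so maximality of the component $C$ forces $\overline C\subseteq C$. Thus $C$ is closed in the compact space $X$, hence a \emph{compact} connected set disjoint from the closed set $Y$. Next I would enclose $C$ in a compact region avoiding $Y$: since $\delta:=\dist(C,Y)>0$, the set $N:=\{x\in X:\dist(x,C)\le\delta/2\}$ is compact and disjoint from $Y$, and it contains $C$ inside the open set $U:=\{x\in X:\dist(x,C)<\delta/2\}$.

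The key step is to pin $C$ down inside $N$. As a connected subset of $N$, $C$ is contained in the component $C'$ of $N$ containing it; but $C'$ is a connected subset of $X\sm Y$ containing $C$, hence $C'\subseteq C$ by maximality, so $C'=C$. Therefore $C$ is a component of the \emph{compact} Hausdorff space $N$, and consequently it coincides with its quasicomponent there: $C=\bigcap\{A:A\text{ clopen in }N,\ C\subseteq A\}$. This is the point where compactness does the real work, compensating for the possible failure of local connectedness, and I expect it to be the only step requiring any thought.

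From here the argument is routine. The set $N\sm U$ is compact and disjoint from $C$, so the closed subsets $A\cap(N\sm U)$ of $N$ (over clopen $A\supseteq C$) have empty total intersection; by compactness finitely many suffice, i.e. $A_1\cap\dots\cap A_k\cap(N\sm U)=\emptyset$ for suitable $A_1,\dots,A_k$, and then $W:=A_1\cap\dots\cap A_k$ is clopen in $N$, contains $C$, and is contained in $U$. Being contained in the $X$‑open set $U$, $W$ is open in $X$; being closed in the $X$‑closed set $N$, it is also closed in $X$. Thus $W$ is a nonempty clopen subset of $X$, and it is proper since $W\subseteq N\subseteq X\sm Y$ with $Y$ nonempty. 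This contradicts the connectedness of $X$ and finishes the proof; the only non‑elementary ingredient invoked is the coincidence of components and quasicomponents in compact Hausdorff spaces.
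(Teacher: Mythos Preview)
Your argument is correct and is the standard proof of the boundary-bumping lemma. The paper does not supply its own proof of this statement; it is quoted from Hocking--Young as a preliminary fact, so there is nothing to compare against. One tiny quibble: the phrase ``as we may'' when reducing to the case of $Y$ nonempty and proper is slightly misleading, since the lemma as literally stated is false for $Y=\emptyset$ (then $X\setminus Y=X$ has the single component $X$, whose closure does not meet $\emptyset$); this is a defect of the statement rather than of your proof, and in the paper's applications $Y$ is always nonempty.
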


\section{Minimal generators}

Throughout this section, $B\ssq\A$ denotes a decomposable cobasin
boundary and $\cB=\pi^{-1}(B)$ its lift. If $G\ssq\cB$ is a continuum
such that $\cB =\bigcup_{n\in\Z} T^n(G)$, we say that $G$ is a
\emph{generator} of $\cB$. We say that $G$ is a {\em minimal
  generator} if it does not strictly contain a smaller generator. In
the same way we may define generators and minimal generators for lifts
of circloids. This concept has been used implicitly by Barge and
Gillette in \cite{BargeGillette1991CofrontierRotationSets}; the
terminology is taken from \cite{JaegerPasseggi2013THIrrational}. As a
consequence of Zorn's Lemma, any generator contains a minimal
generator. The aim of this section is to provide a number of basic
facts on minimal generators which will be crucial for the later
constructions. The main objective is to derive the statements for
circloids, but in order to do so first have to consider cobasin
boundaries.

\begin{lem}\label{l.gen} A continuum $G\subseteq \cB$ is a generator of $\cB$ if
  and only if $G\cap TG\neq \emptyset$.
\end{lem}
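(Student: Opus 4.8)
The forward implication is immediate: if $G$ is a generator, then $\cB=\bigcup_{n\in\Z}T^n(G)$ is connected (it is a coplane boundary by Lemma \ref{l.minimal_strips}), so the sets $T^n(G)$ cannot all be pairwise disjoint from their neighbours. More precisely, if $G\cap TG=\emptyset$, then one checks that $T^n(G)\cap T^m(G)=\emptyset$ whenever $|n-m|\ge 1$, using that $T$ is a homeomorphism commuting with the picture: for if $T^n(G)$ and $T^m(G)$ met with $m>n+1$, applying $T^{-n-1}$ would give $T^{-1}(G)\cap T^{m-n-1}(G)\neq\emptyset$ with $m-n-1\ge 1$, and iterating one reaches a contradiction with the strip being horizontal (bounded in the $y$-direction), since a chain of overlapping translates is needed to connect $G$ to all its $\Z$-translates. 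Hence $\cB$ would be a disjoint union of infinitely many non-empty compact sets, contradicting connectedness. So $G\cap TG\neq\emptyset$.

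For the converse, assume $G$ is a continuum with $G\subseteq\cB$ and $G\cap TG\neq\emptyset$. Set $G'=\bigcup_{n\in\Z}T^n(G)$. Since $G$ and $TG$ are continua with a common point, $G\cup TG$ is a continuum; translating and taking unions, $\bigcup_{|n|\le N}T^n(G)$ is a continuum for every $N$, and $G'$ is an increasing union of continua, hence connected. It is clearly $T$-invariant. The key point is then to show $G'=\cB$. First, $G'$ is horizontal: $G\subseteq\cB\subseteq\R\times[-M,M]$ for some $M$ (as $\cB$ is the lift of a compact set, it lies in a bounded strip), and $T$ preserves this strip, so $G'\subseteq\R\times[-M,M]$. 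Second, $G'$ is horizontally separating: since $\cB$ is a coplane boundary it separates $\R\times(-\infty,-M)$ from $\R\times(M,\infty)$, and one must argue $G'$ does so as well. This is where a minor amount of care is needed — the natural route is to observe that $\ol{G'}=G'$ (it is closed: a $T$-invariant subset of a bounded strip which is a union of translates of a compact set is closed, since in any bounded vertical window only finitely many translates are relevant) and that $G'$, being a $T$-invariant continuum in the bounded strip projecting onto an essential continuum $\pi(G')\subseteq B$ in $\A$, must be essential, hence separating; by minimality of $B$ as an essential continuum (Lemma \ref{l.minimal_strips} identifies $\cB$ with a coplane boundary, which is minimal among horizontally separating sets) we get $G'\supseteq\cB$, and since $G'\subseteq\cB$ by construction, $G'=\cB$.

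The main obstacle is the converse direction, and within it the step showing that $G'=\bigcup_{n\in\Z}T^n(G)$ is horizontally separating (equivalently that $\pi(G')$ is essential in $\A$). Connectedness and horizontality of $G'$ are routine, but separation does not follow formally from $G$ meeting its translate — one genuinely needs to invoke minimality of the cobasin boundary $B$ together with the fact that a $T$-invariant essential subcontinuum of the bounded strip $\pi^{-1}(B)$ cannot be a proper subset of $\cB$. I would handle this by passing to the quotient: $\pi(G')$ is a compact connected subset of $B$ which wraps around the annulus (it is not contained in a disk, since it is $T$-invariant upstairs and meets every fundamental domain's worth of the strip through the chain of overlapping translates), hence is an essential continuum contained in $B$; since $B$ is a cobasin boundary it is minimal among essential continua, forcing $\pi(G')=B$ and thus $G'=\cB$. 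The only subtlety to verify carefully is the "not contained in a disk" claim, which again reduces to the chain-of-translates argument already used in the forward direction.
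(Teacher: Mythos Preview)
Your ``converse'' direction ($G\cap TG\neq\emptyset\Rightarrow G$ is a generator) is correct and matches the paper's argument: the union $\bigcup_{n\in\Z}T^nG$ is closed, connected, $T$-invariant, and projects to an essential subcontinuum of $B$, hence equals $\cB$ by minimality.

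The gap is in your ``forward'' direction (generator $\Rightarrow G\cap TG\neq\emptyset$), which you treat as routine but is actually the subtle half. Your claim that $G\cap TG=\emptyset$ forces \emph{all} translates $T^nG$ to be pairwise disjoint is false in general and is not established by the sketch you give: nothing prevents $G\cap T^kG\neq\emptyset$ for some $k>1$ while $G\cap TG=\emptyset$, and the appeal to ``iterating one reaches a contradiction with the strip being horizontal'' does not make sense here---the translates are horizontal shifts, so boundedness in the $y$-direction plays no role. Without pairwise disjointness the disconnection argument collapses, since $\cB$ could still be connected via longer-range overlaps.

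The paper handles this with a short but genuinely different idea. Since $G$ is a generator and $\pi(G)=B$ is essential, $G$ must meet $T^kG$ for some $k>0$. Take $k$ \emph{maximal} with this property. Then $\bigcup_{i\in\Z}T^{ik}G$ is connected (consecutive terms overlap), hence horizontally separating, hence equal to $\cB$ by minimality; in particular it contains $TG$. By maximality of $k$, the connected set $TG$ is disjoint from $T^{ik}G$ for $i<0$ and $i>1$, so $TG\subseteq G\cup T^kG$. Finally, $TG\not\subseteq T^kG$ because iterating would give $G\subseteq T^{n(k-1)}G$ for all $n$, impossible for compact $G$ with $k>1$. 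Hence $TG\cap G\neq\emptyset$. You should replace your disconnection sketch with this maximal-$k$ argument; note also that you have misidentified which direction is the main obstacle.
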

\begin{proof}
  If $G\cap TG\neq \emptyset$, then $\bigcup_{k\in \Z} T^kG\subsetneq
  \cB$ is horizontally separating and by Lemma \ref{l.minimal_strips}
  it has to be equal to $\cB$, so $G$ is a generator.  To prove the
  converse, we first note that if $G$ is a generator then $G\cap
  T^kG\neq \emptyset$ for some $k>0$, since otherwise $G$ would
  project injectively onto $B\subseteq \A$ contradicting the fact that
  $B$ is essential. If $k=1$ we are done; otherwise assume that $k$ is
  maximal with the property that $G\cap T^kG\neq\emptyset$, and note
  that $\bigcup_{i\in \Z} T^{ik}G$ is horizontally separating and so
  must be equal to $\cB$; in particular it contains $TG$. But $TG\cap
  T^{ik}G=\emptyset$ for $i<0$ and $i>1$ due to the maximality of $k$.
  Thus $TG\subseteq G\cup T^kG$, and since $G$ is compact, $TG$ cannot
  be contained in $T^kG$, so $G\cap TG\neq \emptyset$ as claimed.
\end{proof}

\begin{lem}\label{l.strip_sep} Suppose that $L$ and $R$ are closed
  connected subsets of $\cB$, with $L$ unbounded to the left and $R$
  unbounded to the right. If $L\cap R= \emptyset$, then $\cB\sm (L\cup
  R)$ is connected, and if $L\cap R\neq\emptyset$ then $L\cup R=\cB$.
\end{lem}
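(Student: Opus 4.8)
The statement concerns a decomposable cobasin boundary $B$ with lift $\cB$, which by Lemma~\ref{l.minimal_strips} is a horizontal coplane boundary in $\R^2$; in particular $\cB$ is horizontal (contained in some $\R\times[-M,M]$), horizontally separating, and minimal among horizontally separating sets. The plan is to treat the two cases separately.

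First I would handle the case $L\cap R=\emptyset$. Here $L$ and $R$ are disjoint closed subsets of the horizontal band, with $L$ unbounded to the left and $R$ unbounded to the right. Suppose for contradiction that $\cB\sm(L\cup R)$ is disconnected; write it as a disjoint union $U\uplus V$ of nonempty relatively open (hence, since $\cB$ is compact and this is a partition into two pieces, also relatively closed) sets. The idea is that removing $L\cup R$ cannot disconnect $\cB$ because $L$ and $R$ both ``escape horizontally'' to opposite ends of the band, so together with a piece of $\cB$ they cannot surround anything: more precisely, since $L\cup R$ is closed in $\cB$ and $\cB$ is compact, $L\cup R$ is closed in $\R^2$, and the sets $\ol U$, $\ol V$ are compact with $\ol U\cap \ol V\ssq L\cup R$. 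I would then argue that $L$ together with $\ol U$ (or with $\ol V$) would yield a horizontally separating set strictly smaller than $\cB$, contradicting minimality from Lemma~\ref{l.minimal_strips}. Concretely, one of the two pieces, say $\ol V$, can be ``absorbed'': the set $\cB\sm V$ is closed, connected (it equals $L\cup R\cup U$, and $\ol U$ meets $L\cup R$ by Lemma~\ref{l.toplemma} applied inside $\cB$), and I claim it is still horizontally separating. This is because $V$ is a bounded connected piece (it lies in the band and misses the unbounded ``arms'' $L$ and $R$), so removing it from $\cB$ cannot destroy the separation of $\R\times(-\infty,-M)$ from $\R\times(M,\infty)$ — any arc joining top to bottom avoiding $\cB\sm V$ could be pushed off the bounded set $V$. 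That contradicts the minimality of the coplane boundary.

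For the case $L\cap R\neq\emptyset$: here $L\cup R$ is a closed connected subset of $\cB$, unbounded both to the left and to the right. The claim is $L\cup R=\cB$. I would show $L\cup R$ is itself horizontally separating, and then invoke minimality from Lemma~\ref{l.minimal_strips}. To see that $L\cup R$ separates $\R\times(-\infty,-M)$ from $\R\times(M,\infty)$: since it is connected, contained in the band, and unbounded in both horizontal directions, it ``crosses'' the whole band; any path from the bottom half-plane to the top half-plane must meet it. The cleanest way to make this rigorous is probably a connectedness/winding argument — e.g. if $\gamma$ is a path from $\R\times\{-M-1\}$ to $\R\times\{M+1\}$ avoiding $L\cup R$, then by a standard plane-separation argument (using that $L\cup R$ is a connected closed set unbounded to both sides in a bounded-height band) one derives a contradiction, for instance by compactifying the band to an annulus/strip and noting $L\cup R$ becomes a continuum joining the two ends. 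Since $\cB$ is minimal among horizontally separating sets and $L\cup R\ssq\cB$ is horizontally separating, $L\cup R=\cB$.

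The main obstacle I anticipate is the first case: carefully justifying that the ``bounded bubble'' $V$ can be filled in without losing the horizontal separation property, i.e. turning the intuitive picture (``$L$ and $R$ are horizontal arms that can't enclose a region by themselves'') into a clean argument that applies Lemma~\ref{l.minimal_strips} correctly. One must be careful that $L\cup R\cup U$ is genuinely connected and genuinely still horizontally separating; the connectedness uses Lemma~\ref{l.toplemma} (the closure of the component $U$ of $\cB\sm(L\cup R)$ meets $L\cup R$), and the separation property is where a short plane-topology argument — perhaps via Lemma~\ref{l.component_separation} or a direct arc-pushing argument off the compact set $V$ — is needed. The second case is comparatively routine once the right compactification picture is set up.
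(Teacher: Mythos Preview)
Your treatment of the case $L\cap R\neq\emptyset$ is correct and matches the paper: $L\cup R$ is closed, connected, and unbounded in both horizontal directions inside the band, hence horizontally separating, so minimality (Lemma~\ref{l.minimal_strips}) forces $L\cup R=\cB$.

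In the disjoint case there are some small slips and one real gap. First, $\cB=\pi^{-1}(B)$ is the lift, so it is \emph{not} compact; your appeal to compactness is misplaced (though $U,V$ being relatively clopen in $\cB\sm(L\cup R)$ still gives that $\cB\sm V$ is closed in $\R^2$). Second, you assert $V$ is bounded, but nothing rules out $\cB\sm(L\cup R)$ having unbounded components. Third, the connectedness of $L\cup R\cup U$ is both unjustified (Lemma~\ref{l.toplemma} only gives that $\ol U$ meets $L\cup R$, not that it meets \emph{both} $L$ and $R$, which are disjoint) and unnecessary, since ``horizontally separating'' does not require connectedness.

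The genuine gap is the step you flag yourself: showing that $\cB\sm V$ is still horizontally separating. The ``arc-pushing off a bounded set'' heuristic fails here---removing any nonempty relatively open piece of a minimal separating set typically destroys separation, so you cannot argue locally. What you are missing is the observation that makes Lemma~\ref{l.component_separation} applicable. Since $L\cup R$ is a proper closed subset of $\cB$, minimality says $L\cup R$ is \emph{not} horizontally separating; hence the two complementary half-planes $W^-,W^+$ of $\cB$ lie in a single connected component $\mathcal U$ of $\R^2\sm(L\cup R)$. Because $L$ and $R$ are each connected and unbounded, $\mathcal U$ is simply connected, i.e.\ $\mathcal U\simeq\R^2$. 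Now $E:=\cB\sm(L\cup R)\subset\mathcal U$ is closed in $\mathcal U$ and separates $W^-$ from $W^+$ there, so Lemma~\ref{l.component_separation} (applied in $\mathcal U$) produces a single component $E_0$ of $E$ that already separates $W^-$ from $W^+$ in $\mathcal U$. Then $L\cup E_0\cup R$ is horizontally separating in $\R^2$, so by minimality it equals $\cB$, forcing $E_0=E$. This is exactly the paper's argument; your dual formulation (``remove $V$, still separating'') is equivalent, but it cannot be completed without passing through the simply-connected complementary domain.
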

\begin{proof}
  If $L\cap R \neq \emptyset$, then $L\cup R$ is horizontally
  separating, so by Lemma \ref{l.minimal_strips} it must be equal to
  $\cB$. Assume that $L\cap R=\emptyset$.  Let $W^-$ and $W^+$ be the
  connected components of $\R^2\sm \cB$ which are unbounded below and
  above, respectively, so $\cB= \bd W^-=\bd W^+$. Note that $L\cup R$
  cannot be horizontally separating (since this would contradict Lemma
  \ref{l.minimal_strips}); thus $W^-$ and $W^+$ are contained in the
  same connected component $U$ of $\R^2\sm (L\cup R)$. Since
  $E:=\cB\sm (L\cup R)\subseteq \bd W^-\cap \bd W^+$, it follows that
  $E\subseteq U$. Note that $U$ is simply connected, since both $L$
  and $R$ are connected and unbounded. Moreover, $E$ is a closed
  subset in the topology of $U$, and since $\cB$ separates $W^-$ from
  $W^+$ in $\R^2$ it follows that $E=\cB\cap U$ separates $W^-$ from
  $W^+$ in $U$. By Lemma \ref{l.component_separation} applied to
  $U\simeq \R^2$, some connected component $E_0$ of $E$ separates
  $W^-$ from $W^+$ in $U$. Since $E_0$ is closed in $U$, we have that
  $L\cup E_0\cup R$ is closed and horizontally separating, so by Lemma
  \ref{l.minimal_strips} it must be equal to $\cB$. This implies that
  $E_0=E$, so $E$ is connected.
\end{proof}

\begin{lem}\label{l.xy} There exists a minimal generator $G_0$ such
  that $G_0\cap T^kG_0\neq \emptyset$ if and only if $|k|\leq 1$.
  Moreover, $G_0\sm TG_0$ and $G_0\sm(TG_0\cup T^{-1}G_0)$ are connected and
  dense in $G_0$.
\end{lem}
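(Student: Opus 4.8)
The plan is to prove that \emph{every} minimal generator of $\cB$ has the stated properties; since any generator contains a minimal one, it then suffices to exhibit one generator. Here is where decomposability enters: write $B=B_1\cup B_2$ with $B_1,B_2$ proper subcontinua, and (as $B$ is connected) arrange $B_1\cap B_2\neq\emptyset$. Since $B$ is minimal among essential continua, each $B_i$ is inessential, so $\pi^{-1}(B_i)$ is a disjoint union of compact continua cyclically permuted by $T$. Choosing lifts $G_1,G_2$ of $B_1,B_2$ through a common point $\tilde x\in\pi^{-1}(B_1\cap B_2)$, the continuum $G=G_1\cup G_2\ssq\cB$ satisfies $\bigcup_{n\in\Z}T^nG=\pi^{-1}(B_1)\cup\pi^{-1}(B_2)=\cB$, so $G$ is a generator; Zorn's Lemma then yields a minimal generator $G_0\ssq G$.

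Now fix an arbitrary minimal generator $G_0$. By Lemma~\ref{l.gen}, $G_0\cap TG_0\neq\emptyset$, hence also $G_0\cap T^{-1}G_0\neq\emptyset$; as $G_0$ is bounded, $m:=\max\{k\ge 1:G_0\cap T^kG_0\neq\emptyset\}$ is well defined, and the goal of the first half is $m=1$. Suppose $m\ge 2$ and set $P_i:=T^{im}G_0$. The sets $L=\bigcup_{i\le 0}P_i$ and $R=\bigcup_{i\ge 1}P_i$ are closed and connected (consecutive $P_i$ meet, since $P_i\cap P_{i+1}=T^{im}(G_0\cap T^mG_0)\neq\emptyset$), unbounded to the left resp.\ right, and $L\cap R\supseteq G_0\cap T^mG_0\neq\emptyset$; Lemma~\ref{l.strip_sep} then gives $L\cup R=\cB$, i.e.\ $\cB=\bigcup_{i\in\Z}P_i$, and maximality of $m$ shows $P_i\cap P_j\neq\emptyset$ iff $|i-j|\le 1$. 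For $1\le j\le m-1$ the nonempty connected set $T^jG_0\ssq\cB$ can meet $P_i$ only when $|im-j|\le m$, which forces $i\in\{0,1\}$; it cannot lie inside $P_0=G_0$ (else $T^{nj}G_0\ssq G_0$ for all $n$, impossible since the first coordinate drifts to $+\infty$), nor inside $P_1=T^mG_0$ (else $T^{n(j-m)}G_0\ssq G_0$, impossible since the first coordinate drifts to $-\infty$), so $T^jG_0$ meets both $P_0$ and $P_1$. In particular $G_0\cap T^jG_0\neq\emptyset$, and we conclude that $G_0\cap T^kG_0\neq\emptyset$ for \emph{every} $|k|\le m$.

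\emph{The crux} is now to turn this into a contradiction with minimality. Since $m\ge 2$, we have $G_0\cap T^2G_0\neq\emptyset$, so running Lemma~\ref{l.strip_sep} once more with $L'=\bigcup_{n\le -1}T^nG_0$, $R'=\bigcup_{n\ge 1}T^nG_0$ (these meet, as $T^{-1}G_0\cap TG_0=T^{-1}(G_0\cap T^2G_0)\neq\emptyset$) gives $G_0\ssq\cB=\bigcup_{n\neq 0}T^nG_0=\bigcup_{1\le|n|\le m}T^nG_0$. The idea is to extract from this a proper subcontinuum of $G_0$ that still generates $\cB$: the candidate is $\ol{G_0\sm T^{-m}G_0}$, whose $T$‑translates cover $\cB$ (given $z\in\cB$, take $n=\min\{j:z\in T^jG_0\}$, so $z\in T^nG_0\sm T^{n-m}G_0$), and whose connectedness is obtained from the second alternative of Lemma~\ref{l.strip_sep} applied to the disjoint pair $\bigcup_{i\le -1}P_i$, $\bigcup_{i\ge 1}P_i$ (which shows $G_0\sm(T^{-m}G_0\cup T^mG_0)$ is connected), together with a density argument; if this subcontinuum is proper in $G_0$ we contradict minimality, and otherwise one checks that the overlap set $G_0\cap(T^{-1}G_0\cup TG_0)$ has empty interior in $G_0$ and iterates the argument. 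A cleaner version when $m=2$ (where the above forces $G_0=(G_0\cap TG_0)\cup(G_0\cap T^{-1}G_0)$): if the overlap $F:=G_0\cap TG_0$ were connected it would be a proper subcontinuum with $G_0=F\cup T^{-1}F$ and $F\cap T^{-1}F=\emptyset$ by minimality, contradicting connectedness of $G_0$; the general overlap is handled by applying this to the components of $F$ in the connected bipartite intersection pattern they form with those of $T^{-1}F$. Either way, $m=1$.

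Finally, the ``moreover'' statement. With $m=1$ the translates $T^nG_0$ meet only in consecutive pairs, so Lemma~\ref{l.strip_sep} applied to the \emph{disjoint} pair $L=\bigcup_{n\le -1}T^nG_0$, $R=\bigcup_{n\ge 1}T^nG_0$ shows $V:=G_0\sm(T^{-1}G_0\cup TG_0)=\cB\sm(L\cup R)$ is connected, and it is nonempty since $G_0=T^{-1}G_0\cup TG_0$ is impossible by a drift argument. One shows $V$ is dense in $G_0$: otherwise $\ol V$ is a proper subcontinuum and, using that $G_0\cap T^{\pm2}G_0=\emptyset$ forces the index set $\{j:z\in T^jG_0\}$ to be $\{a\}$ or $\{a,a+1\}$ for every $z\in\cB$ (so the $T$‑translates of $G_0\sm TG_0$ cover $\cB$), one extracts a smaller generator, contradicting minimality. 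Once $V$ is connected and dense in $G_0$, any set $E$ with $V\ssq E\ssq G_0$ is connected and dense in $G_0$ (a separation of $E$ would isolate an open subset of $E$ disjoint from the dense connected $V$); applying this to $E=G_0\sm TG_0$ and to $E=V=G_0\sm(TG_0\cup T^{-1}G_0)$ gives the claim. I expect the crux step (converting an overlap of width $\ge 2$ into a failure of minimality) and, secondarily, the density arguments to be the only nonroutine parts.
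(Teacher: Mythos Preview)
Your construction of a generator from the decomposition $B=B_1\cup B_2$ and the ``moreover'' paragraph are essentially correct and close to the paper's argument. The genuine problem is exactly where you flag it: the crux step.

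The paper does \emph{not} try to prove $m=1$ for an arbitrary minimal generator directly. Instead it keeps track of the extra structure coming from decomposability: the minimal generator $G_0$ is chosen inside $\cX\cup T^n\cY$, where $\cX$ and $\cY$ are lifts of the two pieces and each has pairwise disjoint $T$-translates. If $k>1$ were maximal with $G_0\cap T^kG_0\neq\emptyset$, then $\bigcup_{i\neq 1}T^iG_0$ is closed and horizontally separating, hence equals $\cB$ and contains $T\cX$; but $T\cX$ is disjoint from every $T^i\cX$ with $i\neq 1$, so $T\cX\subseteq\bigcup_jT^j\cY$, which projects to $X\subseteq Y$, a contradiction. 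This is a two-line argument once the right container $\cX\cup T^n\cY$ is in hand. (That \emph{every} minimal generator then has $m=1$ is proved in the very next lemma, using this special $G_0$ as a comparison object.)

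Your attempt to bypass this and squeeze a contradiction out of minimality alone has a gap. From $L'\cup R'=\cB$ you correctly get $G_0\subseteq\bigcup_{1\le|n|\le m}T^nG_0$, but for $m=2$ this gives $G_0\subseteq T^{-2}G_0\cup T^{-1}G_0\cup TG_0\cup T^2G_0$, \emph{not} $G_0=(G_0\cap TG_0)\cup(G_0\cap T^{-1}G_0)$; there is no reason a point of $G_0\cap T^{-2}G_0$ must lie in $T^{-1}G_0$. So the ``cleaner $m=2$'' argument does not start. The general plan (show $\ol{G_0\sm T^{-m}G_0}$ is a generator; if proper, contradiction; if not, ``iterate'') also stalls: when $\ol{G_0\sm T^{-m}G_0}=G_0$ you learn only that $G_0\cap T^{\pm m}G_0$ has empty interior in $G_0$, which is compatible with $m\ge 2$ and gives nothing to iterate on. You would need some \emph{additional} structural input to force properness, and that is precisely what the inclusion $G_0\subseteq \cX\cup T^n\cY$ provides in the paper's proof.
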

\begin{proof} 
  Since $B$ is decomposable, there exists a decomposition $B=X\cup Y$
  into proper subcontinua.  As $B$ is a cobasin boundary, both $X$ and
  $Y$ must be inessential in $\A$, which implies that there are open
  topological disks $D_X\supset X$ and $D_Y\supset Y$. Let $\til{D}_X$
  be a connected component of $\pi^{-1}(D_X)$, and $\cX =
  \pi^{-1}(X)\cap \til{D}_X$. Let $\til{D}_Y$ be a connected component
  of $\pi^{-1}(D_Y)$, and $\cY = \pi^{-1}(X)\cap \til{D}_Y$. The sets
  $\cX$ and $\cY$ project injectively onto $X$ and $Y$, respectively,
  so they are continua and $T^i\cX\cap \cX=\emptyset$ for all $i\neq
  0$, and similarly for $\cY$. Since $X\cap Y\neq \emptyset$, there
  exists $n\in \Z$ such that $T^n\cY\cap \cX\neq \emptyset$. The set
  $\cX \cup T^n\cY$ is a generator, so it contains some minimal
  generator $G_0$. Let $k$ be the largest integer such that $G_0\cap
  T^kG_0\neq \emptyset$, and suppose for a contradiction that $k>1$.
  Then $\bigcup_{i\in \Z,\, i\neq 1} T^iG_0\subseteq \bigcup_{i\in
    \Z,\, i\neq 1} T^i\cX \cup T^{n+i}\cY$ is closed and horizontally
  separating, so in particular it contains $T\cX$. Since $T\cX$ is
  disjoint from $T^i\cX$ for all $i\neq 1$, it follows that $T\cX
  \subseteq \bigcup_{j\in \Z} T^{j}\cY$, which implies that $X=\pi(\cX) \subset Y$, contradicting our choice of $X$ and $Y$.

  By Lemma \ref{l.strip_sep} we have that $E_0 := \cB \sm
  \bigcup_{k\neq 1} T^kG_0= G_0\sm (T^{-1}G_0\cup TG_0)\neq \emptyset$ is
  connected. Note that the closure of any connected component of
  $G_0\sm TG_0$ intersects $TG_0$ (see Lemma \ref{l.toplemma}), so any
  connected component of $G_0\sm TG_0$ must contain $E_0$ (otherwise
  it would be contained in $T^{-1}G_0$ which is disjoint from $TG_0$),
  so there is only one such component. Thus $G_0\sm TG_0$ is
  connected. Since $\bigcup_{k\in \Z} T^k(G_0\sm TG_0) = \cB$, it
  follows that $\ol{G_0\sm TG_0}$ is a generator and by minimality
  $G_0 = \ol{G_0\sm TG_0}$. This implies that $C=G_0\cap TG_0$ has
  empty interior in the restricted topology to $G_0$, and therefore
  $E_0 = G_0\sm (T^{-1}G_0\cup TG_0) = G_0\sm (C\cup TC)$ is also
  dense in $G_0$, completing the proof.
\end{proof}

\begin{lem}\label{l.gen_inter} If $G$ is any minimal generator of
  $\cB$, then $T^kG\cap G\neq \emptyset$ if and only if $|k|\leq 1$.
  Moreover, $G\sm TG$ and $G\sm (TG\cup T^{-1}G)$ are connected and
  dense in $G$.
\end{lem}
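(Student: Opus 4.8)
The plan is to bootstrap from Lemma \ref{l.xy}, which already produces \emph{one} minimal generator $G_0$ with the desired properties, to \emph{every} minimal generator $G$. The key structural observation is that any two minimal generators must intersect after an appropriate integer translation, and in fact the family $\{T^kG_0\}_{k\in\Z}$ and $\{T^kG\}_{k\in\Z}$ are ``interleaved'' along the strip $\cB$. Concretely, since $G$ and $G_0$ are both generators, $\bigcup_{k\in\Z}T^kG = \cB = \bigcup_{k\in\Z}T^kG_0$; because $G_0$ is compact, it is covered by finitely many translates $T^kG$, and similarly $G$ is covered by finitely many translates $T^kG_0$. First I would use this to show that there is a well-defined ``direction'': after replacing $G$ by a translate (which changes nothing in the statement, since the claim is translation-invariant), we may assume $G\cap G_0\neq\emptyset$, and then one checks that $G\subseteq T^{-1}G_0\cup G_0\cup TG_0$ while $G_0\subseteq T^{-1}G\cup G\cup TG$. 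This confinement is where Lemma \ref{l.strip_sep} does the work: if $G$ met some $T^kG_0$ with $k\geq 2$, then writing $L=\bigcup_{i\le 0}T^iG$ (unbounded to the left) and $R=\bigcup_{i\ge 1}T^iG$ (unbounded to the right) — or the analogous splitting for $G_0$ — and using that these are connected (as $G$ is a generator, consecutive translates overlap), one produces a strictly smaller horizontally separating set, contradicting Lemma \ref{l.minimal_strips}, exactly as in the proof of Lemma \ref{l.xy}.

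Granting the mutual confinement $G_0\subseteq T^{-1}G\cup G\cup TG$ and $G\subseteq T^{-1}G_0\cup G_0\cup TG_0$, I would next prove $T^kG\cap G=\emptyset$ for $|k|\ge 2$. Suppose $T^kG\cap G\neq\emptyset$ with $k\ge 2$ (the case $k\le-2$ is symmetric). Then $\bigcup_{i\in\Z}T^{ik}G$ is horizontally separating, hence equals $\cB$ by Lemma \ref{l.minimal_strips}; in particular it contains $TG$. But by the confinement of $G$ inside translates of $G_0$, together with the confinement of $G_0$ inside translates of $G$, one sees that $TG$ can only meet $T^iG$ for $i\in\{0,1,2\}$, so $TG\subseteq G\cup T^kG$ when $k=2$ (and an even stronger restriction when $k>2$, forcing $TG\subseteq G$). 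Since $G$ is a compact generator and $TG$ is a translate of the same size, $TG$ cannot be contained in a single one of $G$ or $T^kG$; and $TG\subseteq G\cup T^kG$ with both pieces closed would disconnect $TG$ unless the two pieces overlap, which by minimality (the overlaps $G\cap TG$ have empty interior, exactly as established for $G_0$ in Lemma \ref{l.xy}) is impossible. This contradiction gives $T^kG\cap G=\emptyset$ for $|k|\ge2$. That $T^kG\cap G\neq\emptyset$ for $|k|\le1$ is immediate: $G\cap G=G\neq\emptyset$, and $G\cap TG\neq\emptyset$ holds for any generator by Lemma \ref{l.gen}.

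Finally, for the connectedness and density statements, I would simply transport the argument from the second paragraph of the proof of Lemma \ref{l.xy} verbatim, now that we know $T^kG\cap G\neq\emptyset$ iff $|k|\le 1$. Namely, $E := \cB\sm\bigcup_{k\neq1}T^kG = G\sm(T^{-1}G\cup TG)$ is connected by Lemma \ref{l.strip_sep} (applied with $L=\bigcup_{i\le0}T^iG$, $R=\bigcup_{i\ge 2}T^iG$, which are disjoint precisely because $T^kG\cap G=\emptyset$ for $|k|\ge2$); every connected component of $G\sm TG$ has closure meeting $TG$ by Lemma \ref{l.toplemma}, hence must contain $E$ (it cannot be contained in $T^{-1}G$, which is disjoint from $TG$), so $G\sm TG$ is connected; then $\ol{G\sm TG}$ is a generator, so $G=\ol{G\sm TG}$ by minimality, whence $C:=G\cap TG$ has empty interior in $G$, and therefore $E=G\sm(C\cup T^{-1}C)$ is dense in $G$ as well. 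The main obstacle is the first step — establishing the mutual confinement of $G$ and $G_0$ within one translate on each side — since everything afterward is a faithful replay of Lemma \ref{l.xy}; the subtlety there is to set up the left/right unbounded connected subsets correctly so that Lemma \ref{l.strip_sep} and Lemma \ref{l.minimal_strips} can be invoked to rule out ``long-range'' intersections.
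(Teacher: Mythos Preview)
Your final paragraph (connectedness and density of $G\sm TG$ and $G\sm(TG\cup T^{-1}G)$) is correct and is exactly what the paper does: once one knows $T^kG\cap G\neq\emptyset\iff|k|\le1$, the argument from the second half of Lemma~\ref{l.xy} goes through verbatim.

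The gap is in your reduction to that statement. Even granting the mutual confinement $G\subseteq T^{-1}G_0\cup G_0\cup TG_0$ and $G_0\subseteq T^{-1}G\cup G\cup TG$, the assertion that ``$TG$ can only meet $T^iG$ for $i\in\{0,1,2\}$'' is exactly the statement $G\cap T^jG=\emptyset$ for $|j|\ge2$, which is what you are trying to prove. Confinement in three translates of $G_0$ (combined with confinement of $G_0$ in three translates of $G$) only yields the weaker bound $|j|\le 3$ or so, leaving $|j|=2,3$ unhandled. And even if you take $k$ maximal and arrive at $TG\subseteq G\cup T^kG$ as in the proof of Lemma~\ref{l.gen}, no contradiction follows: the two closed pieces $G$ and $T^kG$ \emph{do} overlap (that is your hypothesis $G\cap T^kG\neq\emptyset$), so $TG$ is not disconnected, and all you recover is $TG\cap G\neq\emptyset$, which you already knew from Lemma~\ref{l.gen}.

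The paper's argument avoids this detour entirely. Supposing $G\cap T^kG\neq\emptyset$ with $k\ge2$, pick $z\in G$ with $T^kz\in G$ and translate $G$ so that $z\in T^{-1}G_0$; then $G$ meets both $T^{-1}G_0$ and $T^{k-1}G_0$ with $k-1\ge1$. Hence $G\cup\bigcup_{i\neq0}T^iG_0$ is closed, connected, and horizontally separating, so equals $\cB$ by Lemma~\ref{l.minimal_strips}. In particular it contains $G_0$, so $G_0\sm\bigcup_{i\neq0}T^iG_0=G_0\sm(T^{-1}G_0\cup TG_0)\subseteq G$. Now the \emph{density} conclusion of Lemma~\ref{l.xy} gives $G_0\subseteq G$, whence $G=G_0$ by minimality---contradicting Lemma~\ref{l.xy} applied to $G_0$. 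The point you are missing is that the density of $G_0\sm(T^{-1}G_0\cup TG_0)$ in $G_0$ is the lever that forces $G_0\subseteq G$; once you have that, there is nothing more to do.
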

\begin{proof} 
  Suppose that $T^kG\cap G\neq \emptyset$ for some $k>1$, so there
  exists $z\in G$ such that $T^kz\in G$. Let $G_0$ be as in Lemma
  \ref{l.xy}. Replacing $G$ by $T^iG$ for a suitable $i$, we may
  assume that $z\in T^{-1}G_0$. This means that $G$ intersects
  $T^{-1}G_0$ and $T^{k-1}G_0$, where $k-1\geq 1$. Thus the set $G\cup
  \bigcup_{k\neq 0} T^iG_0$ is closed, connected and horizontally
  separating, and by Lemma \ref{l.minimal_strips} it should be equal
  to $\cB$. Thus $G_0\sm (T^{-1}G_0\cup TG_0) = G_0\sm \bigcup_{k\neq
    0} T^iG_0 \subseteq G$, implying that $G_0\subseteq G$. By
  minimality $G_0=G$, contradicting the fact that $G\cap T^kG\neq
  \emptyset$ with $k>1$.

  Knowing that $G\cap T^kG\neq \emptyset \iff |k|\leq 1$, the
  remaining claims are proved exactly as in the last paragraph of the
  proof of Lemma \ref{l.xy}.
\end{proof}

Given a minimal generator $G$ of $\cB$, let $\cL_n(G)=\bigcup_{k\leq
  n} T^{k}(G)$ and $\cR_n(G)=\bigcup_{k\geq n} T^{k}(G)$. With these
notions, we have

\begin{lem} \label{l.generator_inequality} If $G$ and $G'$ are two
  different minimal generators of $\cB$, then either
  $G'\ssq\cB\smin\cL_{-1}(G)\ssq\cR_0(G)$ or $G'\ssq
  \cB\smin\cR_1(G)\ssq\cL_0(G)$.
\end{lem}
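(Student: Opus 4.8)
The plan is to exploit the structure established in Lemma \ref{l.gen_inter}, namely that every minimal generator $G$ satisfies $G\cap T^kG\neq\emptyset\iff|k|\leq 1$, together with the "one-generator covers everything" principle coming from Lemma \ref{l.minimal_strips} (a horizontally separating closed connected subset of $\cB$ must be all of $\cB$). First I would record the qualitative picture: since $G$ is a minimal generator, $\cL_{-1}(G)=\bigcup_{k\leq -1}T^kG$ is closed and unbounded to the left, $\cR_1(G)=\bigcup_{k\geq 1}T^kG$ is closed and unbounded to the right, and by Lemma \ref{l.gen_inter} these two sets are disjoint (any overlap would force $T^aG\cap T^bG\neq\emptyset$ with $a\leq -1<1\leq b$, i.e.\ $|b-a|\geq 2$). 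Moreover $\cL_{-1}(G)$ and $\cR_1(G)$ each meet $G$ (in $T^{-1}G\cap G$ and $TG\cap G$ respectively, which are nonempty), so $\cL_0(G)=\cL_{-1}(G)\cup G$ and $\cR_0(G)=\cR_1(G)\cup G$ are connected, and together with $G$ they cover $\cB$. This gives the two inclusions $\cB\smin\cL_{-1}(G)\ssq\cR_0(G)$ and $\cB\smin\cR_1(G)\ssq\cL_0(G)$ for free; the content of the lemma is that the other minimal generator $G'$ lands entirely in one of these complements.

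Next I would use connectedness of $G'$ to show it cannot meet both $\cL_{-1}(G)$ and $\cR_1(G)$. Suppose it did. Then, since $G'$ is a connected subset of $\cB=\cL_{-1}(G)\cup G\cup\cR_1(G)$ with the outer two pieces closed and disjoint, $G'$ must meet $G$ as well (a connected set cannot be split by two disjoint relatively closed pieces without meeting the middle — more carefully, apply Lemma \ref{l.strip_sep} with $L=\cL_{-1}(G)$ and $R=\cR_1(G)$, which are disjoint, to conclude $\cB\sm(L\cup R)=G\sm(T^{-1}G\cup TG)$ is connected and is precisely the "middle"; a connected $G'$ meeting both $L$ and $R$ forces $G'\cup L\cup R$ to be horizontally separating, hence equal to $\cB$ by Lemma \ref{l.minimal_strips}). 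That last equality $G'\cup\cL_{-1}(G)\cup\cR_1(G)=\cB$ is the crux: it implies the middle piece $G\sm(T^{-1}G\cup TG)$ is contained in $G'$, and since by Lemma \ref{l.gen_inter} that middle piece is dense in $G$, we get $G\ssq\ol{G'}=G'$; by minimality of $G'$ this yields $G=G'$, contradicting the hypothesis that they are different.

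Having ruled out the "straddling" case, $G'$ is disjoint from at least one of $\cL_{-1}(G)$, $\cR_1(G)$. In the first case $G'\ssq\cB\smin\cL_{-1}(G)\ssq\cR_0(G)$; in the second $G'\ssq\cB\smin\cR_1(G)\ssq\cL_0(G)$; the two displayed inclusions in each case were established in the first paragraph. I expect the main obstacle to be the careful bookkeeping in the straddling step: one must be sure that $\cL_{-1}(G)$ and $\cR_1(G)$ are genuinely disjoint (this is exactly where $|k|\leq 1$ from Lemma \ref{l.gen_inter} is used) and that invoking Lemma \ref{l.strip_sep} is legitimate, i.e.\ that $\cL_{-1}(G)$ is closed, connected, and unbounded to the left while $\cR_1(G)$ is closed, connected, unbounded to the right — connectedness of each follows since consecutive translates $T^kG$ and $T^{k+1}G$ intersect. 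Everything else is a direct consequence of minimality plus the density statement already proved in Lemma \ref{l.gen_inter}.
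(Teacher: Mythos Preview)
Your proof is correct and follows essentially the same argument as the paper: assume $G'$ meets both $\cL_{-1}(G)$ and $\cR_1(G)$, invoke Lemma~\ref{l.minimal_strips} to conclude $\cL_{-1}(G)\cup G'\cup\cR_1(G)=\cB$, deduce that the dense subset $G\sm(TG\cup T^{-1}G)$ lies in $G'$, and hence $G\ssq G'$, contradicting minimality. The only difference is cosmetic: you supply more justification for the side inclusions and for the closedness/connectedness of $\cL_{-1}(G)$ and $\cR_1(G)$, and your aside invoking Lemma~\ref{l.strip_sep} is not actually needed (the equality with $\cB$ comes directly from Lemma~\ref{l.minimal_strips}, as you yourself note).
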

\begin{proof}
  Suppose $G'$ intersects both $\cL_{-1}(G)$ and $\cR_1(G)$. Then
  $\cL_{-1}(G)\cup G'\cup \cR_1(G)=\cB$ by Lemma
  \ref{l.minimal_strips}, so the union contains $G$. Since
  $G\sm(TG\cup T^{-1}G)$ is disjoint from $\cL_{-1}(G)\cup \cR_1(G)$,
  we have that $G\sm (TG\cup T^{-1}G)\subseteq G'$, which implies by
  Lemma \ref{l.gen_inter} that $G\subseteq G'$, so by the minimality
  $G=G'$.
\end{proof}

\begin{cor} \label{c.generator_ends} If $G$ and $G'$ are minimal
  generators of $\cB$, then $G'$ is contained in two adjacent copies
  of $G$ and vice versa.
\end{cor}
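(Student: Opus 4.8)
The plan is to bootstrap from Lemma~\ref{l.generator_inequality} by applying it not only to the pair $(G,G')$ but to \emph{every} pair $(G,T^jG')$, $j\in\Z$. First I would dispose of the degenerate case: if $G'=T^jG$ for some $j$, then trivially $G'\subseteq T^jG\cup T^{j+1}G$, two adjacent copies of $G$, and symmetrically $G$ sits in two adjacent copies of $G'$. So from now on assume $T^jG'\neq G$ for all $j\in\Z$; then each translate $T^jG'$ is a minimal generator distinct from $G$, and Lemma~\ref{l.generator_inequality} applies to the pair $(G,T^jG')$.

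Next I would introduce the index set $S=\{k\in\Z:\ G'\cap T^kG\neq\emptyset\}$. Since $G'$ is a non-empty continuum contained in $\cB=\bigcup_{k\in\Z}T^kG$, the set $S$ is non-empty, and $G'\subseteq\bigcup_{k\in S}T^kG$ because $G'$ is disjoint from $T^kG$ whenever $k\notin S$. For fixed $j$ one has $T^jG'\cap T^kG\neq\emptyset$ exactly when $k-j\in S$, so the ``touching set'' of $T^jG'$ relative to $G$ equals $S+j$. Now Lemma~\ref{l.generator_inequality} applied to $(G,T^jG')$ says precisely that $T^jG'$ is disjoint from $T^kG$ for all $k\leq-1$, or disjoint from $T^kG$ for all $k\geq1$; equivalently, $S+j\subseteq\Z_{\geq0}$ or $S+j\subseteq\Z_{\leq0}$.

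The final step is the elementary observation that this dichotomy, holding for every $j\in\Z$, forces $S$ to be tiny. If $S$ were unbounded below, then $S+j\subseteq\Z_{\geq0}$ would fail for every $j$; if $S$ were unbounded above, then $S+j\subseteq\Z_{\leq0}$ would fail for every $j$; so $S$ is bounded, with $m:=\min S$ and $M:=\max S$. For each $j$ the dichotomy reads $j\geq-m$ or $j\leq-M$, and these two conditions cover all of $\Z$ only if no integer lies strictly between $-M$ and $-m$, i.e.\ $M\leq m+1$. Hence $S\subseteq\{m,m+1\}$ and $G'\subseteq\bigcup_{k\in S}T^kG\subseteq T^mG\cup T^{m+1}G$. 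Exchanging the roles of $G$ and $G'$, which the hypothesis treats symmetrically, yields that $G$ is contained in two adjacent copies of $G'$. I do not expect a genuine obstacle here: all the topological substance is already packaged into Lemma~\ref{l.generator_inequality}, and the only point needing a little care is to invoke that lemma on every translate of $G'$, rather than on $G'$ alone, so as to pin the touching set down to two consecutive integers.
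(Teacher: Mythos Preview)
Your argument is correct: the paper states this as an immediate corollary of Lemma~\ref{l.generator_inequality} without giving a proof, and your derivation---applying that lemma to every translate $T^jG'$ and reading off that the touching set $S$ has at most two consecutive elements---is exactly the kind of argument the word ``Corollary'' is standing in for. A marginally shorter variant is to first translate $G'$ so that $G'\cap G\neq\emptyset$ and then apply the lemma just twice (to $G'$ and to $T^{-1}G'$), but your organisation via the set $S$ is equally valid and arguably cleaner.
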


A \emph{cut} (of $\cB$) is a set of the form $G\cap T^{-1}G$ where $G$
is a minimal generator of $\cB$. We denote by $\cC$ the family of all
cuts. Note that by Lemma \ref{l.generator_inequality}, cuts are
pairwise
disjoint. 

Given a cut $C=G\cap T^{-1}(G)$, we let $\wh\cR(C)=\cR_0(G)$ and
$\wh\cL(C)=\cL_{-1}(G)$, so that $C=\wh\cR(C)\cap \wh\cL(C)$. Further,
we let $\cL(C)=\wh\cL(C)\smin C$ and $\cR(C)=\wh\cR(C)\smin C$. We
write $C\prec C'$ if $C\ssq \cL(C')$, or equivalently if $C'\ssq
\cR(C)$. By Lemma \ref{l.generator_inequality} and its corollary,
$\prec$ defines a total order in $\cC$. We extend this notation to
compare arbitrary subsets $S\ssq\cB$ with cuts by writing $S\prec C$
if $S\ssq \cL(C)$ and $S\preccurlyeq C$ if $S\ssq\wh\cL(C)$. If $z\in
\cB$ and $\{z\}\prec C$, we simply write $z\prec C$. For two cuts
$C\prec C'$, we let $(C,C')_\cB = \cR(C)\cap\cL(C')=\{z\in\cB\mid
C\prec z\prec C'\}$ and $[C,C']_\cB=\wh\cR(C)\cap
\wh\cL(C')=(C,C')_\cB\cup C\cup C'$.

We note that cuts need not be connected. However, we have:
\begin{lem}
  \label{l.interval_connectedness}
  Given two cuts $C\prec C'$, the set $(C,C')_{\cB}$ is connected and
  its closure is $[C,C']_\cB$.
\end{lem}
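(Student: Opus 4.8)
The plan is to fix a minimal generator $G$ with $C = G \cap T^{-1}G$, so that $\wh\cR(C) = \cR_0(G)$ and $\wh\cL(C) = \cL_{-1}(G)$, and similarly a minimal generator $G'$ realising $C'$. By Corollary \ref{c.generator_ends} I may, after replacing $G$ by $T^jG$ for a suitable shift and using that $C \prec C'$, assume that $G'$ lies in two consecutive copies of $G$ that are strictly to the right of $G$, say $G' \subseteq T^{m}G \cup T^{m+1}G$ with $m \geq 1$. First I would reduce the problem to a bounded window: the relevant piece of $\cB$ between the two cuts is contained in the closed strip $\wh\cR(C) \cap \wh\cL(C') = \cR_0(G) \cap \cL_m(G')$, which is a finite union of translated copies of $G$ (and of $G'$), hence connected would follow if the "overlap" pieces glue up. The cleanest route is to exploit Lemma \ref{l.strip_sep} and Lemma \ref{l.minimal_strips} exactly as in the proofs of Lemmas \ref{l.xy} and \ref{l.gen_inter}: the complement $\cB \sm (C,C')_\cB$ is the disjoint union of $\wh\cL(C)$ and $\wh\cR(C')$, two closed connected sets unbounded to the left and to the right respectively, with empty intersection (since $C \prec C'$); so Lemma \ref{l.strip_sep} applies verbatim and yields that $(C,C')_\cB = \cB \sm (\wh\cL(C) \cup \wh\cR(C'))$ is connected. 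That is really the heart of it — the bounded-window discussion is only needed to confirm the hypotheses of Lemma \ref{l.strip_sep}, namely that $\wh\cL(C)$ and $\wh\cR(C')$ are connected and disjoint, the latter being precisely the definition of $C \prec C'$ together with Lemma \ref{l.generator_inequality}.

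For connectedness of $\wh\cL(C) = \cL_{-1}(G)$: it is the increasing (in fact, nested-overlapping) union $\bigcup_{k \leq -1} T^k G$, and consecutive copies $T^k G$ and $T^{k-1}G = T^{k-1}G$ meet by Lemma \ref{l.gen_inter} (since $|k - (k-1)| = 1$); a union of connected sets any two consecutive of which intersect is connected, so $\cL_{-1}(G)$ is connected, and it is plainly unbounded to the left. The same argument gives $\wh\cR(C') = \cR_0(G')$ connected and unbounded to the right. Disjointness $\wh\cL(C) \cap \wh\cR(C') = \emptyset$ is exactly $C \prec C'$ unravelled: $C \prec C'$ means $C' \subseteq \cR(C) = \wh\cR(C) \sm C$, equivalently $\wh\cL(C) \subseteq \cB \sm \wh\cR(C')$ after applying the dichotomy of Lemma \ref{l.generator_inequality} to $G$ and $G'$ in the orientation fixed above — here one has to be a little careful to check both sets sit on the correct sides, but this is bookkeeping with the total order $\prec$ on $\cC$ established right before the lemma.

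Finally, for the closure statement: $(C,C')_\cB$ is open in $\cB$ (its complement $\wh\cL(C) \cup \wh\cR(C')$ is closed), and $[C,C']_\cB = (C,C')_\cB \cup C \cup C'$ by definition. Since $C \subseteq G$ and $G \sm TG$ is dense in $G$ (Lemma \ref{l.gen_inter}), and points of $G \sm TG$ lying "just to the right of $C$" belong to $(C,C')_\cB$, every point of $C$ is a limit of points of $(C,C')_\cB$; symmetrically using density of $G' \sm T^{-1}G'$ in $G'$ for $C'$. Conversely $(C,C')_\cB \subseteq [C,C']_\cB$ and $[C,C']_\cB$ is closed (finite union of the closed sets $(C,C')_\cB$'s closure-candidates... more precisely it equals $\wh\cR(C) \cap \wh\cL(C')$, an intersection of closed sets), so $\overline{(C,C')_\cB} = [C,C']_\cB$. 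The main obstacle I anticipate is not any single deep step but the sign/orientation bookkeeping needed to invoke Lemma \ref{l.strip_sep} with the roles of $L,R$ assigned correctly to $\wh\cL(C)$ and $\wh\cR(C')$ — once the window is normalised via Corollary \ref{c.generator_ends} this should be routine, and everything else is a direct transcription of the techniques already developed in Lemmas \ref{l.xy}--\ref{l.generator_inequality}.
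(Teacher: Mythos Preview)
Your approach is correct and essentially identical to the paper's: connectedness of $(C,C')_\cB = \cB \sm (\wh\cL(C) \cup \wh\cR(C'))$ via Lemma~\ref{l.strip_sep} with $L=\wh\cL(C)$ and $R=\wh\cR(C')$, and the closure statement via the density part of Lemma~\ref{l.gen_inter}. The paper's proof consists of exactly these two invocations with no further detail; your bounded-window normalisation is unnecessary, and there is a harmless orientation slip in the closure argument (the points of $G$ strictly to the right of $C=G\cap T^{-1}G$ form $G\sm T^{-1}G$, not $G\sm TG$, but since $G\sm(TG\cup T^{-1}G)$ is dense in $G$ and contained in $\cR(C)$, and $\cL(C')$ is relatively open and contains $C$, the argument goes through).
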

\begin{proof}
  We have $(C,C')_{\cB} = \cB\sm (\cL_0(G) \cup \cR_0(G'))\neq
  \emptyset$ for some minimal generators $G,G'$. The connectedness
  follows from Lemma \ref{l.strip_sep}, and the fact that its closure
  is $[C,C']_{\cB}$ follows easily from Lemma \ref{l.gen_inter}.
\end{proof}
Let $A$ be the essential circloid such that $\bd A=B$ (\ie $A$ is the union
of $B$ with all bounded connected components of $\A\sm B$), and let
$\cA=\pi^{-1}(A)$ be its lift. A generator of $\cA$ is a continuum
$G\ssq \cA$ which satisfies $\bigcup_{n\in \Z}T^nG=\cA$.  In order to go
over from a the decomposable cobasin boundary $B$ to the corresponding
circloid $A$, the following statements will be crucial.

\begin{lem}\label{l.component_bounded} All connected components of
  $\cA\sm \cB$ are topological disks with diameter bounded by a
  uniform constant $M$.
\end{lem}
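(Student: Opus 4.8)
The plan is to exploit the decomposition $B = X \cup Y$ into proper subcontinua (which exists since $B$ is decomposable) together with the minimal-generator machinery built in this section. First I would recall that each of $X$ and $Y$ is inessential in $\A$, hence contained in an open topological disk; passing to the universal cover, the lifts $\cX, \cY$ of $X, Y$ inside suitable disks $\til D_X, \til D_Y$ have diameter bounded by some constant (the diameter of the closures of $\til D_X, \til D_Y$), and moreover $T^i \cX \cap \cX = \emptyset$ for $i \neq 0$, similarly for $\cY$. From the proof of Lemma~\ref{l.xy} we know there is a minimal generator $G_0 \subseteq \cX \cup T^n \cY$ for an appropriate $n \in \Z$, so $\diam G_0 \leq C$ for a uniform constant $C$ depending only on $\diam \til D_X$ and $\diam \til D_Y$ (and $n$, which is itself controlled since $T^n\cY$ must meet the bounded set $\cX$). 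By Corollary~\ref{c.generator_ends}, every minimal generator is contained in two adjacent copies of $G_0$, hence every minimal generator has diameter at most $2C + 1$; in particular the cuts $C = G \cap T^{-1}G$ all have uniformly bounded diameter.

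Next I would relate the components of $\cA \sm \cB$ to these cuts. Let $V$ be a connected component of $\cA \sm \cB$. Since $V$ is a bounded component of $\R^2 \sm \cB$ lying between the minimal strip and not equal to $W^\pm$, its closure $\ol V$ is a continuum contained in $\cA$ with $\bd V \subseteq \cB$. The key claim is that $\bd V$ is contained in a single ``interval'' $[C, C']_\cB$ of bounded length in the cut order $\prec$ — in fact I expect $\bd V$ to be squeezed between two consecutive cuts, or at worst to meet a uniformly bounded number of cuts. To see this, suppose $\bd V$ met cuts $C \prec C'$ that are far apart; then $\cL_{-1}(G) $ and $\cR_1(G')$ (for the generators defining $C, C'$) would be two disjoint closed connected sets, each unbounded on one side, whose union together with a path through $V$ from one to the other would separate $W^-$ from $W^+$ in $\R^2$ — but $V$ connects a point near $\cL_{-1}(G)$ to a point near $\cR_1(G')$ \emph{on the same side} of the strip, and since $V$ is disjoint from $\cB$ this contradicts the fact that $\cB$ is the \emph{minimal} horizontally separating set (Lemma~\ref{l.minimal_strips}). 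More carefully: $\cL_{-1}(G) \cup \ol V \cup \cR_1(G')$ would be a proper closed horizontally separating subset of $\cB \cup \cA$-stuff, forcing by minimality that $\bd V$ can straddle at most adjacent cuts.

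Once $\bd V$ is trapped in $[C, C']_\cB$ with $C, C'$ consecutive (or within bounded cut-distance), I would conclude that $\diam(\bd V) \leq \diam([C,C']_\cB)$, and the latter is bounded: by Lemma~\ref{l.interval_connectedness} and Corollary~\ref{c.generator_ends}, an interval between consecutive cuts is contained in a single minimal generator (up to a $T$-translate), hence has diameter at most $2C+1$ by the first paragraph. Finally, since $V$ is a bounded component of $\R^2 \sm \cB$ with $\bd V \subseteq \cB$, we get $\ol V \subseteq$ the filled-in region bounded by $\bd V$, so $\diam V \leq \diam(\bd V) \leq M$ for a uniform $M$; and $V$ is a topological disk because it is a simply connected (being a complementary component of a continuum in the sphere/plane whose complement has only ``the'' two unbounded components and these bounded disk-like pieces) open subset of the plane — one checks simple connectedness from the fact that $\cB$ does not separate $V$ from itself, i.e. $\R^2 \sm V$ is connected, which follows because $\cB$ is connected and $W^-, W^+$ lie in $\R^2 \sm \cA \subseteq \R^2 \sm V$.

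The main obstacle I anticipate is the second step — rigorously showing that $\bd V$ cannot span many cuts. The separation argument needs care because $V$ lies ``alongside'' the strip rather than crossing it, so one cannot directly produce a crossing arc; instead the argument must go through the minimality of $\cB$ as a horizontally separating set, arguing that if $\bd V$ met two non-adjacent cuts then one could build a strictly smaller horizontally separating set by rerouting through $\ol V$, contradicting Lemma~\ref{l.minimal_strips}. Getting the uniform constant $M$ independent of $V$ then reduces entirely to the uniform bound on diameters of minimal generators established in the first paragraph, which is the robust part of the argument.
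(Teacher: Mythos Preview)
Your worry about step~2 is justified: the argument you sketch there does not work. The minimality invoked via Lemma~\ref{l.minimal_strips} says that $\cB$ is minimal among horizontally separating sets, meaning that no proper closed subset \emph{of $\cB$} is horizontally separating. Your candidate set $\cL_{-1}(G)\cup \ol V\cup \cR_1(G')$ contains $V\subset\cA\sm\cB$, so it is not a subset of $\cB$, and no contradiction follows. Appealing instead to minimality of $\cA$ as a strip does not help either, since your set is only horizontally separating, not a strip (its complement need not have exactly two components). There is a second issue: you begin step~2 by writing ``since $V$ is a bounded component'', but horizontal boundedness of $V$ is precisely what the lemma asserts; this is circular unless you first argue separately that every component of $\inter A$ is inessential in $\A$. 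Finally, note that the statement you are trying to establish in step~2 (that $\bd V$ is trapped between nearby cuts) is essentially Lemma~\ref{l.disk_boundary}, which in the paper is placed \emph{after} the present lemma and uses boundedness of the components in its own proof --- so you would have to reprove it independently here, not cite the same circle of ideas.

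The paper's argument is quite different and avoids cuts, minimal generators, and the decomposition $B=X\cup Y$ altogether. It fixes any generator $G$ with $\pi_1(G)=[a,b]$, chooses $N$ with $a+N>b$, sets $M$ in terms of $N$ and the vertical extent of $\cA$, and supposes some component $U$ satisfies $\pi_1(U)\supset [a-N,b+N]$ after a suitable deck translation. One then draws a simple arc $K\subset U$ crossing the slab $S=[a-N,b+N]\times\R$ from one vertical side to the other. Since $K\cap\cB=\emptyset$, the connected set $\bigcup_{|k|\le N}T^kG\subset S$ lies entirely in one component of $S\sm K$, say $S^-$. But the ``highest'' point $z$ of $\cB$ on the line $\{a\}\times\R$ is then forced into $S^+$; yet $z$ must lie in some $T^nG$, and the containment $T^kG\subset S^-$ for $|k|\le N$ forces $|n|>N$, contradicting $a\in\pi_1(T^nG)$. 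This uses only Lemma~\ref{l.gen} (to know $G\cap TG\neq\emptyset$, so that the union of consecutive translates is connected) and nothing about the finer cut structure.
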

\begin{proof}
  Let $G$ be a generator of $\cB$, and suppose $\pi_1(G)=[a,b]$. Let
  $N\in \N$ be such that $a+N>b$. Since $\pi_2(G) = \pi_2(\cA)$, the
  latter set has diameter bounded by some constant $c$. let
  $M=2N+c+1$. If $U$ is a connected component of $\cA\sm \cB$ with
  $\diam(U)>M$, then $\diam(\pi_1(U)) > 2N+1$ and we may assume
  $[a-N,b+N]\subseteq \pi_1(U)$ replacing $U$ by $T^iU$ for an
  appropriate $i\in \Z$. Thus there is a simple arc
  $\gamma\colon[0,1]\to U$ such that $\pi_1(\gamma(0))=a-N$,
  $\pi_1(\gamma(1))=b+N$, and $a-N<\gamma(t)<b+N$ for $0<t<1$. If $K=
  \gamma([0,1])$ and $S:=[a-N, b+N]\times \R$, we have that $S\sm K$
  has exactly two connected components $S_+$ and $S_-$, the former
  unbounded above and the later unbounded below. The set
  $G'=\bigcup_{k=-N}^N T^kG$ being connected, disjoint from $K$ and
  contained in $S$, must lie entirely in $S^-$ or $S^+$. Suppose
  without loss of generality that $G'\subseteq S^-$. Note that
  $K\subseteq U$ and $U$ is is bounded above. If $y$ is the smallest
  real such that $\{a\}\times [y,\infty) \subseteq \R^2\sm \cB$, then
  $\{a\}\times [y,\infty)$ is disjoint from $K$ and thus contained in
  $S^+$, and since $z:=(a,y)\in \bd \cB$ there must exist $n$ such
  that $z\in T^nG$. Since $T^kG\subseteq S^-$ when $|k|\leq N$, it
  follows that $n>N$, but this is not possible since $\pi_1(T^nG)
  \subseteq [a+N, \infty) \subseteq (b,\infty)$.
\end{proof}

Given a continuum $S\ssq \R^2$, the complement $\R^2\smin S$ consists
of one unbounded component and a union of topological disks. We denote
the unbounded component by $U_\infty(S)$ and the family of disks by
$\cU_S$ and let $S^{\filled}=\R^2\smin U_\infty(S)=
S\cup\bigcup_{U\in\cU_S}U$. Note that $S^\filled$ is a nonseparating
continuum.

\begin{lem} \label{l.disk_boundary} Suppose $U$ is a connected
  component of $\cA\smin \cB$ and $C^-_0\prec C^+_0$ are cuts such
  that $(C^-_0,C^+_0)_\cB\cap\partial U\neq \emptyset$. If $C^-,C^+$
  are cuts with $C^-\prec C^-_0\prec C^+_0\prec C^+$, then $\bd{U}\ssq
  [C^-,C^+]_\cB$.
\end{lem}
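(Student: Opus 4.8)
The plan is to argue by contradiction, and by the left–right symmetry of the statement it suffices to rule out the existence of a point $z\in\bd U\cap\cL(C^-)$. Since $C^-\prec C_0^-$ we have $(C_0^-,C_0^+)_\cB\ssq\cR(C^-)\cap\cR(C_0^-)$, so $\bd U$ also contains a point $w\in\cR(C_0^-)$. Two ingredients will be used repeatedly. First, $\ol U$ is a continuum with $\ol U\cap\cB=\bd U$ (this uses only that $U$ is a connected component of the open set $\cA\sm\cB$), and by Lemma \ref{l.component_bounded} $U$ is moreover a bounded open disk with $\diam(\ol U)\le M$, whose frontier $\bd U$ is a continuum. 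Second, a \emph{cut–trapping} principle: if $D=G\cap T^{-1}G$ is a cut, then by Lemma \ref{l.gen_inter} the dense subsets $G\sm TG$ of $G$ and $T^{-1}G\sm G$ of $T^{-1}G$ lie, respectively, in $\wh\cR(D)\sm D=\cR(D)$ and in $\wh\cL(D)\sm D=\cL(D)$, so $\cL(D)$ and $\cR(D)$ are relatively open in $\cB$ with common frontier $D$. Consequently a connected subset of $\cB$ meeting both $\cL(D)$ and $\cR(D)$ must meet $D$; and, more generally, for cuts $D\prec D'$, a connected subset of $\cB$ meeting $\cL(D)$ and $\cR(D')$ cannot avoid $(D,D')_\cB$ (otherwise it would split into the relatively clopen parts contained in $\wh\cL(D)$ and in $\wh\cR(D')$), and in fact it meets every cut in $[D,D']$.

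Next I would set up an extremal argument. Fix the minimal generator $G^-$ with $C^-=G^-\cap T^{-1}G^-$, and write $D_k:=T^k(C^-)=T^kG^-\cap T^{k-1}G^-$ for the cuts between consecutive copies $T^kG^-$, so $D_0=C^-$ and $[D_k,D_{k+1}]_\cB=T^kG^-$. Let $j$ be the least integer with $\bd U\cap T^jG^-\neq\emptyset$; this exists because $\bd U$ is bounded, and $j\le -1$ because $z\in\cL(C^-)\ssq\bigcup_{k\le -1}T^kG^-$. By minimality of $j$ we get $\bd U\cap T^{j-1}G^-=\emptyset$, hence $\bd U\ssq\cR_j(G^-)=\wh\cR(D_j)$; letting $m$ be the greatest such integer, symmetrically $\bd U\ssq\cL_m(G^-)$, and therefore $\bd U\ssq[D_j,D_{m+1}]_\cB=\bigcup_{j\le k\le m}T^kG^-$. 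Here $m\ge 0$, since $w\succ C_0^-\succ D_0$ lies to the right of the right cut $D_0$ of $T^{-1}G^-$ and hence in some $T^kG^-$ with $k\ge 0$. Applying the cut–trapping principle to $\bd U$ (which meets $\cL(C^-)\ni z$ and $\cR(C_0^-)\ni w$, and is connected), one sees that $\bd U$ meets every cut $D_k$ with $j\le k\le m$, and — for the copies strictly interior to this range — also meets their interiors $(D_k,D_{k+1})_\cB$; combined with $T$–equivariance this forces $\bd U$ to contain a dense subset, hence all, of some copy $T^{k_0}G^-$ of the minimal generator $G^-$.

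It remains to derive a contradiction from the fact that the frontier of a single bounded complementary disk $U$ contains a whole copy of a minimal generator of $\cB$. The line I would pursue is: $\bd U\supseteq T^{k_0}G^-$ makes $\bd U$ itself a generator of $\cB$, so $\cB=\bigcup_{k}T^k(\bd U)$ is the $T$–orbit of a bounded set; then the closed connected set obtained by "rerouting" $\cB$ through $\bd U$ — replacing the intermediate copies $T^{j+1}G^-,\dots,T^{m-1}G^-$ by the bridge they fit inside $\bd U$, i.e. taking $\cL_j(G^-)\cup\bd U\cup\cR_m(G^-)$ — is horizontally separating (here the connectedness of the intervals from Lemma \ref{l.interval_connectedness} and of the complements in Lemma \ref{l.strip_sep} are the tools) yet strictly smaller than $\cB$, contradicting the minimality of the coplane boundary via Lemma \ref{l.minimal_strips}; then the symmetric argument with $C^+$ in place of $C^-$ finishes the proof. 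The step I expect to be the main obstacle is exactly this last one: one must verify that the rerouted set is genuinely horizontally separating and genuinely a proper subset of $\cB$, ruling out that $\bd U$ is "wide enough" to supply the missing copies of $G^-$ on its own — and it is precisely the combination of the cut–trapping principle with the uniform diameter bound $\diam(\ol U)\le M$ of Lemma \ref{l.component_bounded} that makes this possible.
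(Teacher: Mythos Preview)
Your argument has a genuine gap at the step where you assert that meeting each cut $D_k$ and each open interval $(D_k,D_{k+1})_\cB$ ``forces $\bd U$ to contain a dense subset, hence all, of some copy $T^{k_0}G^-$.'' The cut--trapping principle is correct and does show that the connected set $\bd U$ \emph{intersects} every $D_k$ and every $(D_k,D_{k+1})_\cB$ in the range $j\le k\le m$, but intersecting these sets is far weaker than containing any of them. There is no $T$--equivariance to exploit: $\bd U$ is the frontier of a single complementary disk and carries no $T$--invariance whatsoever. A continuum in $\cB$ can perfectly well pass through several consecutive translates $T^kG^-$, touching each one in a small piece, without containing any entire translate. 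Without the containment $\bd U\supseteq T^{k_0}G^-$, your ``rerouting'' contradiction in the final paragraph never gets off the ground; and even granting that containment, you have not argued why $\cL_j(G^-)\cup\bd U\cup\cR_m(G^-)$ should be horizontally separating --- the frontier of a bounded disk does not by itself separate the plane.

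The paper's proof is quite different and avoids any structural analysis of $\bd U$. One chooses $n$ large enough that $\bd U\ssq[T^{-n}C^-,T^nC^-]_\cB$, sets $L=[T^{-n}C^-,C_0^-]_\cB$ and $R=[C^-,T^nC^-]_\cB$, and notes that $L$, $R$ and $L\cap R=[C^-,C_0^-]_\cB$ are all continua (Lemma~\ref{l.interval_connectedness}). Since $\bd U$ meets $(C_0^-,C_0^+)_\cB\not\ssq L$ and $\cL(C^-)\not\ssq R$, the disk $U$ lies in $U_\infty(L)\cap U_\infty(R)$; because $L\cap R$ is connected, Lemma~\ref{l.component_separation} gives $U\ssq U_\infty(L\cup R)$, contradicting the fact that $U$ is bounded with $\bd U\ssq L\cup R$. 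This plane--topological separation argument is the idea your approach is missing.
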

\begin{proof}
  Suppose for a contradiction that $\bd{U}\not\subseteq[C^-,C^+]_\cB$.
  Assume without loss of generality that $\bd{ U}\cap \cL(C^-) \neq
  \emptyset$. Since $U$ is bounded, there is $n>0$ such that $\bd
  U\subseteq [T^{-n}C^-, T^n C^-]_\cB$, and we may assume $T^nC^-\prec
  C_0^-$ and $C_0^+\prec T^nC^-$. The sets $L = [T^{-n}C^-,
  C_0^-]_{\cB}$ and $R=[C^-, T^nC^-]_{\cB}$ are connected, and $L\cap
  R = [C_0^-, C^-]_{\cB}$ is also connected (by Lemma
  \ref{l.interval_connectedness}). Since $\bd U$ is not contained in
  either set $L$ or $R$, we have that $U\subseteq U_\infty(L)\cap
  U_\infty(R)$. But then by Lemma \ref{l.component_separation} we have
  that $U\subseteq U_\infty(R\cup L)$, contradicting the fact that $U$
  is bounded and $\bd U\subseteq L\cup R$.
\end{proof}

\section{Dynamical linearisation: Proof of Theorem~\ref{t.poincare}}

Throughout this section, we assume that $\varphi\colon \A\to \A$ is a homeomorphism homotopic to the identity and $A$ is a $\varphi$-invariant circloid with decomposable boundary. We let $B=\partial B$ and denote the
lifts of $A,\varphi$ and $B$ by $\cA,\Phi$ and $\cB$, respectively. The next result generalizes \cite[Theorem 2.7]{BargeGillette1991CofrontierRotationSets} to circloids.

\begin{lem} The rotation number $\rho(\Phi,\cA) = \lim_{n\to \infty}
  \pi_1(\Phi^n(z)-z)/n$ exists and is independent of $z\in \cA$.
\end{lem}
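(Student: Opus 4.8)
The plan is to reduce the statement to points of $\cB=\pi^{-1}(\partial A)$, where the orbit structure can be read off from the dynamics induced by $\Phi$ on the family $\cC$ of cuts, which behaves like the lift of a monotone circle map.

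\emph{Reduction to $\cB$.} Since $\varphi(A)=A$ we have $\Phi(\cB)=\cB$, and $\Phi$ permutes the connected components of $\cA\smin\cB$, each of diameter at most the constant $M$ of Lemma~\ref{l.component_bounded}. Given $z\in\cA$, let $U$ be the component of $\cA\smin\cB$ containing it (a single point, if $z\in\cB$); by Lemma~\ref{l.toplemma} the closure $\ol U$ meets $\cB$, so we may pick $z'\in\cB$ with $|z-z'|\le M$, and since $\Phi^n z,\Phi^n z'\in\ol{\Phi^n(U)}$ we also get $|\Phi^n z-\Phi^n z'|\le M$ for every $n$. Hence $|\pi_1(\Phi^n z-z)-\pi_1(\Phi^n z'-z')|\le 2M$, and it is enough to treat $z\in\cB$.

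\emph{A cut coordinate and the induced dynamics.} Fix a minimal generator $G_0$ of $\cB$ (Lemma~\ref{l.xy}), let $C_0=G_0\cap T^{-1}G_0$ be its cut, and write $\pi_1(G_0)=[a_0,b_0]$. By Corollary~\ref{c.generator_ends} the orbit $\{T^kC_0\}_{k\in\Z}$ is cofinal and coinitial in $(\cC,\prec)$, and $\cB=\cL(C)\sqcup C\sqcup\cR(C)$ for every cut $C$; thus for each $z\in\cB$ (and for each cut) there is a unique $\nu(z)\in\Z$ with $T^{\nu(z)}C_0\preccurlyeq z\prec T^{\nu(z)+1}C_0$. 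One checks that $\nu(Tz)=\nu(z)+1$ and, using $\pi_1(T^kG_0)=[a_0+k,b_0+k]$, that $|\nu(z)-\pi_1(z)|\le D_0:=\max(|a_0|,|b_0|)$ for all $z\in\cB$. Now $\Phi$ permutes minimal generators, hence acts on $\cC$ (the cut of $G$ going to that of $\Phi(G)$); write $\Phi_*$ for this action. One has $\Phi^n(\wh\cL(C))=\wh\cL(\Phi_*^nC)$, hence $z\prec C\iff\Phi^n z\prec\Phi_*^nC$ (and likewise for $\preccurlyeq$), and $\Phi_*^nT=T\Phi_*^n$ on $\cC$. Pushing the defining inequalities for $\nu(z)$ through $\Phi^n$ one obtains
\[ \nu(\Phi^n z)-\nu(z)\in\{c_n,\,c_n+1\}\qquad(z\in\cB),\qquad c_n:=\nu(\Phi_*^nC_0), \]
and the same computation applied to the cut $\Phi_*^mC_0$ in place of $z$ gives $c_{n+m}-c_n-c_m\in\{0,1\}$. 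Thus $c_n+1$ is subadditive and $c_n$ is superadditive, so $\til\rho:=\lim_n c_n/n$ exists with $n\til\rho-1\le c_n\le n\til\rho$; comparing the functions $\nu$ attached to two different minimal generators (which differ by a bounded amount) shows $\til\rho$ does not depend on $G_0$.

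\emph{Conclusion and the main obstacle.} Combining the above, for $z\in\cB$ one has $|\pi_1(\Phi^n z-z)-n\til\rho|\le|\nu(\Phi^n z)-\nu(z)-n\til\rho|+2D_0\le 1+2D_0$, so $\pi_1(\Phi^n z-z)/n\to\til\rho$; by the first step this holds for every $z\in\cA$. I expect the main difficulty to lie in the bookkeeping of the second step: verifying that $\Phi$ genuinely respects the whole cut apparatus ($\wh\cL$, $\wh\cR$, the order $\prec$, the $T$-action) and, crucially, that the ``slab displacement'' $\nu(\Phi^n z)-\nu(z)$ is independent of $z$ up to an additive error $1$, so that the combinatorial data $(c_n)$ is almost additive. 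Once that is established, what remains is the standard Fekete/Poincaré limit argument together with the a priori diameter bounds of Section~\ref{Preliminaries}.
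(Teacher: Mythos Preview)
Your proof is correct and takes a genuinely different route from the paper's. The paper first invokes the Birkhoff Ergodic Theorem (for some $\varphi|_B$-invariant probability measure) to obtain existence of the limit at a single point $z_0\in\cB$, then observes via Corollary~\ref{c.generator_ends} that for any minimal generator $G\ni z_0$ one has $\Phi^n(G)\subset T^k(G\cup TG)$ for some $k$, whence $\diam(\Phi^n(G))\le 2\diam(G)$ uniformly in $n$; this immediately propagates the limit from $z_0$ to all of $G$ and hence to $\cB$, and the passage from $\cB$ to $\cA$ via Lemma~\ref{l.component_bounded} is the same as your first step. Your approach instead reconstructs a Poincar\'e-style integer coordinate $\nu$ from the cut structure and derives existence from the quasi-additivity $c_{n+m}-c_n-c_m\in\{0,1\}$, with no appeal to invariant measures. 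This is more elementary and delivers the uniform deviation bound $|\pi_1(\Phi^n z-z)-n\til\rho|\le 1+2D_0$ as a byproduct; the paper's argument yields an analogous bound only implicitly, via the generator-diameter estimate. Your stated ``main obstacle'' is not really one: since $\wh\cL(C)=\bigcup_{k\le -1}T^kG$ and $\Phi T=T\Phi$, the identity $\Phi(\wh\cL(C))=\wh\cL(\Phi_*C)$ (and hence order-preservation and the displacement estimate) is immediate, and the rest is the bookkeeping you already carried out.
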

\begin{proof}
  The fact that $\rho = \lim_{n\to \infty} \pi_1(\Phi^n(z_0)-z_0)/n$
  exists for some $z_0\in \cB$ follows from the Birkhoff Ergodic
  Theorem and the existence of an invariant measure for $\varphi|_B$,
  since $\pi_1(\Phi^n(z_0)-z_0)/n$ is a Birkhoff average for the
  function $B \ni x\mapsto \pi_1(\Phi(x') - x')\in \R$, where $x'\in
  \pi^{-1}(x)$ is arbitrary.

  Fix a minimal generator $G$ of $\cB$ containing $z_0$. Since
  $\Phi^n(G)$ is also a minimal generator, by Corollary
  \ref{c.generator_ends} there exists $k$ such that $\Phi^n(G)\subset
  T^k(G\cup TG)$, so $\diam(\Phi^n(G)) \leq 2\diam(G):=M$ for all
  $n\in\N$. This implies
  $\lim_{n\to\infty}\pi_1\left(\Phi^n(z)-z\right)/n=\rho$ for all $z\in G$,
    and since $G$ is a generator the same holds for all $z\in\cB$.
    Finally, we deduce from Lemma \ref{l.component_bounded} that the
    same property holds for $z\in \cA$.
\end{proof}

\begin{lem} $\rho(\Phi,\cA)=p/q\in \Q$ if and only if there exists
  $z\in \cA$ such that $\Phi^q(z)=T^p z$.
\end{lem}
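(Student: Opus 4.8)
Here is the approach I would take.

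The ``if'' direction is straightforward. If $\Phi^q(z)=T^pz$ for some $z\in\cA$, then $\Phi^{nq}(z)=T^{np}z$ for every $n\in\N$ because $\Phi$ commutes with $T$, so $\pi_1(\Phi^{nq}(z)-z)/(nq)=p/q$; since the previous lemma guarantees that the limit $\rho(\Phi,\cA)=\lim_{n}\pi_1(\Phi^{n}(z)-z)/n$ exists and does not depend on $z$, it must equal $p/q$.

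For the converse I would reduce to the case of rotation number $0$. Assume $\rho(\Phi,\cA)=p/q$ and put $\Psi:=T^{-p}\Phi^q$. Then $\Psi$ is a lift of $\varphi^q$ that is homotopic to the identity, commutes with $T$, leaves $\cA$ and $\cB$ invariant, and is orientation-preserving; moreover, by the previous lemma applied to $\varphi^q$, $\rho(\Psi,\cA)=\rho(\Psi,\cB)=q\,\rho(\Phi,\cA)-p=0$. It now suffices to find a fixed point of $\Psi$ in $\cA$, and for that it is enough to exhibit a $\Psi$-invariant subcontinuum $K\ssq\cA$ that does not separate $\R^2$: since $\Psi\colon\R^2\to\R^2$ is an orientation-preserving homeomorphism, the Cartwright--Littlewood fixed point theorem~\cite{cartwright-littlewood} would then yield a fixed point of $\Psi$ in $K$, hence in $\cA$.

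To construct such a $K$ I would exploit the cut machinery of Section~\ref{Preliminaries}. Fix a minimal generator $G_0$ of $\cB$ with cut $C_0=G_0\cap T^{-1}G_0$. As $\Psi$ commutes with $T$, it maps minimal generators of $\cB$ to minimal generators, maps cuts to cuts, and preserves the total order $\prec$ on $\cC$. By Corollary~\ref{c.generator_ends}, for every $n$ there is a unique $a_n\in\Z$ with $T^{a_n}C_0\preccurlyeq\Psi^{n}(C_0)\prec T^{a_n+1}C_0$; applying $\Psi^n$ to the relation defining $a_m$ and using $T$-equivariance and order-preservation gives $a_n+a_m\le a_{n+m}\le a_n+a_m+1$, and since $\pi_1(\Psi^{n}(z))-a_n$ is bounded uniformly in $n$ for $z\in C_0$ one gets $a_n/n\to\rho(\Psi,\cB)=0$. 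Superadditivity of $(a_n)$ then forces $a_n\le0$ for all $n$; running the same argument for $\Psi^{-1}$ and combining it with $C_0=\Psi^{n}(\Psi^{-n}(C_0))$ forces $a_n\ge-1$. Hence $a_n\in\{-1,0\}$ for all $n\in\Z$, so the full $\Psi$-orbit of $C_0$, and therefore of every point of $C_0$, stays in the compact continuum $[T^{-1}C_0,TC_0]_\cB=T^{-1}G_0\cup G_0$. If $\Psi(C_0)=C_0$ then, using $G_0=[C_0,TC_0]_\cB$, one gets $\Psi(G_0)=G_0$ and takes $K=G_0^{\filled}=\R^2\smin U_\infty(G_0)$. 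Otherwise, say $\Psi(C_0)\succ C_0$ (the opposite case being symmetric): then the sets $\wh\cR(\Psi^{n}(C_0))$ decrease with $n$, and their intersection $R_\infty$ is a proper, closed, $\Psi$-invariant subset of $\cB$ with $\cR_1(G_0)\ssq R_\infty\ssq\cR_0(G_0)$; its topological boundary $\Gamma$ inside $\cB$ is then a nonempty $\Psi$-invariant compact set contained in the single translate $G_0$, and I would check---via the minimal-strip arguments of Lemmas~\ref{l.strip_sep} and~\ref{l.interval_connectedness}---that $\Gamma$ is connected, so that $K=\Gamma^{\filled}=\R^2\smin U_\infty(\Gamma)$ is legitimate. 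In either case $K$ is a non-separating continuum with $\Psi(K)=K$, and $K\ssq\cA$ because every bounded connected component of the complement of a subset of $\cB$ lies in $\cA$ (the two components of $\R^2\smin\cA$ being unbounded and disjoint from $\cB$).

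The step I expect to be the main obstacle is exactly this construction of $K$: passing from ``$\rho(\Psi)=0$'' to ``$\Psi$ fixes a bounded subcontinuum of $\cA$''. This is the analogue, in the present setting, of the elementary fact that an orientation-preserving circle homeomorphism with rotation number $0$ has a fixed point; the difficulty is that the ordered set of cuts $\cC$ need not be order-complete, so a priori $\Psi$ may fix only a ``Dedekind gap'' of $\cC$ rather than an honest cut, and care is needed to distil an actual invariant continuum (the set $\Gamma$ above) from such a gap and to prove that it is connected. The passage from the cobasin boundary $\cB$ to the circloid $\cA$, by contrast, is routine and rests on Lemma~\ref{l.component_bounded}.
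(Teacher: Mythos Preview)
Your overall strategy matches the paper's: reduce to rotation number zero via $\Psi=T^{-p}\Phi^q$, produce a $\Psi$-invariant nonseparating continuum $K\subset\cA$, and invoke a Cartwright--Littlewood type fixed-point theorem. The crucial estimate $a_n\in\{-1,0\}$ (equivalently: every iterate of a minimal generator stays within two adjacent translates of the original) is exactly the heart of the paper's argument as well.

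Where you diverge is in the construction of $K$, and here you make life harder than necessary. Once you know that $\Psi^n(G_0)\subset T^{-1}G_0\cup G_0\cup TG_0$ for all $n$ (which is what $a_n\in\{-1,0\}$ says, via Corollary~\ref{c.generator_ends}), and that $\Psi^n(G_0)\cap G_0\neq\emptyset$ for all $n$ (a minimal generator contained in two adjacent translates must meet both, by minimality), the set $K_0=\overline{\bigcup_{n\in\Z}\Psi^n(G_0)}$ is automatically a bounded continuum in $\cB$ with $\Psi(K_0)=K_0$, and $K=K_0^\filled\subset\cA$ does the job. This is precisely what the paper does, citing Bell's extension of Cartwright--Littlewood (the original suffices here since $\Psi$ preserves orientation).

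Your route through the ``Dedekind gap'' --- taking $R_\infty=\bigcap_n\wh\cR(\Psi^n(C_0))$ and setting $\Gamma=\partial_\cB R_\infty$ --- is not wrong in spirit, but the step you flag as the main obstacle really is one: the connectedness of $\Gamma$ does not follow from Lemmas~\ref{l.strip_sep} or~\ref{l.interval_connectedness} in any direct way. Those lemmas tell you that $R_\infty$ and $L_\infty=\overline{\cB\setminus R_\infty}$ are each connected, but say nothing about their intersection. The nested intersection $R_\infty\cap G_0=\bigcap_n[\Psi^n(C_0),TC_0]_\cB$ \emph{is} a continuum by Lemma~\ref{l.interval_connectedness}, but it is not $\Psi$-invariant, since $\Psi$ moves the right endpoint $TC_0$ strictly to the right. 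So as written your argument has a genuine gap at exactly the point you anticipated, and the paper's more direct construction via the orbit closure of $G_0$ sidesteps the issue entirely.
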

\begin{proof}
  The if-part is trivial. For the other implication, note that it is
  easy to verify that $\rho(T^{-p}\Phi^q,\cA) = 0$ if and only if
  $\rho(\Phi,\cA) = p/q$, so it suffices to assume that
  $\rho(\Phi,\cA)=0$ and show that there is a fixed point in $\cA$.
  Fix a minimal generator $G$ of $\cB$. We claim that $\Phi^n(G)\cap
  G\neq \emptyset$ for all $n$. Indeed, fix a positive integer $n$. Corollary
  \ref{c.generator_ends} implies that $\Phi^n(G)\subset T^kG\cup
  T^{k+1}G$ for some $k$. If $k>0$, then $\Phi^n(G)\subset \cR_1(G)$
  which then implies $\Phi^{nk}(G)\subset \cR_k(G)$ and this implies
  that $\pi_1(\Phi^{nk}(z)-z)/{nk} \geq 1/n$ for all $k\in \N$ and $z\in G$,
  contradicting our assumption. If $k< -1$, we get a similar
  contradiction. Thus $k\in \{0,-1\}$, and since $G$ and $\Phi^n(G)$
  are minimal generators, $\Phi^n(G)$ must intersect both $T^kG$ and
  $T^{k+1}G$, so $\Phi^n(G)\cap G\neq \emptyset$ as claimed. Thus $K = \ol{\bigcup_{n\in \Z}
    \Phi^n(G)}\subset \cB$ is connected and bounded (again due to
  Corollary \ref{c.generator_ends}), and $\Phi(K)=K$. Moreover,
  $K^\filled\subset \cA$ and $\Phi(K^\filled)=K^\filled$, so
  $K^\filled$ is a non-separating invariant continuum in $\R^2$ and the generalization of the 
  Cartwright-Littlewood theorem due to Bell \cite{bell} implies that $K^\filled$ contains a fixed point of $\Phi$.
\end{proof}

The previous lemma implies the first item from Theorem~\ref{t.poincare}. It also follows that if $\varphi|_A$ is monotonically semiconjugate to an irrational rotation then $\rho(\Phi, \cA)$ is irrational. Thus to complete the proof of the theorem it remains to prove that if $\rho = \rho(\Phi, \cA)$ is irrational, then $\varphi|_A$ is monotonically conjugate to the corresponding irrational rotation.  

For the remainder of this section we assume that $\rho$ is irrational, and we fix a minimal generator $G_0$ of $\cB$.  Given $x=n+k\rho$ in $Q(\rho)=\{l+\rho m\mid l,m\in\Z\}$,
we let $G_x = \Phi^k\circ T^n(G_0)$ and denote by $C_x=G_x\cap
T^{-1}(G_x)$ the cut corresponding to $G_x$. Recall that there is a linear ordering $\prec$ on the set $\cC$ of cuts as defined in the previous section.

\begin{lem}\label{l.generator_monotonicity}
  The mapping $(Q(\rho),<)\to(\cC, \prec),\ x\mapsto C_x$ is strictly monotonically
  increasing. In particular, $C_x\cap C_y=\emptyset$ if $x\neq y$.
  Moreover, the set $[C_x,C_y]_\cB$ is connected for all $x<y$,
  decreasing in $x$ and increasing in $y$.
\end{lem}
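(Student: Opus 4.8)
The plan is to prove the first assertion, that $x\mapsto C_x$ is strictly increasing for $\prec$; the remaining two statements then follow with little extra work. First I would record a few preliminary facts. Since $\rho$ is irrational the representation $x=n+k\rho$ with $n,k\in\Z$ is unique, so $G_x=\Phi^k T^n(G_0)$ is well defined, and since each of $T^{\pm1},\Phi^{\pm1}$ is a homeomorphism of $\R^2$ commuting with $T$ and preserving $\cB$, it carries minimal generators to minimal generators. Hence $G_x$ is a minimal generator, $T(C_x)=C_{x+1}$ and $\Phi(C_x)=C_{x+\rho}$, and the same computation shows that $T$ and $\Phi$ map cuts to cuts and \emph{preserve} the order $\prec$ (because $T(\cL(C'))=\cL(TC')$ and $\Phi(\cL(C'))=\cL(\Phi C')$). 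Next, $T$ shifts every cut strictly to the right: for a cut $C=G\cap T^{-1}G$ we have $C\ssq G\ssq\wh\cL(TC)$ while $C\cap TC=G\cap T^{-1}G\cap TG=\emptyset$ by Lemma~\ref{l.gen_inter}, so $C\ssq\cL(TC)$, i.e.\ $C\prec TC$; in particular $C_m\prec C_{m'}$ for integers $m<m'$. Finally, for $q\ne 0$ and any $p$ the map $\Psi=T^p\Phi^q$ has no fixed cut: cuts are bounded subsets of the plane (by Corollary~\ref{c.generator_ends} every minimal generator lies in two adjacent translates of a fixed one, which one may take bounded as in Lemma~\ref{l.xy}), so if $\Psi(C)=C$ then $C^{\filled}$ is a $\Psi$-invariant non-separating continuum contained in $\cA$ (a bounded complementary component of $C\ssq\cB$ cannot meet either unbounded complementary component of $\cB$), and Bell's generalisation of the Cartwright-Littlewood theorem (used above) produces a fixed point of $\Psi$ in $\cA$, forcing $\rho\in\Q$, which is absurd.

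The decisive ingredient is a ``localisation'' of the cuts $C_x$. By Corollary~\ref{c.generator_ends}, for each $k\in\Z$ there is an integer $j_k$ with $\Phi^k(G_0)\ssq G_{j_k}\cup G_{j_k+1}$; applying $T^{-1}$, intersecting, and using $G_{j+1}\cap G_{j-1}=\emptyset$ (Lemma~\ref{l.gen_inter}), one gets $\Phi^k(C_0)\ssq G_{j_k}$, and hence $C_{l+k\rho}=T^l\Phi^k(C_0)\ssq G_{l+j_k}$ for all $l\in\Z$. Writing $m(x):=l+j_k$ when $x=l+k\rho$, this gives $C_{m(x)}\preccurlyeq C_x\preccurlyeq C_{m(x)+1}$, so every $C_x$ lies between two consecutive integer translates of $C_0$. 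On the other hand the inclusion $\Phi^k(G_0)\ssq G_{j_k}\cup G_{j_k+1}$ yields $|\pi_1(\Phi^k(w)-w)-j_k|\le M_0$ for all $w\in\cB$, with $M_0$ independent of $k$; dividing by $k$, letting $k\to\pm\infty$ and using that the rotation number on $\cB$ equals $\rho$, we obtain $j_k/k\to\rho$, i.e.\ $j_k=\rho k+o(|k|)$. Hence for any $g=a+b\rho\in Q(\rho)$ with $g>0$ and any $x=l_0+k_0\rho$, the integer $m(x+ng)=(l_0+na)+j_{k_0+nb}$ equals $x+ng+o(n)$, so $m(x+ng)\to+\infty$ as $n\to\infty$.

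Now I would argue by contradiction. Since $\prec$ is a total order it is enough to exclude the existence of $x<y$ in $Q(\rho)$ with $C_y\preccurlyeq C_x$. Given such a pair, set $g=y-x=a+b\rho>0$ and $\Psi=T^a\Phi^b$, so that $\Psi(C_x)=C_y$. If $b=0$ then $g=a\ge 1$, and iterating $C\prec TC$ gives $C_x\prec C_{x+g}=C_y$, contradicting $C_y\preccurlyeq C_x$. If $b\ne 0$, then $\Psi$ has no fixed cut, so $C_y=\Psi(C_x)\prec C_x$; since $\Psi$ preserves $\prec$, the sequence $C_{x+ng}=\Psi^n(C_x)$ ($n\ge 0$) is strictly $\prec$-decreasing, and in particular $C_{x+ng}\preccurlyeq C_x$ for all $n$. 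But $C_{x+ng}\ssq G_{m_n}$ with $m_n=m(x+ng)\to+\infty$, so $C_{m_n}\preccurlyeq C_{x+ng}\preccurlyeq C_x$; while from $C_x\ssq G_{m(x)}$ we get $C_x\preccurlyeq C_{m(x)+1}\prec C_m$ for every $m\ge m(x)+2$. Choosing $n$ so large that $m_n\ge m(x)+2$ then yields $C_x\prec C_{m_n}\preccurlyeq C_x$, a contradiction. Therefore $C_x\prec C_y$ whenever $x<y$, and this also gives $C_x\cap C_y=\emptyset$ for $x\ne y$, since $C_x\ssq\cL(C_y)=\wh\cL(C_y)\sm C_y$.

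For the last assertion, connectedness of $[C_x,C_y]_\cB$ for $x<y$ is precisely Lemma~\ref{l.interval_connectedness} applied to the cuts $C_x\prec C_y$ (the closure of a connected set being connected). Monotonicity follows from the characterisation $[C_x,C_y]_\cB=\{z\in\cB\mid C_x\preccurlyeq z\preccurlyeq C_y\}$: if $x<x'$ then $C_x\prec C_{x'}$, so $C_{x'}\preccurlyeq z$ implies $C_x\preccurlyeq z$, whence $[C_{x'},C_y]_\cB\ssq[C_x,C_y]_\cB$, and symmetrically in the variable $y$. The step I expect to be the main obstacle is the localisation in the second paragraph --- coupling the asymptotics $j_k/k\to\rho$ with the exact inclusion $C_{l+k\rho}\ssq G_{l+j_k}$ --- since this is what converts the order-theoretic information on cuts into quantitative control matching the real number $x$, and it genuinely cannot be avoided: monotonicity along the two directions $x\mapsto x+1$ and $x\mapsto x+\rho$ alone does not force monotonicity on all of $Q(\rho)$, because the positive cone of $Q(\rho)$ is not generated by $1$ and $\rho$.
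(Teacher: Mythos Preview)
Your argument is correct, and the three parts (monotonicity, disjointness, connectedness/monotone dependence of the intervals) are all handled properly. The preliminary facts are sound; in particular your fixed-cut argument via Bell's theorem is valid, and the localisation $C_{l+k\rho}\ssq G_{l+j_k}$ with $j_k/k\to\rho$ is correctly derived from Corollary~\ref{c.generator_ends} and the rotation-number limit.

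However, the paper's proof is dramatically shorter and bypasses the localisation step entirely. Its key observation is this: from $C_{x'}\preccurlyeq C_x$ one deduces $\wh\cL(C_{x'})\ssq\wh\cL(C_x)$ (using that distinct cuts are disjoint and Lemma~\ref{l.generator_inequality}); since $\wh\cL(C_{x'})=\Psi(\wh\cL(C_x))$ and $\Phi,T$ commute, this gives $\Psi(\wh\cL(C_0))\ssq\wh\cL(C_0)$. But $\wh\cL(C_0)=\cL_{-1}(G_0)$ is bounded to the right, so every $\Psi$-orbit starting there has $\pi_1$ bounded above, contradicting $\rho(\Psi)=x'-x>0$. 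That is the entire argument.

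The difference is that the paper exploits the \emph{set} $\wh\cL(C_0)$ as a forward-invariant ``left half-line'' and reads off the contradiction directly from boundedness, whereas you translate everything into the integer sequence $(j_k)$ and the order of the integer cuts $C_m$, and then reassemble the rotation-number information from the asymptotics $j_k=k\rho+o(|k|)$. Your route works and is self-contained, but the machinery you build (no fixed cuts via Cartwright--Littlewood, the localisation, the $o(n)$ estimate) is not needed: the single inclusion $\Psi(\wh\cL(C_0))\ssq\wh\cL(C_0)$ already encodes the obstruction. Your remark that ``monotonicity along $x\mapsto x+1$ and $x\mapsto x+\rho$ alone does not force monotonicity on all of $Q(\rho)$'' is exactly right, and the paper's proof addresses this not by combining those two directions but by working directly with the composite $\Psi=\Phi^{k'-k}T^{n'-n}$ and its rotation number $x'-x$.
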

\proof Suppose that $x=n+k\rho<x'=n'+k'\rho$, but $C_{x'}\preccurlyeq
C_x$. Then $\Phi^{k'-k}\circ T^{n'-n}(\wh\cL(C_0))\ssq \wh\cL(C_0)$.
As a consequence, all orbits in $\wh\cL(C_0)$ under the lift
$\Psi=\Phi^{k'-k}\circ T^{n'-n}$ of $\varphi^{k'-k}$ are bounded to
the right, contradicting the fact that $\rho(\Psi)=x'-x>0$. This shows
the strict monotonicity of $x\mapsto C_x$ and the disjointness.
Connectedness of $[C_x,C_y]_\cB$ is given by
Lemma~\ref{l.interval_connectedness}.  \qed\medskip

As in the previous section, given $z\in\cB$ and a cut $C\in\cC$, we
write $z\prec C$ iff $z\in\cL(C)$ and $C\prec z$ iff $z\in\cR(C)$. In
order to extend this notion to all $z\in \cA$, note that $\cA\smin\cB$
is a union of bounded open topological disks whose boundary is
contained in $\cB$. Given $z\in\cA\smin\cB$, we denote the respective
disk containing $z$ by $U_z$ and write $z\prec C$ iff $\partial
U_z\ssq \cL(C)$ and $C\prec z$ iff $\partial U_z\ssq\cR(C)$.
Equivalently, $z\prec C$ iff $z\in\cL(C)^\filled$ and $C\prec z$ iff
$z\in\cR(C)^\filled$. Given a subset $S\ssq\cA$, we write $C\prec S$
iff $S\ssq\cL(C)^\filled$ and $S\prec C$ iff $S\ssq\cR(C)^\filled$.
Then, we define $H:\cA\to \R$ by
\begin{equation}
  H(z)\  = \  \sup\{x \in Q(\rho)\mid C_x
  \prec z\} \ .
\end{equation}
\begin{lem}
  The map $H$ is continuous and projects to a monotone semiconjugacy
  $h:A\to\kreis$ from $\varphi|_A$ to the irrational rotation by $\rho$.
\end{lem}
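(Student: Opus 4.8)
The plan is to check four things in turn: that $H$ is finite-valued; the equivariances $H\circ T=H+1$ and $H\circ\Phi=H+\rho$; continuity of $H$; and finally that $H$ descends to a \emph{monotone} surjection $h\colon A\to\kreis$ semiconjugating $\varphi|_A$ to $R_\rho$. Throughout I would use two standard consequences of $\cB$ being a coplane boundary (Lemma~\ref{l.minimal_strips}): for every cut $C$ one has $\wh\cR(C)^\filled\cap\cB=\wh\cR(C)$ and $\wh\cL(C)^\filled\cap\cB=\wh\cL(C)$, because $\cL(C)$ resp.\ $\cR(C)$ is connected, unbounded and disjoint from $\wh\cR(C)$ resp.\ $\wh\cL(C)$, hence lies in the unbounded complementary component; and consequently every bounded complementary component of $\wh\cR(C)$, of $\wh\cL(C)$ or of an interval $[C,C']_\cB$ is a single component $U$ of $\cA\sm\cB$, with $\bd U$ contained in the respective set. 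For well-definedness of $H$: by Lemma~\ref{l.generator_monotonicity} the map $x\mapsto C_x$ is strictly $\prec$-increasing, and since the sets $\wh\cR(C)^\filled$ nest monotonically with $\prec$, the set $\{x\in Q(\rho):C_x\prec z\}$ is an initial segment of $Q(\rho)$; it is nonempty and bounded above because $C_n\prec z$ for $n$ sufficiently negative and $z\prec C_n$ for $n$ sufficiently positive, which follows from the fact (established at the start of the section) that $\pi_1$ of every generator $\Phi^kT^nG_0$ is an interval of a fixed length.

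The equivariances are the easy part. From the definitions, $T^{-1}G_x=G_{x-1}$ and $\Phi G_x=G_{x+\rho}$, hence $T^{-1}C_x=C_{x-1}$ and $\Phi C_x=C_{x+\rho}$; since $T$ and $\Phi$ are orientation-preserving homeomorphisms of $\R^2$ commuting with $T$, they preserve the two ends and therefore carry the sets $\cR(C)^\filled,\cL(C)^\filled$ equivariantly. Thus $C_x\prec Tz\iff C_{x-1}\prec z$ and $C_x\prec\Phi z\iff C_{x-\rho}\prec z$, and after reindexing (using $Q(\rho)=Q(\rho)+1=Q(\rho)+\rho$) we obtain $H\circ T=H+1$ and $H\circ\Phi=H+\rho$. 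Granting continuity, the first relation lets $H$ descend to a continuous $h\colon A\to\kreis$, and the second gives $h\circ\varphi|_A=R_\rho\circ h$; since $A$ is compact, $h(A)$ is a closed $R_\rho$-invariant subset of $\kreis$, and as $\rho$ is irrational $R_\rho$ is minimal, so $h(A)=\kreis$.

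Continuity of $H$ is the heart of the matter. Upper semicontinuity is immediate, since for $t\in\R$ one has $\{z\in\cA:H(z)\ge t\}=\bigcap_{x\in Q(\rho),\,x<t}\bigl(\wh\cR(C_x)^\filled\cap\cA\bigr)$, an intersection of closed sets. For lower semicontinuity it is enough, by density of $Q(\rho)$, to show that for $x<x'$ in $Q(\rho)$ the set $\wh\cR(C_{x'})^\filled\cap\cA$ lies in the relative interior (in $\cA$) of $\wh\cR(C_x)^\filled\cap\cA$, since then $\{z\in\cA:H(z)>t\}=\bigcup_{x\in Q(\rho),\,x>t}\wh\cR(C_x)^\filled\cap\cA$ is open. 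Write $C:=C_x$, $C':=C_{x'}$ and interpolate cuts $C\prec C_1\prec C_2\prec C'$ (density of $Q(\rho)$). I would argue by contradiction: suppose $b_n\to b$ with $b_n\in\cA\cap U_\infty(\wh\cR(C))$ but $b\in\wh\cR(C')^\filled\cap\cA$. If $b$ lies in a component $U^*$ of $\cA\sm\cB$, then $b_n\in U^*$ eventually, forcing $U^*\subseteq U_\infty(\wh\cR(C))$ and hence $b\notin\wh\cR(C)^\filled\supseteq\wh\cR(C')^\filled$, absurd. So $b\in\cB$, hence $b\in\wh\cR(C')\subseteq\wh\cR(C_2)\sm C_2$; since $\cB\cap U_\infty(\wh\cR(C))=\cL(C)$, having infinitely many $b_n$ in $\cB$ would force $b\in\wh\cL(C)$, contradicting $\wh\cL(C)\cap\wh\cR(C_2)=\emptyset$. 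Thus for large $n$, $b_n$ lies in a component $U_n$ of $\cA\sm\cB$ with $U_n\subseteq U_\infty(\wh\cR(C))$; then $\bd U_n$ meets $\cL(C)$ (at some $p_n$), and if $\bd U_n\subseteq\wh\cL(C_2)$ then $U_n$ is a bounded complementary component of $\wh\cL(C_2)$, so $U_n\subseteq\wh\cL(C_2)^\filled$ — if this held for infinitely many $n$ we would get $b\in\wh\cL(C_2)^\filled\cap\cB=\wh\cL(C_2)$, impossible. Hence for large $n$ the connected set $\bd U_n$ meets both of the disjoint closed sets $\wh\cL(C_1)$ and $\wh\cR(C_2)$, so it meets $(C_1,C_2)_\cB$, and Lemma~\ref{l.disk_boundary} (with $C_0^-=C_1$, $C_0^+=C_2$ and $C^-=C\prec C_1$, $C^+=C'\succ C_2$) gives $\bd U_n\subseteq[C,C']_\cB$, contradicting $p_n\in\cL(C)$.

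Finally, monotonicity of $h$. Since $H$ is continuous and $T$-equivariant and $H^{-1}([t-1,t+1])\cap\cA$ is bounded (in $\pi_1$ by the ordering and in $\pi_2$ because $\cA$ is), the fibre $h^{-1}(\theta)$ is, for any lift $t$ of $\theta$, homeomorphic to the compact set $H^{-1}(\{t\})\cap\cA=\bigcap_{x<t<y,\ x,y\in Q(\rho)}\bigl(H^{-1}([x,y])\cap\cA\bigr)$. Each $H^{-1}([x,y])\cap\cA$ is a continuum: it is the union of $[C_x,C_y]_\cB^\filled\cap\cA$ — connected and compact by Lemma~\ref{l.interval_connectedness} and the diameter bound of Lemma~\ref{l.component_bounded} — together with those components $U$ of $\cA\sm\cB$ with $H(U)\in[x,y]$ but $\bd U\not\subseteq[C_x,C_y]_\cB$, and each such $U$ meets $[C_x,C_y]_\cB^\filled\cap\cA$ because $\bd U$ contains a point of $H$-value exactly $H(U)\in[x,y]$ (using upper semicontinuity of $H$). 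A decreasing intersection of continua being a continuum, $H^{-1}(\{t\})\cap\cA$ is connected, so all fibres of $h$ are connected. The main obstacle is the continuity argument of the third paragraph, and within it the control of the complementary disks of $\cB$; the decisive inputs are the uniform diameter bound (Lemma~\ref{l.component_bounded}), the confinement of disk boundaries (Lemma~\ref{l.disk_boundary}), and the connectedness of the intervals $[C,C']_\cB$ (Lemma~\ref{l.interval_connectedness}). I expect the point requiring most care to be the step asserting that each $\bd U_n$ is connected, so that Lemma~\ref{l.disk_boundary} can be invoked in the form used above.
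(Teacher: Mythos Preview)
Your equivariance and semiconjugacy arguments are fine and match the paper's. Your continuity argument is correct but takes a considerably longer route than the paper: rather than splitting into upper and lower semicontinuity and running a contradiction argument through sequences of complementary disks, the paper simply observes that
\[
H|_\cB^{-1}((a,b)) \;=\; \bigcup_{a<x'<y'<b,\ x',y'\in Q(\rho)} (C_{x'},C_{y'})_\cB
\]
is relatively open in $\cB$, and then for $z\in\cB\cap H^{-1}((a,b))$ uses Lemma~\ref{l.disk_boundary} once to confine $\partial U_{z'}$ inside $[C_{\tilde x},C_{\tilde y}]_\cB$ for any nearby $z'\in\cA\sm\cB$. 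This avoids the case analysis with the $b_n$'s, the appeal to connectedness of $\cL(C)$, and the connectedness of $\partial U_n$ that you flag at the end.

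There is, however, a genuine gap in your monotonicity argument. You write that $H^{-1}([x,y])\cap\cA$ is the union of $[C_x,C_y]_\cB^\filled\cap\cA$ with certain extra disks. This presumes $H^{-1}([x,y])\cap\cB=[C_x,C_y]_\cB$, which is false in general: for $x\in Q(\rho)$ one has $H(z)\ge x$ if and only if $z\in\bigcap_{x'<x}\wh\cR(C_{x'})$, and this intersection can be strictly larger than $\wh\cR(C_x)$ (nothing forces the orbit of cuts $\{C_{x'}:x'<x\}$ to accumulate onto $C_x$). Consequently the point $w\in\partial U$ with $H(w)=H(U)$ that you produce need not lie in $[C_x,C_y]_\cB$, so your connectedness argument breaks. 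The fix is exactly what the paper does: work directly with the single fibre and write
\[
H|_\cB^{-1}(\{t\}) \;=\; \bigcap_{x'<t<y',\ x',y'\in Q(\rho)} [C_{x'},C_{y'}]_\cB,
\]
a nested intersection of continua (Lemma~\ref{l.interval_connectedness}), hence a continuum. Then Lemma~\ref{l.disk_boundary} shows that for any $z\in H^{-1}(\{t\})\sm\cB$ one has $\partial U_z\subseteq H|_\cB^{-1}(\{t\})$, so $H^{-1}(\{t\})=\bigl(H|_\cB^{-1}(\{t\})\bigr)^\filled$ is a continuum. (Your justification ``using upper semicontinuity'' for $H(\partial U)\ni H(U)$ is also not quite right; the correct reason is that $H$ is constant on $\overline U$, which is precisely what Lemma~\ref{l.disk_boundary} gives.)
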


\begin{proof}
We first show
the continuity of the restriction of $H$ to $\cB$. By definition, we have that
\begin{equation} H_{|\cB}^{-1}((x,y))\ =\ \bigcup_{\stackrel{x',y'\in
    Q(\rho)}{x<x'<y'<y}} (C_{x'},C_{y'})_\cB \ = \ \bigcup_{\stackrel{x',y'\in
    Q(\rho)}{x<x'<y'<y}} [C_{x'},C_{y'}]_\cB \ .
\end{equation}
Since the sets $(C_{x'},C_{y'})_\cB$ are relatively open in $\cB$, this shows that
preimages of open sets are open, so that $H_{|\cB}$ is continuous. 

In order to see that $H$ is continuous on all of $\cA$, fix $x<y$ and $z\in
H^{-1}((x,y))$. It suffices show to that $H^{-1}((x,y))$ contains a neighbourhood of
$z$. If $z\in \cA\smin\cB$, then by definition the whole open disk $U_z$ is
contained in $H^{-1}((x,y))$. (Note that if $C_x\prec z$, then $C_x\prec z'$ for
all $z'\in U_z$.) Thus, suppose that $z\in\cB$. Since $H_{|\cB}$ is continuous,
there exists $\eps>0$ such that $B_\eps(z)\cap \cB\ssq H^{-1}((x,y))$. If $z'\in
B_\eps(z)\cap(\cA\smin\cB)$, then $B_\eps(z)$ intersects both $\cB$ and
$U_{z'}$. Consequently, $B_\eps(z)$ intersects $\partial U_{z'}$ and we have
$\emptyset \neq \partial U_{z'}\cap B_\eps(z)\ssq H^{-1}((x,y))$. However, this
implies that for some $x',y'$ with $x<x'<y'<y$ we have $\partial U_{z'}\cap
(C_{x'},C_{y'})_\cB\neq \emptyset$. Therefore Lemma~\ref{l.disk_boundary} yields that
$\partial U_{z'}\ssq [C_{\tilde x},C_{\tilde y}]_\cB$ for any $\tilde x,\tilde y$
with $x<\tilde x<x'<y'<\tilde y<x$ and thus $U_{z'}\ssq
H^{-1}((x,y))$. Altogether, we obtain $B_\eps(z)\cap\cA\ssq H^{-1}((x,y))$, which
proves the continuity of $H$ on $\cA$.
 
In order to show the further statements, note that since by definition
$\Phi(G_x)=G_{x+\rho}$ and $T(G_x)=G_{x+1}$, the same relations hold for $C_x$
and $\cL(C_x)$. Using these facts, it is easy to check that $H$ is a
semiconjugacy from $\Phi_{|\cA}$ to the translation $x\mapsto x+\rho$ on $\R$
and that $H$ commutes with the deck translation $T$, such that $H$ projects to a
semiconjugacy $h$ from $\varphi_{|A}$ to the rotation $R_\rho$.

It remains to prove the monotonicity of $h$, which will follow immediately from
that of $H$.  We have that
\begin{equation}
  H_{|B}^{-1}(x) \ =\  \bigcap_{\stackrel{x',y'\in Q(\rho)}{x'<x<y'}} [C_{x'},C_{y'}] \ .
\end{equation}
This can be seen as a nested intersection of continua and is therefore
a continuum itself. The full fibre $H^{-1}(x)$ is obtained by adding
the union $\bigcup_{z\in H^{-1}(x)\smin\cB} U_z$ of topological open
disks to $H_{|B}^{-1}(x)$. However, if $z\in H^{-1}(x)$, then
$\partial U_z$ cannot intersect $\cL(C_{x'})$ for any $x'<x$, since
otherwise Lemma~\ref{l.disk_boundary} would imply that $\partial
U_z\ssq \wh\cL(C_{\tilde x})$ for some $\tilde x\in (x',x)$ and thus
$H(z)\leq \tilde x$. Similarly, $\partial U_z$ is disjoint from
$\cR(C_{y'})$ for all $y'>x$, and therefore $\partial U_z\ssq
H^{-1}(x)$. Hence, we have that $H^{-1}(x)=H_{\cB}^{-1}(x)^\filled$ is
the `fill-in' of a continuum, and hence a continuum itself.
\end{proof}

This concludes the proof of Theorem \ref{t.poincare}.

\section{Topological linearisation: a universal factor map}
\label{UniversalFactorMap}

The main goal of this section is to prove Theorem \ref{t.universal_factor_map}.
We refer the reader to Section~\ref{Kuratowski} for a
discussion on the relationship between the content presented here and the work of Kuratowski.

As before, we suppose $A\ssq\A$ is a circloid with
decomposable boundary, and $\cA=\pi^{-1}(A)$.  We let $B=\partial A$
and $\cB=\pi^{-1}(B)$.  Our aim is to define a Moore decomposition of
$\A$ such that the corresponding projection maps $A$ to a topological
circle. As before, we start by decomposing $B$, and we use the
family $\cC$ of cuts of $\cB$ as the main tool. 

However, cuts need not be connected, and moreover is easy to give
examples where $\bigcup_{C\in\cC} C$ does not cover all of $\cB$.  In
order to obtain a decomposition starting from $\cC$, we define a
strong partial order relation $\llcurly$ on $\cC$ by writing
$C\llcurly C'$ if and only if there exist uncountably many cuts
$\tilde C\in \cC$ such that $C\prec \tilde C\prec C'$.  Similar to
before, we extend this definition to arbitrary subsets $S,S'\ssq\cA$
by writing $S\llcurly S'$ whenever there exist uncountably many cuts
$\tilde C$ with $S\prec \tilde C\prec S'$.\foot{We note that the
  requirement of uncountably many intermediate cuts is crucial for the
  whole construction in this section. We do not elaborate further on
  this, but just mention that a examples demonstrating why requiring
  uncountably many intermediate cuts is necessary can be produced by
  gluing finitely or countably many pseudoarcs together. Otherwise, a
  statement analogous to Lemma~\ref{l.intermediate_cuts} does not
  hold.}  In case of one-point sets $S=\{z\}$, we write $z\llcurly S'$
instead of $\{z\}\llcurly S'$. Then, given any $z\in\cB$, we define
\begin{equation}
  \label{e.fibres}
  F(z) \ = \ \bigcap_{\stackrel{C^-,C^+\in\cC}{C^-\llcurly z\llcurly C^+}} [C^-,C^+]_\cB \ .
\end{equation}
We let $\cF_\cB=\{F(z)\mid z\in \cB\}$ and call the elements
$F\in\cF_{\cB}$ {\em fibres} of $\cB$. Further, we let
$\cF_B=\{\pi(F)\mid F\in\cF_{\cB}\}$. We note that the intersection in
(\ref{e.fibres}) can be viewed as a nested intersection of continua:
by compactness, for every $n\in\N$ there exist $C_n^-\llcurly z
\llcurly C^+_n$ such that $F(z)\ssq [C^-_n,C^+_n] \ssq
B_{1/n}(F(z))$. Without loss of generality we may assume that
$C^-_n\llcurly C^-_{n+1}$ and $C^+_{n+1}\llcurly C^+_n$ for all
$n\in\N$, and we have $F(z)=\bigcap_{n\in\N} [C^-_n,C^+_n]$. By Lemma \ref{l.interval_connectedness} this is a
nested intersection of continua, hence a continuum.

\begin{lem}\label{l.intermediate_cuts}
  If $S,S'\ssq\cB$ and $S\llcurly S'$, then there exists $C\in\cC$ such that
  $S\llcurly C\llcurly S'$.
\end{lem}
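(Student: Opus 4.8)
The plan is to argue by contradiction, reducing the failure of the conclusion to the existence of a strictly $\prec$-increasing chain of order type $\omega_1$ among the cuts, and then ruling such a chain out by the separability of $\cB$.

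Set $\cE=\{\tilde C\in\cC\mid S\prec\tilde C\prec S'\}$, which is uncountable by hypothesis. For $C\in\cE$ one checks, using transitivity of $\prec$, that $\{\tilde C\in\cC\mid S\prec\tilde C\prec C\}=\{\tilde C\in\cE\mid\tilde C\prec C\}=:\cE_{\prec C}$ and $\{\tilde C\in\cC\mid C\prec\tilde C\prec S'\}=\{\tilde C\in\cE\mid C\prec\tilde C\}=:\cE_{\succ C}$, so that $S\llcurly C$ (respectively $C\llcurly S'$) amounts to $\cE_{\prec C}$ (respectively $\cE_{\succ C}$) being uncountable. The negation of the conclusion thus reads: every $C\in\cE$ has $\cE_{\prec C}$ or $\cE_{\succ C}$ countable. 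Then $\cE=L\cup R$ with $L=\{C\in\cE\mid\cE_{\prec C}\text{ countable}\}$ and $R=\{C\in\cE\mid\cE_{\succ C}\text{ countable}\}$, and $L\cap R=\emptyset$ (since $C\in L\cap R$ would make $\cE=\{C\}\cup\cE_{\prec C}\cup\cE_{\succ C}$ countable); hence one of $L$, $R$ is uncountable, say $L$ — the case of $R$ being symmetric, for instance under reversing the orientation of $\A$, which interchanges $\wh\cL\leftrightarrow\wh\cR$ and $\prec\leftrightarrow\succ$. Every proper initial segment of the linear order $(L,\prec)$ is contained in some $\cE_{\prec C}$ and hence countable, so $(L,\prec)$ has no countable cofinal subset; one can therefore select recursively a strictly $\prec$-increasing sequence $(C_\alpha)_{\alpha<\omega_1}$ in $L$, since at each stage $\alpha<\omega_1$ the countable set $\{C_\beta\mid\beta<\alpha\}$ fails to be cofinal in $L$ and so is strictly dominated by some $C_\alpha\in L$.

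Now the topology enters. By Lemma~\ref{l.generator_inequality} the map $C\mapsto\wh\cL(C)$ is strictly increasing from $(\cC,\prec)$ to the closed subsets of $\cB$ ordered by inclusion — each $\wh\cL(C)=\cL_{-1}(G)$ is closed, being a locally finite union of the closed sets $T^kG$ with $k\le-1$ — and for $C\prec C'$ one has $C'\ssq\wh\cL(C')$ while $C'\ssq\cR(C)$ is disjoint from $\wh\cL(C)$ (because $\wh\cR(C)\cap\wh\cL(C)=C$). Choosing a point $z_\alpha\in C_{\alpha+1}$ for each $\alpha<\omega_1$, we obtain $z_\alpha\notin\wh\cL(C_\alpha)$, while for $\beta<\alpha$ we have $z_\beta\in C_{\beta+1}\ssq\wh\cL(C_{\beta+1})\ssq\wh\cL(C_\alpha)$; since $\wh\cL(C_\alpha)$ is closed, $z_\alpha\notin\ol{\{z_\beta\mid\beta<\alpha\}}$, so there is $\eps_\alpha>0$ with $\dist(z_\alpha,z_\beta)\ge\eps_\alpha$ for all $\beta<\alpha$. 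By the pigeonhole principle there are $n\in\N$ and an uncountable $\cS\ssq\omega_1$ with $\eps_\alpha>1/n$ for $\alpha\in\cS$; the cuts $C_{\alpha+1}$ being pairwise disjoint, the $z_\alpha$ are distinct, so $\{z_\alpha\mid\alpha\in\cS\}$ is an uncountable $1/n$-separated subset of $\cB$ and the balls $B_{1/(2n)}(z_\alpha)$, $\alpha\in\cS$, are pairwise disjoint — contradicting the separability of $\cB\ssq\R^2$. This contradiction proves the lemma.

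I expect the reduction in the second paragraph to be the main obstacle: one must make sure the failure of the conclusion genuinely yields an uncountable strictly increasing chain. The tempting shortcut of embedding $(\cE,\prec)$ order-isomorphically into $\R$ and picking a condensation point in its interior is not available in $\mathrm{ZFC}$, since it would amount to excluding a Souslin line; the transfinite extraction above, together with monotonicity of $C\mapsto\wh\cL(C)$ and separability of $\cB$, is what makes the argument unconditional. Note, finally, that connectedness of individual cuts — which, as remarked in the text, may fail — is never used, only that cuts are nonempty and pairwise disjoint.
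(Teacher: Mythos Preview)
Your proof is correct, and it takes a genuinely different route from the paper's. The paper argues in two steps: first it proves the weaker claim that from $S\llcurly S'$ one can always find cuts $C,C'$ with $S\llcurly C\prec S'$ and $S\prec C'\llcurly S'$ (by a countable-exhaustion argument), and then it leverages this together with the generator structure: assuming the lemma fails, it forms the intersection $E$ of all $\wh\cL(C)$ with $S\llcurly C$ and all $\wh\cR(C')$ with $C'\llcurly S'$, shows that $G=T(L')\cap R'$ is a generator (via Lemma~\ref{l.gen}), extracts a cut $\tilde C\ssq E$, and derives a contradiction from the weak claim. Your argument, by contrast, is purely order-theoretic plus separability: you show that the failure of the conclusion forces an $\omega_1$-chain in $(\cC,\prec)$, and then use the strict monotonicity of $C\mapsto\wh\cL(C)$ into closed subsets of the separable space $\cB$ to manufacture an uncountable $1/n$-separated set. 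This uses less of the specific geometry --- you never touch generators after the definition of cuts --- and would transfer to any totally ordered family of pairwise disjoint nonempty sets admitting such a monotone assignment of closed ``left halves'' in a separable space. The paper's approach, on the other hand, yields the intermediate Claim~\ref{c.intermediate_cuts} as a byproduct and stays closer to the concrete objects, which makes the later arguments in Section~\ref{UniversalFactorMap} feel more continuous with this one. Your remark about Souslin lines is well taken: it explains why the naive ``embed $(\cE,\prec)$ in $\R$'' shortcut is unavailable and why the transfinite extraction is the right move.
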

\proof We first show the following slightly weaker
\begin{claim} \label{c.intermediate_cuts}
  If $V,V'\ssq\cA$ and $V\llcurly V'$, then there exist $C,C'\in\cC$ such that
  $V\llcurly C\prec V'$ and $V\prec C'\llcurly V'$.
\end{claim}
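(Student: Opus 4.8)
The plan is to prove the two assertions separately; since reversing the linear order $\prec$ interchanges them, it suffices to produce a cut $C$ with $V\llcurly C\prec V'$. First I would introduce the set $\cD=\{\tilde C\in\cC\mid V\prec\tilde C\prec V'\}$, which is uncountable by the very definition of $V\llcurly V'$, and for each $\tilde C\in\cD$ its initial segment $\cD^-(\tilde C)=\{\tilde C'\in\cD\mid \tilde C'\prec\tilde C\}$. If some $\tilde C\in\cD$ has $\cD^-(\tilde C)$ uncountable, then $C:=\tilde C$ is already as required: the uncountably many cuts in $\cD^-(\tilde C)$ all lie strictly between $V$ and $C$, witnessing $V\llcurly C$, while $C\in\cD$ gives $C\prec V'$. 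So the entire content of the claim is to exclude the opposite situation, in which $\cD^-(\tilde C)$ is countable for every $\tilde C\in\cD$.

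To do that I would argue by contradiction and assume $\cD^-(\tilde C)$ is countable for all $\tilde C\in\cD$. A standard transfinite recursion then yields a strictly $\prec$-increasing family $(\tilde C_\alpha)_{\alpha<\omega_1}$ of cuts in $\cD$: at stage $\alpha<\omega_1$ the set $\bigcup_{\beta<\alpha}\bigl(\cD^-(\tilde C_\beta)\cup\{\tilde C_\beta\}\bigr)$ is a countable union of countable sets, hence does not exhaust the uncountable set $\cD$, and any cut of $\cD$ lying outside it is $\succ\tilde C_\beta$ for all $\beta<\alpha$. The goal is now to contradict the separability of $\cB\ssq\R^2$ by showing that the intervals $I_\alpha:=(\tilde C_\alpha,\tilde C_{\alpha+1})_\cB$, $\alpha<\omega_1$, form an uncountable family of pairwise disjoint nonempty relatively open subsets of $\cB$. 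Nonemptiness is supplied by Lemma~\ref{l.interval_connectedness}, and relative openness is routine (each $I_\alpha=\cR(\tilde C_\alpha)\cap\cL(\tilde C_{\alpha+1})$ is the intersection of complements in $\cB$ of closed sets $\wh\cL$, $\wh\cR$ of suitable generators).

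The main obstacle is the \emph{disjointness} of the $I_\alpha$, and this is where Lemma~\ref{l.generator_inequality} enters. I would first record, by chasing the identifications $\wh\cL(C)=\cL_{-1}(G)$ and $\wh\cR(C)=\cR_0(G)$ for $C=G\cap T^{-1}G$, the monotonicity statement that $C\preccurlyeq C'$ implies $\wh\cL(C)\ssq\wh\cL(C')$ and $\wh\cR(C')\ssq\wh\cR(C)$, together with the always-valid identity $\wh\cL(C)\cap\wh\cR(C)=C$. Granting these, for $\alpha<\beta$ we have $\tilde C_{\alpha+1}\preccurlyeq\tilde C_\beta$, hence $I_\beta\ssq\cR(\tilde C_\beta)\ssq\wh\cR(\tilde C_{\alpha+1})$, whereas $I_\alpha\ssq\cL(\tilde C_{\alpha+1})=\wh\cL(\tilde C_{\alpha+1})\smin\tilde C_{\alpha+1}$; since $\wh\cL(\tilde C_{\alpha+1})\cap\wh\cR(\tilde C_{\alpha+1})=\tilde C_{\alpha+1}$, these two sets are disjoint. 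This contradiction rules out the bad case and delivers the required $C$; running the symmetric argument with final segments in place of initial segments produces a cut $C'$ with $V\prec C'\llcurly V'$, which completes the proof of the claim.
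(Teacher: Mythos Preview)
Your proof is correct, but the route differs from the paper's. The paper argues as follows: assuming no cut $C$ satisfies $V\llcurly C\prec V'$, it forms $R=\bigcap_{C\prec V'}\wh\cR(C)$ and uses the Hausdorff metric on closed subsets of the plane to extract a \emph{countable} increasing sequence $(C_n)$ of cuts with $R=\bigcap_n\wh\cR(C_n)$; then every cut between $V$ and $V'$ lies in one of the countably many gaps $[V,C_1],[C_n,C_{n+1}]$, each of which contains only countably many cuts since $V\not\llcurly C_n$, contradicting $V\llcurly V'$. You instead run a transfinite recursion to produce an $\omega_1$-chain of cuts in $\cD$ and then contradict separability of $\cB$ via the $\omega_1$ many pairwise disjoint nonempty relatively open intervals $(\tilde C_\alpha,\tilde C_{\alpha+1})_\cB$.

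Both arguments ultimately invoke the second-countability of $\cB\subset\R^2$, but at different points: the paper uses it up front (to get a countable cofinal sequence via Hausdorff distance) and then finishes by pure counting, whereas you postpone it to the very end, after an order-theoretic construction. The paper's version is shorter and keeps matters concrete; your version is more portable, since it isolates the general fact that a totally ordered set in which every initial segment is countable cannot embed in a separable space with nonempty open ``gaps''. Your verification of disjointness via the monotonicity $C\preccurlyeq C'\Rightarrow\wh\cR(C')\subseteq\wh\cR(C)$ is correct and indeed follows from Lemma~\ref{l.generator_inequality}, and the nonemptiness and relative openness of the intervals are exactly as you say.
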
\changenotsign
We will prove the existence of $C$; that of $C'$ then follows by
symmetry. Suppose for a contradiction that for all $C\in\cA$ with $V\prec C\prec
V'$ we have $V\not\llcurly C$. Let $R=\bigcap_{C\in\cC,C\prec V'} \wh\cR(C)$ and
choose an increasing sequence of cuts $C_n\prec V'$ with $R=\bigcap_{n\in\N}
\wh\cR(C_n)$. Note that, for example, it suffices to choose $C_n$ such that
$\dist_{\cH}(\wh\cR(C_n),R)<1/n$. Then, since $V\not\llcurly C_n$, there exist
at most countably many cuts between $V$ and $C_n$. However, every cut between
$V$ and $V'$ is either equal to $C_n$ for some $n\in\N$, lies between $V$ and
$C_1$ or lies between $C_n$ and $C_{n+1}$ for some $n\in\N$. Altogether, we
obtain that there are at most countably many cuts between $V$ and $V'$,
contradicting $V\llcurly V'$. This proves the claim.\smallskip

Now, suppose for a contradiction that for every cut $C$ between $S$
and $S'$ we either have $S\not\llcurly C$ or $C\not\llcurly S'$. Note
that both properties cannot hold simultaneously since $S\llcurly S'$. Let
$L'=\bigcap_{C\in\cC,S\llcurly C} \wh\cL(C)$ and
$R'=\bigcap_{C'\in\cC,C'\llcurly S'}\wh{\cR}(C')$. Then the
intersection $E=L'\cap R'$ is non-empty, since for every pair of cuts
$C,C'$ with $ C'\llcurly S'$ and $S\llcurly C$ we have $C'\prec C$ due
to our contradiction assumption. Moreover, $G=T(L')\cap R'$ is a
continuum, since it can again be represented as a nested intersection
of intervals in $\cB$. Since $G\cap T^{-1}(G)=E\neq \emptyset$, it follows
from Lemma \ref{l.gen} that $G$ is a generator. Thus, it contains a minimal generator, and
consequently the set $E$ contains some cut $\til{C}$. If $S\llcurly
\til{C}$, then Claim \ref{c.intermediate_cuts} implies that there
exists $C_1\in \cC$ such that $S\llcurly C_1\prec \til{C}$, and since
$\til{C}\subseteq L'$ it follows that $\til{C}\subseteq
\wh{\cL}(C_1)$, contradicting the fact that $C_1\prec \til{C}$. Thus
$S\not\llcurly \til{C}$, and by a similar argument
$\til{C}\not\llcurly S'$.  However, this contradicts the fact that
$S\llcurly S'$. \qed\medskip

\begin{lem}\label{l.fibre_inequality}
  Distinct fibres $F,F'\in\cF_{\cB}$ are disjoint and either $ F\llcurly F'$ or
  $F'\llcurly F$.
\end{lem}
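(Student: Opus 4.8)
The plan is to first determine when two points lie in the same fibre and then to read the ordering off from that. I would work throughout with the barrier sets $\cC^-_z=\{C\in\cC\mid C\llcurly z\}$ and $\cC^+_z=\{C\in\cC\mid z\llcurly C\}$; whenever both are non‑empty the intersection in \eqref{e.fibres} factors as $F(z)=\bigcap_{C\in\cC^-_z}\wh\cR(C)\cap\bigcap_{C\in\cC^+_z}\wh\cL(C)$, and in any case $z\in F(z)$, since every admissible pair $C^-\llcurly z\llcurly C^+$ has $z\in(C^-,C^+)_\cB$. Before the main argument I would dispose of a degenerate case: if $\cC$ is countable then $\llcurly$ never holds, every fibre equals $\cB$, and the statement is vacuous; so I may assume $\cC$ is uncountable. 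In that case I claim $\cC^-_z\neq\emptyset\neq\cC^+_z$ for every $z$: by Corollary~\ref{c.generator_ends} each cut lies inside two adjacent translates of $G_0$, so by pigeonhole uncountably many cuts lie in one window $T^jG_0\cup T^{j+1}G_0$, hence (translating by $T$) in every such window; choosing $N$ with $T^{-N}C_0\prec z\prec T^{N}C_0$, the uncountably many cuts squeezed between two suitable translates of $C_0$ witness $T^{-N}C_0\llcurly z$ and $z\llcurly T^{N}C_0$.

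The heart of the matter is the following: if $u\in F(z)$ then neither $u\llcurly z$ nor $z\llcurly u$ holds; call this $(\ast)$. I would prove $(\ast)$ with Lemma~\ref{l.intermediate_cuts}: if $u\llcurly z$ it yields a cut $C_1$ with $u\llcurly C_1\llcurly z$, and then $C_1\in\cC^-_z$ forces $u\in F(z)\ssq\wh\cR(C_1)$, while $u\llcurly C_1$ forces $u\in\cL(C_1)\ssq\wh\cL(C_1)$ with $u\notin C_1$; since $\wh\cL(C_1)\cap\wh\cR(C_1)=C_1$ this is a contradiction (the case $z\llcurly u$ is symmetric). Using $(\ast)$ I would show that a single point determines its fibre: $u\in F(z)\Rightarrow F(u)=F(z)$. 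First a strengthening: for $C\in\cC^-_z$ and $u\in F(z)$ one actually has $C\prec u$, not merely $u\in\wh\cR(C)$, because otherwise $u\in C$ and then, since $C\llcurly z$ provides uncountably many cuts $\tilde C$ with $C\prec\tilde C\prec z$, each satisfies $u\in C\ssq\cL(\tilde C)$, so $u\llcurly z$, contradicting $(\ast)$. Consequently all but at most one of the uncountably many cuts in $(C,z)$ lie in $(C,u)$ (if uncountably many had $u\preccurlyeq\tilde C\prec z$ we would again obtain $u\llcurly z$), so $C\llcurly u$; thus $\cC^-_z\ssq\cC^-_u$ and, symmetrically, $\cC^+_z\ssq\cC^+_u$, whence $F(u)\ssq F(z)$. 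A symmetric application of $(\ast)$ shows $z\in F(u)$ — e.g.\ if some $C\in\cC^+_u$ had $C\prec z$, the uncountably many cuts in $(u,C)$ would lie in $(u,z)$, giving $u\llcurly z$ — and running the previous step with $u$ and $z$ interchanged yields $\cC^{\pm}_u\ssq\cC^{\pm}_z$, so $F(u)=F(z)$. Disjointness of distinct fibres is then immediate: $p\in F\cap F'$ gives $F=F(p)=F'$.

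For the ordering, take distinct fibres $F=F(z)$, $F'=F(w)$ and write $F=\bigcap_n[C^-_n,C^+_n]_\cB$ and $F'=\bigcap_n[D^-_n,D^+_n]_\cB$ as the nested intersections of compact intervals from the discussion after \eqref{e.fibres}, with $C^-_n\llcurly C^-_{n+1}\llcurly z\llcurly C^+_{n+1}\llcurly C^+_n$ and similarly for the $D$'s. Since $F\cap F'=\emptyset$, compactness gives $n,m$ with $[C^-_n,C^+_n]_\cB\cap[D^-_m,D^+_m]_\cB=\emptyset$; as these are $\preccurlyeq$-intervals with non‑empty, totally ordered endpoint cuts, the larger of the two left endpoints would otherwise lie in both, so either $C^+_n\prec D^-_m$ or $D^+_m\prec C^-_n$, and after possibly exchanging $F$ and $F'$ I may assume $C^+_n\prec D^-_m$. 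Now for each of the uncountably many cuts $\tilde C$ with $C^+_{n+1}\prec\tilde C\prec C^+_n$ (uncountably many because $C^+_{n+1}\llcurly C^+_n$) I would check that $F\prec\tilde C\prec F'$: from $F\ssq\wh\cL(C^+_{n+1})$ and $C^+_{n+1}\prec\tilde C$ one gets $F\ssq\cL(\tilde C)$ (note $F\cap\tilde C\ssq\wh\cL(C^+_{n+1})\cap\cR(C^+_{n+1})=\emptyset$), and from $\tilde C\prec C^+_n\prec D^-_m$ together with $F'\ssq\wh\cR(D^-_m)$ one gets $F'\ssq\cR(\tilde C)$ the same way. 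Hence uncountably many cuts lie strictly between $F$ and $F'$, i.e.\ $F\llcurly F'$ (and $F'\llcurly F$ in the exchanged case), completing the proof.

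The main obstacle is $(\ast)$ together with its consequence that a fibre is recovered from any one of its points: the relation $\llcurly$ is not a topologically closed relation, so every manipulation of it must be routed through the cardinality bookkeeping of Lemma~\ref{l.intermediate_cuts}, and in particular the strictness refinement (a left barrier $C$ of $z$ satisfies $C\prec u$, not merely $u\in\wh\cR(C)$, for every $u\in F(z)$) is precisely what makes the counting in $(\ast)$ close up. The preliminary dichotomy — whether $\cC$ is countable, and the resulting non‑emptiness of $\cC^\pm_z$ — is needed only to exclude the spurious scenario in which some but not all fibres coincide with $\cB$.
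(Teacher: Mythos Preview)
Your proof is correct, but it takes a considerably longer route than the paper's. The paper argues directly from the definition: if $F(z)\neq F(z')$ then the barrier families $\cC^{\pm}_z$ and $\cC^{\pm}_{z'}$ differ, so there is a single witnessing cut $C$ with (say) $z\llcurly C$ but $z'\not\llcurly C$; two applications of Lemma~\ref{l.intermediate_cuts} produce $C^-,C^+$ with $z\llcurly C^-\llcurly C^+\llcurly C$, and a one-line cardinality count (from $C^+\llcurly C$ and $z'\not\llcurly C$) yields $C^+\llcurly z'$. This gives disjointness \emph{and} $F(z)\llcurly F(z')$ simultaneously in three lines.

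You instead first prove the auxiliary statement $(\ast)$ and then that any point of a fibre determines it ($u\in F(z)\Rightarrow \cC^{\pm}_u=\cC^{\pm}_z$), deducing disjointness; for the ordering you pass through the nested-interval representation and a compactness argument. This is sound, and the intermediate fact $\cC^{\pm}_u=\cC^{\pm}_z$ for $u\in F(z)$ is a pleasant byproduct not made explicit in the paper. You also handle the degenerate case (countable $\cC$, hence $\llcurly$ vacuous) explicitly; the paper's context makes this unnecessary because the dynamical hypothesis later forces uncountably many fibres (Lemma~\ref{l.projected_decomposition}). Two minor remarks: your pigeonhole argument actually shows each cut lies in a \emph{single} translate $T^jG_0$ (not just two adjacent ones), which makes the subsequent bookkeeping cleaner; and where you write ``all but at most one'' of the cuts in $(C,z)$ lie in $(C,u)$, the correct bound is ``all but countably many'', though of course this is all you need.
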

\proof Let $F=F(z)$ and $F'=F(z')$. If $F\neq F'$, then there exists
$C\in\cC$ such that either $z\llcurly C$ and $z'\not\llcurly C$, or
$C\llcurly z$ and $C\not\llcurly z'$. Assume the former case (the
other case is analogous). According to
Lemma~\ref{l.intermediate_cuts}, there exist $C^-,C^+$ with $z\llcurly
C^-\llcurly C^+\llcurly C$. However, this implies that $z\llcurly
C^-\llcurly C^+\llcurly z'$, hence $F(z)\ssq \wh\cL(C^-)$ and
$F(z')\ssq \wh\cR(C^+)$ are disjoint and $F(z)\llcurly F(z')$.
\qed\medskip

We now turn to the decomposition of $\cA$. As before, given $z\in\cA\smin\cB$,
we denote by $U_z$ the connected component of $\cA\smin\cB$ containing $z$.
\begin{lem}\label{l.fibres_and_disks}
  Suppose $F\in F_{\cB}$ intersects $\partial U_z$ for some
  $z\in\cA\smin\cB$. Then $\partial U_z\ssq F$.
\end{lem}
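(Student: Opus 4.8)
The statement to prove is Lemma~\ref{l.fibres_and_disks}: if a fibre $F = F(z_0) \in \cF_\cB$ meets $\partial U_z$ for some $z \in \cA \smin \cB$, then $\partial U_z \ssq F$. The plan is to use Lemma~\ref{l.disk_boundary} as the engine: it tells us that once $\partial U_z$ meets the open interval $(C^-_0, C^+_0)_\cB$ between two cuts, the whole of $\partial U_z$ is trapped in $[C^-, C^+]_\cB$ for any slightly wider pair of cuts $C^- \prec C^-_0 \prec C^+_0 \prec C^+$. The fibre $F$ is by definition the nested intersection $\bigcap [C^-_n, C^+_n]_\cB$ over cuts with $C^-_n \llcurly z_0 \llcurly C^+_n$, and $\llcurly$ is stronger than $\prec$ (there are uncountably many intermediate cuts, so in particular there is at least one strictly in between). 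So the core idea is: whenever $C^- \llcurly z_0 \llcurly C^+$, insert intermediate cuts $C^- \prec C^-_0 \prec C^+_0 \prec C^+$, apply Lemma~\ref{l.disk_boundary} to conclude $\partial U_z \ssq [C^-, C^+]_\cB$, and then intersect over all such pairs to get $\partial U_z \ssq F$.

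First I would fix a point $w \in F \cap \partial U_z$; since $w \in F = F(z_0)$, we have $w \llcurly C$ implies $z_0 \llcurly C$ (and symmetrically on the left), essentially because $F(w) = F(z_0)$ — here I would invoke Lemma~\ref{l.fibre_inequality}, which gives that distinct fibres are disjoint and $\llcurly$-separated, so $w \in F$ forces $F(w) = F$. Now take any pair of cuts with $C^- \llcurly z_0 \llcurly C^+$. Since $\llcurly$ means uncountably many intermediate cuts, I can choose cuts $C^-_0, C^+_0$ with $C^- \prec C^-_0 \prec z_0 \prec C^+_0 \prec C^+$ — in fact I want $C^-_0 \llcurly z_0 \llcurly C^+_0$ as well, which is fine by splitting the uncountably many intermediate cuts into uncountably many on each side of $z_0$ (or just use Lemma~\ref{l.intermediate_cuts}). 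The key point: since $w$ lies in the same fibre as $z_0$, and $C^-_0 \llcurly z_0$, $z_0 \llcurly C^+_0$, we get $C^-_0 \prec w \prec C^+_0$, i.e.\ $w \in (C^-_0, C^+_0)_\cB$ (using that $w \in \cB$). Since $w \in \partial U_z$ as well, we have $(C^-_0, C^+_0)_\cB \cap \partial U_z \neq \emptyset$, so Lemma~\ref{l.disk_boundary} applies and yields $\partial U_z \ssq [C^-, C^+]_\cB$.

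Having established $\partial U_z \ssq [C^-, C^+]_\cB$ for every pair of cuts with $C^- \llcurly z_0 \llcurly C^+$, I take the intersection over all such pairs. By the definition \eqref{e.fibres} of $F(z_0)$ — which is precisely this intersection — we conclude $\partial U_z \ssq F(z_0) = F$, completing the proof. One bookkeeping point to be careful about: I should make sure that when $C^-, C^+$ are merely required to satisfy $C^- \llcurly z_0 \llcurly C^+$, the intermediate cuts $C^-_0, C^+_0$ can indeed be chosen with $C^-_0 \llcurly z_0$ and $z_0 \llcurly C^+_0$ and with $w$ strictly between them; this is where the ``uncountably many intermediate cuts'' hypothesis in the definition of $\llcurly$ is doing real work, and Lemma~\ref{l.intermediate_cuts} is the clean tool for it.

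**Main obstacle.** The only genuinely delicate step is the interface between the two orders $\prec$ and $\llcurly$: Lemma~\ref{l.disk_boundary} is stated in terms of $\prec$, but the fibre is defined via $\llcurly$, so I must verify that the strict inequalities $C^- \prec C^-_0 \prec C^+_0 \prec C^+$ needed by Lemma~\ref{l.disk_boundary} can always be extracted from $C^- \llcurly z_0 \llcurly C^+$, and that $w$ (the common point of $F$ and $\partial U_z$) genuinely sits in the open interval $(C^-_0, C^+_0)_\cB$ rather than on its boundary. This is essentially immediate from $F(w) = F(z_0)$ together with Lemma~\ref{l.intermediate_cuts}, but it is the place where a careless argument could slip, so I would spell it out. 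Everything else is a routine nested-intersection argument.
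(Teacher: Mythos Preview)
Your proposal is correct and follows essentially the same route as the paper's proof: insert intermediate cuts via Lemma~\ref{l.intermediate_cuts}, apply Lemma~\ref{l.disk_boundary}, and intersect over all admissible pairs to land in $F$. The paper's version is marginally more direct in that it argues with $F$ itself (noting $C_0^-\llcurly F\llcurly C_0^+$ forces $F\ssq(C_0^-,C_0^+)_\cB$, hence the intersection point lies there) rather than detouring through $F(w)=F(z_0)$ via Lemma~\ref{l.fibre_inequality}, but the substance is the same.
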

\proof Suppose $F=F(z')$ intersects $\partial U_z$. Then given any cuts
 $C^-\llcurly z\llcurly C^+$, Lemma~\ref{l.intermediate_cuts} yields cuts
$C_0^-,C_0^+$ with $C^-\llcurly C_0^-\llcurly F\llcurly C_0^+\llcurly C^+$ and
$\partial U_z\cap (C_0^-,C_0^+)_\cB\neq \emptyset$. By Lemma~\ref{l.disk_boundary},
this implies that $\partial U_z\ssq [C^-,C^+]_\cB$. Since this is true for all pairs
of cuts satisfying $C^-\llcurly z\llcurly C^+$, we obtain $\partial U_z\ssq
F$. \qed\medskip

Bounded connected components of the complement of a fibre
$F\in\cF_\cB$ are also bounded connected components of $\cA\smin\cB$.
Therefore, the preceding lemma implies that $F^\filled=F\cup
\bigcup_{\partial U_z\cap F\neq\emptyset} U_z$. This allows to define
a decomposition of $\cA$ by $\cF_\cA=\{F^\filled \mid F\in\cF_\cB\}$.
We denote fibres of $\cF_\cA$ by $\wh F$, and given $z\in\cA$ we let
$\wh F(z)$ be the unique fibre in $\cF_\cA$ which contains $z$. Note
that $F(z) = \bd \wh{F}(z)$, and so $\wh{F}(z)=(\bd
\wh{F}(z))^\filled$.

\begin{rem}
  An alternative way to define the fibres of $\cA$ is the following. Recall that
  we write $C\prec z$ for a cut $C$ and $z\in\cA$ iff $z\in\cR(C)^\filled$, and
  $z\prec C$ iff $z\in\cL(C)^\filled$. The notions $C\llcurly z$ and
  $z\llcurly C$ in  $\cF_\cA$ can then be defined as before by the existence of
  uncountably many intermediate cuts. Using this, the fibres of $\cA$ can be
  defined exactly in the same way as those of $\cB$ in
  (\ref{e.fibres}). Equivalence of the two definitions is provided by the
  following statement.
\end{rem}

\begin{lem}\label{l.filled_fibres} For any $z\in\cA$, we have 
\begin{equation}\label{e.filled_fibres}
\wh F(z) \ = \  \bigcap_{\stackrel{C^-,C^+\in\cC}{C^-\llcurly z\llcurly C^+}}
    [C^-,C^+]_\cB^\filled  \ .
  \end{equation}
\end{lem}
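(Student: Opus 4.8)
The plan is to show the two inclusions between $\wh F(z)$ and the right-hand side $S(z) := \bigcap_{C^-\llcurly z\llcurly C^+} [C^-,C^+]_\cB^\filled$ separately. First I would observe that for every pair of cuts $C^-\llcurly z\llcurly C^+$ we have $\wh F(z)\ssq [C^-,C^+]_\cB^\filled$: indeed $F(z)\ssq [C^-,C^+]_\cB$ by the very definition \eqref{e.fibres}, and since $\wh F(z)=F(z)^\filled$ while the bounded complementary disks of $F(z)$ are among those of $\cA\smin\cB$, each such disk $U_z'$ has $\bd U_z'\ssq F(z)\ssq[C^-,C^+]_\cB$, whence $U_z'\ssq[C^-,C^+]_\cB^\filled$ by the argument of Lemma~\ref{l.disk_boundary} (the boundary of $U_z'$ cannot meet both $\cL(C^-)$ and $\cR(C^+)$, so $U_z'$ lies in the filled region). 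Taking the intersection over all admissible pairs gives $\wh F(z)\ssq S(z)$.

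For the reverse inclusion I would take a point $w\in S(z)$ and show $w\in\wh F(z)$. If $w\in\cB$, then $w\in[C^-,C^+]_\cB$ for all cuts with $C^-\llcurly z\llcurly C^+$, which forces $w\in F(z)$ (here one uses that a point of $\cB$ in $[C^-,C^+]_\cB^\filled$ actually lies in $[C^-,C^+]_\cB$, since the filled-in disks are disjoint from $\cB$), and hence $w\in F(z)^\filled=\wh F(z)$. If instead $w\in\cA\smin\cB$, consider the disk $U_w$ containing it. For every pair $C^-\llcurly z\llcurly C^+$ we have $w\in[C^-,C^+]_\cB^\filled$, and I claim this forces $\bd U_w\ssq[C^-,C^+]_\cB$: otherwise $\bd U_w$ meets $\cL(C^-)$ or $\cR(C^+)$, and then, shrinking to nearby cuts $C^-_0,C^+_0$ with $C^-\llcurly C^-_0\llcurly z\llcurly C^+_0\llcurly C^+$ (using Lemma~\ref{l.intermediate_cuts}) and invoking Lemma~\ref{l.disk_boundary}, one obtains that $U_w$ lies outside the filled interval, contradicting $w\in[C^-,C^+]_\cB^\filled$. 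Thus $\bd U_w\ssq[C^-,C^+]_\cB$ for all such pairs, so $\bd U_w\ssq F(z)$; by Lemma~\ref{l.fibres_and_disks} (or directly, since $\bd U_w$ is then contained in the fibre meeting it) this fibre is $F(z)$, and hence $U_w\ssq F(z)^\filled=\wh F(z)$, so $w\in\wh F(z)$.

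The main obstacle I anticipate is the bookkeeping around ``filled'' sets: one must be careful that the bounded complementary components of a fibre $F(z)$, of a finite-type interval $[C^-,C^+]_\cB$, and of $\cB$ itself are consistently identified with connected components of $\cA\smin\cB$, so that $F(z)^\filled$ and $[C^-,C^+]_\cB^\filled$ interact correctly under intersection. The key technical input making this work is Lemma~\ref{l.disk_boundary}, which pins down that a disk whose boundary touches the ``middle'' of an interval stays inside the filled interval once one passes to slightly larger cuts; combined with Lemma~\ref{l.intermediate_cuts} to produce those intermediate cuts, this controls exactly which disks survive each intersection. The rest is a routine compactness/nested-intersection argument already used repeatedly above.
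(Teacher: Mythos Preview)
Your plan is essentially the same as the paper's, just more explicit. The paper's proof is a two-liner: it first reduces to $z\in\cB$ by noting that for $z\in\cA\smin\cB$ and any $z'\in\partial U_z$ both sides of \eqref{e.filled_fibres} agree (since the relations $C^-\llcurly z$ and $C^-\llcurly z'$ coincide), and then for $z\in\cB$ simply observes that the right-hand side equals $F(z)^\filled=\wh F(z)$, citing Lemma~\ref{l.fibres_and_disks}. Your case analysis on $w$ is exactly what is hidden behind that citation.

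Two small points are worth tightening. First, you invoke $F(z)$ and ``$\wh F(z)=F(z)^\filled$'' without restriction, but $F(z)$ is only defined in \eqref{e.fibres} for $z\in\cB$; you should either do the paper's reduction to $z\in\cB$ at the outset, or explicitly set $F(z):=\partial\wh F(z)$ for general $z\in\cA$. Second, in the reverse inclusion for $w\in\cA\smin\cB$, your appeal to Lemma~\ref{l.disk_boundary} points the wrong way: that lemma takes a boundary point inside a small interval and concludes containment in a larger one, which does not directly yield ``$U_w$ lies outside the filled interval''. The clean argument needs neither Lemma~\ref{l.disk_boundary} nor intermediate cuts: since $\cL(C^-)$ and $\cR(C^+)$ are connected, unbounded and disjoint from $[C^-,C^+]_\cB$, they lie in $U_\infty([C^-,C^+]_\cB)$, so $\cB\cap[C^-,C^+]_\cB^\filled=[C^-,C^+]_\cB$. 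As $w\in[C^-,C^+]_\cB^\filled$ and $U_w$ is connected and disjoint from $[C^-,C^+]_\cB$, the disk $U_w$ sits in a bounded complementary component, whence $\partial U_w\ssq\overline{U_w}\cap\cB\ssq[C^-,C^+]_\cB^\filled\cap\cB=[C^-,C^+]_\cB$, and you conclude as you wrote.
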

\proof If $z\in\cA\smin\cB$ and $z'\in\partial U_z$, then by
definition $\wh F(z)=\wh F(z')$, and the intersection on the right
hand side of (\ref{e.filled_fibres}) coincides as well. Thus, we may
assume $z\in\cB$. However, in this case the right side is just
$F(z)^\filled =\wh F(z)$ by Lemma~\ref{l.fibres_and_disks}.
\qed\medskip

Given $F^-,F^+\in \cF_\cB$, we let $(F^-,F^+)_\cB=\bigcup_{F^-\prec F
  \prec F^+} F = \{z\in\cB \mid F^-\prec z \prec F^+\}$ and
$[F^-,F^+]_\cB= F^-\cup (F^-,F^+)_\cB\cup F^+$.  Note that these
intervals are relatively open, respectively relatively closed, in $\cB$.
If $\wh F$ and $\wh F'$ are distinct fibres in $\cF_\cA$, then
$F=\partial\wh F$ and $F'=\partial F'$ are fibres in $\cF_\cB$.
According to Lemma~\ref{l.fibre_inequality}, we always have either
$F\prec F'$ or $F'\prec F$, so either $\wh F\prec \wh F'$ or $\wh
F'\prec \wh F$, and the notions $\prec$ and $\llcurly$ coincide for
the pairs $\wh F,\wh F'$ and $F,F'$.  As before, given $\wh F^-\prec
\wh F^+$, we let $(\wh F^-,\wh F^+)_\cA=\bigcup_{\wh F^-\prec \wh F
  \prec \wh F^+} \wh F$ and $[\wh F^-,\wh F^+]_\cA=\wh F^-\cup (\wh
F^-,\wh F^+)_\cA\cup \wh F^+$. As a consequence of Lemma~\ref{l.fibres_and_disks}, we
obtain that $(\wh F^-,\wh F^+)_\cA=(\partial \wh F^-,\partial\wh
F^+)_\cB^\filled$ and $[\wh F^-,\wh F^+]_\cA=[\partial \wh
F^-,\partial\wh F^+]_\cB^\filled$. In particular, this implies the
following observation, which we state for further use.

\begin{lem}
  Given $F^-,F^+$, we have that $(\wh F^-,\wh F^+)_\cA$ is
  relatively open and $[\wh F^-,\wh F^+]_\cA$ is relatively closed in
  $\cA$.
\end{lem}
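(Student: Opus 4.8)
The plan is to isolate a single ``fibre‑saturation'' principle and apply it twice. Call $\cO\ssq\cB$ \emph{$\cF_\cB$‑saturated} if it is a union of fibres of $\cF_\cB$, and for such $\cO$ set $\wh\cO=\cO\cup\bigcup\{U_z\mid z\in\cA\smin\cB,\ \partial U_z\ssq\cO\}$. Since $\partial U_z$ is connected and meets exactly one fibre, in which it is then contained (Lemma~\ref{l.fibres_and_disks}), a direct check gives $\wh\cO=\{z\in\cA\mid \partial\wh F(z)\ssq\cO\}$, with the convention $\partial\wh F(z)=F(z)$ for $z\in\cB$. The principle is: \emph{if $\cO\ssq\cB$ is $\cF_\cB$‑saturated and relatively open in $\cB$, then $\wh\cO$ is relatively open in $\cA$.} Granting it, the lemma follows. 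Writing $F^\pm=\partial\wh F^\pm$, the discussion preceding the lemma identifies $(\wh F^-,\wh F^+)_\cA$ with the hull $\wh\cO$ of $\cO=(F^-,F^+)_\cB$; this $\cO$ is $\cF_\cB$‑saturated by construction and relatively open in $\cB$ by the preceding note, so $(\wh F^-,\wh F^+)_\cA$ is relatively open in $\cA$. Likewise $[\wh F^-,\wh F^+]_\cA=\{z\in\cA\mid\partial\wh F(z)\ssq[F^-,F^+]_\cB\}$; applying the principle to $\cO=\cB\smin[F^-,F^+]_\cB$ — which is $\cF_\cB$‑saturated, and relatively open in $\cB$ since $[F^-,F^+]_\cB$ is relatively closed there by the same note — gives $\wh\cO=\cA\smin[\wh F^-,\wh F^+]_\cA$, so $[\wh F^-,\wh F^+]_\cA$ is relatively closed in $\cA$.

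To prove the principle, fix $z\in\wh\cO$. If $z\in\cA\smin\cB$, then $\partial U_z\ssq\cO$, hence $U_z\ssq\wh\cO$, and $U_z$ — a connected component of $\cA\smin\cB=\pi^{-1}(A\smin B)$, which is open in $\R^2$ — is the required neighbourhood. If $z\in\cB$, then $z\in\cO$, and relative openness of $\cO$ in $\cB$ provides $\eps>0$ with $B_\eps(z)\cap\cB\ssq\cO$; we show $B_\eps(z)\cap\cA\ssq\wh\cO$. A point of $B_\eps(z)\cap\cB$ lies in $\cO\ssq\wh\cO$. Given $w\in B_\eps(z)\cap(\cA\smin\cB)$, the ball $B_\eps(z)$ is connected and meets both the open set $U_w$ (at $w$) and its complement (at $z\in\cB$), hence meets $\partial U_w$; as $\partial U_w\ssq\cB$, this produces a point of $\partial U_w$ in $B_\eps(z)\cap\cB\ssq\cO$. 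By Lemma~\ref{l.fibres_and_disks} the connected set $\partial U_w$ is contained in a single fibre $F\in\cF_\cB$, and since $\cO$ is a union of fibres, $\partial U_w\cap\cO\neq\emptyset$ forces $F\ssq\cO$; thus $\partial U_w\ssq\cO$, hence $U_w\ssq\wh\cO$ and in particular $w\in\wh\cO$. Therefore $B_\eps(z)\cap\cA\ssq\wh\cO$, which proves the principle, and with it the lemma.

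The step I would flag as the crux — short, but the only one that is not bookkeeping — is the implication ``$\partial U_w$ meets $\cO\ \Rightarrow\ \partial U_w\ssq\cO$'', which is precisely where decomposability enters, through Lemma~\ref{l.fibres_and_disks}: the boundary of a complementary disk of $\cA\smin\cB$ cannot straddle two fibres, so an $\cF_\cB$‑saturated set cannot cut through it. Everything else is already available: the relative openness of $(F^-,F^+)_\cB$ and relative closedness of $[F^-,F^+]_\cB$ in $\cB$ used to choose $\eps$ — the former ultimately because each $\cL(C)=\cB\smin\wh\cR(C)$ is relatively open in $\cB$ (as $\wh\cR(C)=\cR_0(G)$ is a locally finite union of compacta) and $(F^-,F^+)_\cB$ is a union of intervals $(C,C')_\cB=\cR(C)\cap\cL(C')$, exactly as in the continuity argument for $H$ — and the two identifications with the hulls $\wh\cO$, which follow from $\wh F=(\partial\wh F)^\filled$ together with the fact that whether a point $z\in\cA$ lies in either interval depends only on the fibre $\partial\wh F(z)$.
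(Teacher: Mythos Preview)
Your proof is correct and follows the same approach as the paper, which states the lemma without proof as a consequence of the identifications $(\wh F^-,\wh F^+)_\cA=(F^-,F^+)_\cB^\filled$ and $[\wh F^-,\wh F^+]_\cA=[F^-,F^+]_\cB^\filled$ together with the relative openness/closedness of the $\cB$-intervals in $\cB$; your ``fibre-saturation principle'' spells out precisely why the passage from $\cB$ to $\cA$ works, via the same $\eps$-ball argument the paper uses for the continuity of $H$. One minor remark: you appeal to connectedness of $\partial U_w$, but this is not needed, since Lemma~\ref{l.fibres_and_disks} already gives $\partial U_w\ssq F$ for any fibre $F$ meeting $\partial U_w$.
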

Moreover, we have 
\begin{lem} \label{l.closed_fibre_intervals} For all $\wh F^-\prec \wh
  F^+\in\cF_\cA$, the set $[\wh F^-,\wh F^+]_\cA$ is a non-separating
  continuum.
\end{lem}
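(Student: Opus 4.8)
The plan is to reduce the statement to the observation recorded in Section~\ref{Preliminaries} that the fill-in $S^\filled$ of an arbitrary planar continuum $S$ is a non-separating continuum. Write $F^-=\bd\wh F^-$ and $F^+=\bd\wh F^+$; since $\wh F^-\neq\wh F^+$ and $\wh F=(\bd\wh F)^\filled$ for every fibre, these are distinct fibres of $\cB$, and $\wh F^-\prec\wh F^+$ gives $F^-\prec F^+$, with $F^-$ and $F^+$ disjoint by Lemma~\ref{l.fibre_inequality}. Recalling the identity $[\wh F^-,\wh F^+]_\cA=[F^-,F^+]_\cB^\filled$ noted just before the lemma, it suffices to prove that $[F^-,F^+]_\cB$ is a continuum; applying the fill-in then completes the proof.

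The main step is to exhibit $[F^-,F^+]_\cB$ as a decreasing intersection of continua of the type furnished by Lemma~\ref{l.interval_connectedness}. Using the defining formula (\ref{e.fibres}) and compactness, I would pick, for $F^-=F(z^-)$ and $F^+=F(z^+)$, decreasing sequences of cut-intervals $[C^-_n,D^-_n]_\cB\searrow F^-$ with $C^-_n\llcurly z^-\llcurly D^-_n$ and $[C^+_n,D^+_n]_\cB\searrow F^+$ with $C^+_n\llcurly z^+\llcurly D^+_n$, chosen so that $C^-_n\preccurlyeq C^-_{n+1}$, $D^+_{n+1}\preccurlyeq D^+_n$, and, for all large $n$ (because $F^-$ and $F^+$ are disjoint compacta), $D^-_n\prec C^+_n$. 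Then $K_n:=[C^-_n,D^+_n]_\cB$ is a continuum by Lemma~\ref{l.interval_connectedness}, the sequence is decreasing, and each $K_n$ is bounded, since intervals $[C,C']_\cB$ have $\pi_1$-projection contained in a bounded interval (they are of the form $\cR_0(G)\cap\cL_{-1}(G')$ for minimal generators) while $\cB$ is horizontal. I would then verify that $\bigcap_n K_n=[F^-,F^+]_\cB$. The inclusion ``$\supseteq$'' holds because $F^-\ssq[C^-_n,D^-_n]_\cB\ssq K_n$, $F^+\ssq[C^+_n,D^+_n]_\cB\ssq K_n$, and any $z$ with $F^-\prec z\prec F^+$ lies strictly to the right of $F^-\ssq\wh\cR(C^-_n)$ and strictly to the left of $F^+\ssq\wh\cL(D^+_n)$, hence $C^-_n\prec z\prec D^+_n$. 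For ``$\subseteq$'', if $w\in\bigcap_n K_n$ but $w\notin[F^-,F^+]_\cB$ then, say, $w\prec F^-$; since $w\notin F^-=\bigcap_n[C^-_n,D^-_n]_\cB$, there is $n$ with $w\notin[C^-_n,D^-_n]_\cB$, and as $w$ lies to the left of $F^-\ssq\wh\cL(D^-_n)$ this forces $w\prec C^-_n$, so $w\notin\wh\cR(C^-_n)\supseteq K_n$, contradicting $w\in\bigcap_n K_n$. Thus $[F^-,F^+]_\cB=\bigcap_n K_n$ is a nested intersection of continua, hence a continuum; consequently $[\wh F^-,\wh F^+]_\cA=[F^-,F^+]_\cB^\filled$ is a non-separating continuum.

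The reduction and the elementary properties of the $K_n$ are immediate from results already in hand, so the only point that needs genuine care is the equality $\bigcap_n K_n=[F^-,F^+]_\cB$. This is entirely a matter of bookkeeping with the total order $\prec$ on cuts, using that fibres are $\prec$-convex and are trapped between the cuts $C^-_n$ and $D^+_n$; I expect this to be the part of the argument requiring the most attention to detail, though no new idea.
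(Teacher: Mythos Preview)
Your proposal is correct and follows essentially the same route as the paper's own proof: both represent $[F^-,F^+]_\cB=\bd[\wh F^-,\wh F^+]_\cA$ as a nested intersection $\bigcap [C^-,C^+]_\cB$ over cuts with $C^-\llcurly z^-$ and $z^+\llcurly C^+$, invoke Lemma~\ref{l.interval_connectedness} to see this is a continuum, and then pass to the fill-in. The paper simply asserts the displayed equality and moves on; your version supplies the order-theoretic bookkeeping that verifies it, which is exactly the detail you correctly identified as the only place needing care.
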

\proof 
If $\wh F^-=\wh F(z^-)$ and $\wh F^+=\wh F(z^+)$ with $z^-,z^+\in\cB$,
then 
\begin{equation}
\partial [\wh F^-,\wh F^+]_\cA \ = \ \bigcap_{\stackrel{C^-\llcurly z^-}{z^+\llcurly C^+}}
[C^-,C^+]_\cB \ .
\end{equation}
Hence, as a nested intersection of continua the set $\partial [\wh
F^-,\wh F^+]_\cB$ is connected, and so $[\wh F^-,\wh
F^+]_\cA=\left(\partial [\wh F^-,\wh F^+]_\cB\right)^\filled$ is a
filled -- and hence non-separating -- continuum.  \qed\medskip

\begin{lem}
  $\cF_{\cA}$ is an upper semicontinuous decomposition of $\cA$.
\end{lem}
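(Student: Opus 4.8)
The statement to prove is that $\cF_\cA=\{\wh F(z)\mid z\in\cA\}$ is an upper semicontinuous decomposition of $\cA$, i.e.\ a partition of $\cA$ into compact sets such that for every open $U\ssq\cA$, the union of all fibres $\wh F\in\cF_\cA$ with $\wh F\ssq U$ is open. The fact that it is a partition into compact sets is essentially built into the construction: by Lemma~\ref{l.fibre_inequality} distinct fibres $F,F'\in\cF_\cB$ are disjoint, hence distinct fibres of $\cF_\cA$ are disjoint (since $\wh F=(\bd\wh F)^\filled$ and the bounded complementary components of $F$ are precisely the disks $U_z$ with $\bd U_z\ssq F$, which are not attached to any other fibre); and every $z\in\cA$ lies in $\wh F(z)$, so the fibres cover $\cA$. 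Compactness of each $\wh F(z)$ follows from Lemma~\ref{l.closed_fibre_intervals} (taking $\wh F^-=\wh F^+=\wh F(z)$), or directly from the nested-intersection-of-continua description in (\ref{e.fibres}) and (\ref{e.filled_fibres}). So the real content is upper semicontinuity.

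**Main argument for upper semicontinuity.** Fix an open set $U\ssq\cA$ and let $V=\bigcup\{\wh F\in\cF_\cA\mid \wh F\ssq U\}$; I want to show $V$ is open. Let $z\in V$, so $\wh F(z)\ssq U$. The key is to trap $\wh F(z)$ between two close-by fibre-interval endpoints. Write $F(z)=\bd\wh F(z)$ and recall from the paragraph after (\ref{e.fibres}) that $F(z)=\bigcap_{n\in\N}[C^-_n,C^+_n]_\cB$ for suitable cuts with $C^-_n\llcurly z\llcurly C^+_n$, $C^-_n\llcurly C^-_{n+1}$, $C^+_{n+1}\llcurly C^+_n$, and $[C^-_n,C^+_n]_\cB\ssq B_{1/n}(F(z))$. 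Passing to $\cA$, by Lemma~\ref{l.filled_fibres} we get $\wh F(z)=\bigcap_n[C^-_n,C^+_n]^\filled_\cB$, and I claim that $[C^-_n,C^+_n]^\filled_\cB\to\wh F(z)$ in the Hausdorff metric: indeed $\diam$ of the complementary disks $U_w$ with $\bd U_w\ssq\cB$ is uniformly bounded by the constant $M$ of Lemma~\ref{l.component_bounded}, so $[C^-_n,C^+_n]^\filled_\cB\ssq B_{1/n+M'}(F(z))$ is too crude — instead I should observe that any disk $U_w$ meeting $[C^-_n,C^+_n]^\filled_\cB\sm[C^-_{n_0},C^+_{n_0}]^\filled_\cB$ has $\bd U_w$ meeting both $\cL(C^-_n)$ (or $\cR(C^+_n)$) type regions for large $n$, and by Lemma~\ref{l.disk_boundary} such a disk's boundary is squeezed into a thin interval, so it is contained in an arbitrarily small neighbourhood of $F(z)$ once $n$ is large; combined with $[C^-_n,C^+_n]_\cB\ssq B_{1/n}(F(z))$ this gives $[C^-_n,C^+_n]^\filled_\cB\ssq B_{\eps}(\wh F(z))$ for $n$ large. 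Since $\wh F(z)\ssq U$ is compact and $U$ is open, choose $\eps$ with $B_\eps(\wh F(z))\ssq U$ and then $n$ with $[C^-_n,C^+_n]^\filled_\cB\ssq U$. Now any fibre $\wh F'$ with $\wh F'\cap [C^-_n,C^+_n]^\filled_\cA\neq\emptyset$ — here $[C^-_n,C^+_n]^\filled_\cA$ is nothing but the fibre-interval $[\wh F(z^-_n),\wh F(z^+_n)]_\cA$ for appropriately chosen representatives — actually satisfies $\wh F'\ssq[C^-_n,C^+_n]^\filled_\cA\ssq U$: this is because $C^-_n\llcurly C^-_{n+1}$ etc.\ means $C^-_n\llcurly w\llcurly C^+_n$ for every $w$ in a neighbourhood-interval, so by definition of $F(w)$ via (\ref{e.fibres}) the whole fibre through such $w$ is contained in $[C^-_n,C^+_n]_\cB$ (resp.\ its fill-in). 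Hence $[C^-_n,C^+_n]^\filled_\cA$ is a saturated set contained in $U$, so it is contained in $V$; and since $(\wh F(z^-_n),\wh F(z^+_n))_\cA$ is relatively open in $\cA$ by the lemma preceding Lemma~\ref{l.closed_fibre_intervals} and contains $\wh F(z)$ (because $C^-_n\llcurly z\llcurly C^+_n$), this open set is a neighbourhood of $z$ inside $V$. Therefore $V$ is open, proving upper semicontinuity.

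**Loose ends and the main obstacle.** To finish I would also want to double-check that $[C^-_n,C^+_n]^\filled_\cA$ really equals a fibre-interval $[\wh F^-,\wh F^+]_\cA$, or at least is sandwiched between two such: since $C^-_n$ is itself a cut and cuts need not be fibres, I should instead use $F^-_n:=F(z)$-approximating fibres directly — pick any $z^-_n$ with $C^-_n\prec z^-_n\prec C^-_{n+1}$ (possible as there are uncountably many cuts in between, in particular points) and set $\wh F^-_n=\wh F(z^-_n)$, similarly $\wh F^+_n$; then $C^-_n\llcurly \wh F^-_n\llcurly z\llcurly\wh F^+_n\llcurly C^+_n$, so $[\wh F^-_n,\wh F^+_n]_\cA\ssq[C^-_n,C^+_n]^\filled_\cA$ and this fibre-interval is saturated, open on its interior, and contains $z$ — which is all I need. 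The one genuinely delicate point is the Hausdorff convergence $[C^-_n,C^+_n]^\filled_\cB\to\wh F(z)$, i.e.\ controlling the complementary disks that get "filled in"; this is where Lemma~\ref{l.disk_boundary} (a boundary of a complementary disk meeting a short cut-interval lies in a short cut-interval) together with the uniform diameter bound of Lemma~\ref{l.component_bounded} does the work, but one must phrase it carefully because the disks themselves are only uniformly bounded in size, not shrinking — the point is that a disk filled in at stage $n$ but not already present at a fixed earlier stage must have its boundary far out in the cut-order, hence (by Lemma~\ref{l.disk_boundary}) confined to a thin slice near $F(z)$, hence close to $\wh F(z)$ as a set even though not small in diameter. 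That estimate, carried out with the order structure $\prec,\llcurly$, is the heart of the proof; everything else is bookkeeping with the interval notation established above.
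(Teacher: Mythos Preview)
Your argument is correct and follows the same route as the paper: trap $\wh F(z)$ inside a closed fibre-interval contained in $U$, then observe that the corresponding open fibre-interval is a relatively open, $\cF_\cA$-saturated neighbourhood of $z$ inside $V$. The only difference is that you make the compactness step harder than it needs to be. You flag as ``the one genuinely delicate point'' the Hausdorff convergence $[C^-_n,C^+_n]_\cB^\filled\to\wh F(z)$ and reach for Lemmas~\ref{l.component_bounded} and~\ref{l.disk_boundary} to control the filled-in disks --- but none of that is necessary. The sets $[C^-_n,C^+_n]_\cB^\filled$ already form a \emph{nested decreasing} sequence of compacta (if $\bd U_w\ssq[C^-_{n+1},C^+_{n+1}]_\cB$ then $\bd U_w\ssq[C^-_n,C^+_n]_\cB$, so the fill-ins are nested too), and by Lemma~\ref{l.filled_fibres} their intersection is $\wh F(z)$. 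For any nested decreasing sequence of compacta $K_n$ with $\bigcap_n K_n$ contained in an open set $U$, elementary compactness gives $K_n\ssq U$ for large $n$; no estimate on the disks is needed. The paper's proof does exactly this, working directly with the fibre-intervals $[\wh F^-,\wh F^+]_\cA$ (which by the discussion after Lemma~\ref{l.filled_fibres} equal $[\bd\wh F^-,\bd\wh F^+]_\cB^\filled$) rather than detouring through cut-intervals first.
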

\proof Let $U\subseteq \cA$ be an open set. We need to show that
$V=\{z\in \cA: \wh{F}(z)\subseteq U\}$ is open in $\cA$.  Let $z\in
V$, so that $\wh{F}(z)\subseteq U$. From Lemma \ref{l.filled_fibres}
it is easy to verify that $\wh{F}(z)$ is the intersection of all sets
of the form $[\wh{F}^-, \wh{F}^+]_{\cA}$ with $\wh{F}^-,
\wh{F}^+\in \cF_\cA$ and $\wh{F}^-\llcurly \wh{F}(z)\llcurly
\wh{F}^+$, and since this can be seen as a decreasing intersection,
such $\wh{F}^+$ and $\wh{F}^-$ may be chosen satisfying $[\wh{F}^-,
\wh{F}^+]_\cA\subset U$. Since $(\wh{F}^-, \wh{F}^+)_{\cA}$ is open
in $\cA$ and contains $z\in V$, which was chosen arbitrarily, it
follows that $V$ is open in $\cA$ as claimed.  \qed\medskip

\begin{lem} \label{l.projected_decomposition}
  $\cF_\cA$ projects to an upper semicontinuous decomposition $\cF_A=\{\pi(F)\mid
  F\in \cF_\cA\}$ of $A$ into cellular continua, and it has uncountably many elements.
\end{lem}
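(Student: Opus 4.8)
The plan is to transfer the three asserted properties of the decomposition $\cF_{\cA}$ (obtained in the preceding lemmas) down to $A$ through the covering projection $\pi$, and to treat the cardinality statement separately. First I would record that $\cF_{\cA}$ is $T$-invariant: $T$ carries minimal generators of $\cB$ to minimal generators, hence permutes the family $\cC$ of cuts and preserves both $\prec$ and $\llcurly$ (the condition ``there exist uncountably many intermediate cuts'' is $T$-invariant), so $T(\wh F(z))=\wh F(Tz)$ for every $z\in\cA$. Since distinct elements of $\cF_{\cA}$ are disjoint (being a decomposition), $T$-invariance gives that $\cF_A$ is a genuine partition of $A$ into compacta: if $\pi(\wh F)$ meets $\pi(\wh F')$ then $\wh F'$ meets $T^n\wh F\in\cF_{\cA}$ for some $n\in\Z$, so $\wh F'=T^n\wh F$ and $\pi(\wh F')=\pi(\wh F)$. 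In particular each $\wh F\in\cF_{\cA}$ is disjoint from all of its nontrivial translates, so $\pi$ restricts to a continuous bijection, hence a homeomorphism, of $\wh F$ onto the continuum $\pi(\wh F)$.

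Next I would check that $\pi(\wh F)$ is cellular in the surface $\A$. As $\wh F=(\bd\wh F)^{\filled}$ is a nonseparating continuum of $\R^2$, it is cellular there, so it admits arbitrarily small open neighbourhoods homeomorphic to $\R^2$; since $\wh F$ is bounded and disjoint from all its translates, only finitely many translates come within a given distance, so I may choose one such neighbourhood $V\cong\R^2$ of $\wh F$ with $V\cap T^nV=\emptyset$ for every $n\neq0$. Then $\pi|_V$ is injective, $\pi(V)$ is an open disk in $\A$, and $\pi(\wh F)\subseteq\pi(V)$ is nonseparating therein, so $\pi(\wh F)$ is cellular in $\A$. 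Upper semicontinuity of $\cF_A$ is then inherited from that of $\cF_{\cA}$: for open $U\subseteq A$, the set $\{x\in A:\text{the }\cF_A\text{-fibre through }x\text{ lies in }U\}$ has $\pi$-preimage $\{z\in\cA:\wh F(z)\ssq\pi^{-1}(U)\}$, which is open in $\cA$ by the previous lemma, and $\pi$ is an open map.

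The hard part will be the uncountability of $\cF_A$: the cuts $C_x$, $x\in Q(\rho)$, coming from the dynamics form only a countable family, and if these were all of $\cC$ the relation $\llcurly$ — and with it $\cF_A$ — would be trivial, so I must manufacture more cuts. By hypothesis $A$ carries a homeomorphism $\varphi$ without periodic points in $A$, so $\rho$ is irrational by Theorem~\ref{t.poincare}, and the construction from its proof provides a strictly $\prec$-increasing family $x\mapsto C_x$, $x\in Q(\rho)$, of cuts (Lemma~\ref{l.generator_monotonicity}) together with the underlying minimal generators $G_x$. For each irrational $s$ I would take rationals $x_j\nearrow s$ and extract a Hausdorff limit $G_s$ of the $G_{x_j}$, which is legitimate by the uniform diameter bound (Corollary~\ref{c.generator_ends}); then $G_s\ssq\cB$ is a continuum with $G_s\cap TG_s\neq\emptyset$, hence a generator by Lemma~\ref{l.gen}, and I let $C'_s$ be the cut of some minimal generator contained in $G_s$. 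Passing to the limit in the nested inclusions $\wh\cL(C_{x_j})\subseteq\wh\cL(C_{x_{j'}})$ (and the analogous ones for $\wh\cR$), together with the insertion of intermediate rationals, yields $C_x\prec C'_s\prec C_{x'}$ whenever $x<s<x'$; hence $s\mapsto C'_s$ is injective on the irrationals, $\cC$ is uncountable, and for irrationals $s<s'$ one gets $C'_s\prec C'_t\prec C'_{s'}$ for every intermediate irrational $t$. Finally, fixing $z_s\in C'_s$, the uncountably many cuts $C'_t$ with $s<t<s'$ all separate $z_s$ from $z_{s'}$, so $z_s\llcurly z_{s'}$; by Lemma~\ref{l.intermediate_cuts} some cut $C$ satisfies $z_s\llcurly C\llcurly z_{s'}$, forcing $F(z_s)\ssq\cL(C)$ and $F(z_{s'})\ssq\cR(C)$ to be disjoint. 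Thus $s\mapsto F(z_s)$ is injective and $\cF_{\cB}$, and with it $\cF_B$ and $\cF_A$, is uncountable. The delicate points in this last step are that the Hausdorff limit is again a generator and that $C'_s$ lies in the correct place in $\prec$; both reduce to Lemmas~\ref{l.gen}, \ref{l.generator_monotonicity} and \ref{l.generator_inequality} once the diameter bound is in hand, and everything in the first two paragraphs is routine covering-space and plane-topology bookkeeping.
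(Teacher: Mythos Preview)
Your argument is essentially correct, but the route you take for the ``hard part'' differs from the paper's, and the paper's is considerably shorter. The paper simply invokes Theorem~\ref{t.poincare} to obtain a monotone map $h\colon A\to\kreis$ with lift $H\colon\cA\to\R$, and then observes that for every $x\in\R$ the set $G_x=H^{-1}([x,x+1])$ is a generator of $\cA$; hence each fibre $H^{-1}(x)=T^{-1}(G_x)\cap G_x$ contains a cut. Two things follow at once: there are uncountably many cuts (one inside each of the uncountably many fibres of $H$), and every element of $\cF_\cA$ is contained in a single fibre of $H$ (hence bounded and disjoint from all nontrivial $T$-translates). This single observation delivers uncountability, boundedness, and injective projection in one stroke. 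Your Hausdorff-limit construction of the cuts $C'_s$ for $s\notin Q(\rho)$ achieves the same end --- and the verification that the limit $G_s$ is again a generator with $C_x\prec C'_s\prec C_{x'}$ for $x<s<x'$ is correct --- but it re-derives by hand what the already-established semiconjugacy gives for free.

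Two smaller remarks. First, your terminology ``rationals $x_j$'' and ``irrational $s$'' is misleading: you mean $x_j\in Q(\rho)$ and $s\notin Q(\rho)$, and the elements of $Q(\rho)$ are themselves mostly irrational. Second, your logical order is inverted: the first two paragraphs assume each $\wh F$ is bounded (to conclude $\wh F\neq T^n\wh F$ for $n\neq0$ and to find the disk neighbourhood $V$), but boundedness requires that for each $z\in\cB$ there exist cuts $C^-\llcurly z\llcurly C^+$, which you only establish in the third paragraph. This is not a genuine gap --- once your third paragraph is in place the earlier steps are justified --- but it would be cleaner to state the uncountability/boundedness first, or simply to quote the monotone map $H$ as the paper does, which takes care of both issues simultaneously.
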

\proof Since by hypothesis there exists a self-homeomorphism $\varphi$
of $\A$ leaving $A$ invariant without periodic points in $A$, and we
may assume that $\varphi$ is orientation-preserving (replacing it by
$\varphi^2$ if necessary), Theorem~\ref{t.poincare} yields the existence
of a monotone map $h:A\to\kreis$, which lifts to a monotone map
$H:\cA\to \R$ that commutes with the translation $T$. It is easily
checked that for every $x\in \R$ the set $G_x=H^{-1}([x,x+1])$ is a
generator of $\cA$, and consequently every fibre
$H^{-1}(x)=T^{-1}(G_x)\cap G_x$ contains a cut. This further implies
that every fibre of $H$ contains (at least) one element of $\cF_\cA$.

In particular, the above yields that $T^k F\cap F=\emptyset$ for all
$F\in\cF_\cA$ and $k\in \Z\smin\{0\}$, hence $\cF_\cA$ projects to a
decomposition of $A$ with uncountably many elements, and each $F\in
\cF_\cA$ projects injectively into $A$. Since $F$ is a cellular
continuum, this implies that $\pi(F)$ is a cellular continuum as well.
Upper semicontinuity of $\cF_A$ then follows directly from that of
$\cF_\cA$ and from the fact that $\pi$ is an open map: if $U\subset A$ is a neighborhood of $F\in \cF_A$ in $A$, then the set of all elements of $\cF_A$ contained in $U$ is the projection of the set of elements of $\cF_\cA$ contained in the open set $\pi^{-1}(U) \subset \cA$, and therefore is open in $A$.
 \qed\medskip

Let $\cF$ be the decomposition of $A$ consisting of all elements of
$\cF_A$ together with all sets of the form $\{z\}$ with $z\notin A$.
Note that $\cF$ is a Moore decomposition of $\cA$, and therefore
Theorem \ref{t.moore} applies.

\begin{prop} \label{p.circle-image} The Moore projection $\Pi:\A\to\A$
  provided by Theorem \ref{t.moore} maps $A$ to an essential simple
  closed curve.
\end{prop}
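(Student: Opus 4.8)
The plan is to show that $\Pi(A)$ satisfies the curve criterion, Lemma~\ref{l.curve-criterion}, and then separately that it is essential. Since $A$ is a continuum and $\Pi$ is continuous, $\Pi(A)$ is a continuum, and it is non-degenerate because $\cF_A$ has uncountably many elements (Lemma~\ref{l.projected_decomposition}). So the core of the proof is to show that for any two distinct points $p,q\in\Pi(A)$ the set $\Pi(A)\smin\{p,q\}$ is disconnected.

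For this I pass to the universal cover. Let $\wh F_p,\wh F_q\in\cF_\cA$ be lifts of the distinct elements $\Pi^{-1}(p),\Pi^{-1}(q)\in\cF_A$; these are distinct fibres, hence $\prec$-comparable (Lemma~\ref{l.fibre_inequality}), and since each element of $\cF_\cA$ is a continuum, hence bounded, the orbit $\{T^k\wh F_p\}_{k\in\Z}$ is $\prec$-cofinal in both directions, so after replacing $\wh F_q$ by a suitable $T$-translate I may assume $\wh F_p\prec\wh F_q\prec T\wh F_p$. Put $I_1=(\wh F_p,\wh F_q)_\cA$ and $I_2=(\wh F_q,T\wh F_p)_\cA$; these are disjoint, open in $\cA$, and both contained in $(\wh F_p,T\wh F_p)_\cA$, on which $\pi$ is injective (for $j\neq0$, if $z\in(\wh F_p,T\wh F_p)_\cA$ then $\wh F(T^jz)=T^j\wh F(z)$ lies in $(T^j\wh F_p,T^{j+1}\wh F_p)_\cA$, which is disjoint from $(\wh F_p,T\wh F_p)_\cA$). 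Using the total order $\cdots\prec\wh F_p\prec\wh F_q\prec T\wh F_p\prec T\wh F_q\prec\cdots$ on the family $\{T^k\wh F_p,T^k\wh F_q\}_{k\in\Z}$, the set $\cA\smin\bigcup_{k\in\Z}T^k(\wh F_p\cup\wh F_q)$ is the disjoint union $\bigcup_k T^kI_1\cup\bigcup_k T^kI_2$; projecting by the open map $\pi$, which is injective on $I_1\cup I_2$, gives
\[
  A\smin\bigl(\pi(\wh F_p)\cup\pi(\wh F_q)\bigr)\ =\ \pi(I_1)\ \sqcup\ \pi(I_2),
\]
a disjoint union of open subsets of $A$. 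Each $\pi(I_j)$ is a union of elements of $\cF_A$, hence saturated for the Moore decomposition $\cF$, so (as $\Pi$ is a quotient map) $\Pi(\pi(I_1))$ and $\Pi(\pi(I_2))$ are disjoint open subsets of $\Pi(A)$ with union $\Pi(A)\smin\{p,q\}$. Finally, both are non-empty, and this is where $\llcurly$ is essential: since $\prec$ and $\llcurly$ coincide on distinct fibres, $\wh F_p\prec\wh F_q$ gives $\wh F_p\llcurly\wh F_q$, so Lemma~\ref{l.intermediate_cuts} produces a cut $C$ with $\partial\wh F_p\llcurly C\llcurly\partial\wh F_q$; tracing the definitions, any $w\in C$ satisfies $\partial\wh F_p\llcurly w\llcurly\partial\wh F_q$, forcing $\wh F_p\prec\wh F(w)\prec\wh F_q$, so the fibre through $w$ lies in $I_1$ and $\pi(I_1)\neq\emptyset$; symmetrically $\pi(I_2)\neq\emptyset$. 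Thus $\Pi(A)\smin\{p,q\}$ is a disjoint union of two non-empty open sets, hence disconnected, and Lemma~\ref{l.curve-criterion} shows $\Pi(A)$ is a simple closed curve.

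To see that $\Pi(A)$ is essential, note that off $A$ the decomposition $\cF$ is trivial, so $\Pi$ is injective on $\A\smin A$; since a fibre over a point of $\Pi(A)$ lies in $A$, the sets $\Pi(A)$ and $\Pi(\A\smin A)$ are disjoint, whence $\Pi(\A\smin A)=\A\smin\Pi(A)$, and every open $U\ssq\A\smin A$ is $\cF$-saturated, so $\Pi(U)$ is open. Hence $\Pi|_{\A\smin A}$ is a continuous open bijection, i.e.\ a homeomorphism, onto $\A\smin\Pi(A)$. But $\A\smin A$ has exactly two components, each homeomorphic to $\A$ and so not simply connected; were $\Pi(A)$ an inessential simple closed curve it would bound a disk, making one component of $\A\smin\Pi(A)$ simply connected --- impossible. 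So $\Pi(A)$ is an essential simple closed curve.

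The main obstacle is the order-theoretic bookkeeping for the fibres: arranging by a $T$-translation that $\cA$ minus the union of the $T$-orbits of $\wh F_p$ and $\wh F_q$ decomposes into exactly the $T$-orbits of the two intervals $I_1$ and $I_2$, and --- more importantly --- upgrading the bare comparability $\wh F_p\prec\wh F_q$ to the existence of a genuine fibre strictly between $\wh F_p$ and $\wh F_q$. The latter is exactly what the ``uncountably many intermediate cuts'' in the definition of $\llcurly$, via Lemma~\ref{l.intermediate_cuts}, is there to provide; without it $\pi(I_1)$ could be empty and the curve criterion would fail. Once these points are settled, everything else --- openness of the pieces, saturation, the curve criterion, essentiality --- is routine.
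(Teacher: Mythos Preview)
Your proof is correct and follows essentially the same approach as the paper: reduce to the curve criterion (Lemma~\ref{l.curve-criterion}), lift to $\cA$, arrange two fibres $\wh F_p\prec\wh F_q\prec T\wh F_p$, and use the two open intervals between them to disconnect $A\smin(\pi(\wh F_p)\cup\pi(\wh F_q))$. You supply more detail than the paper on two points it leaves implicit---the non-emptiness of the two pieces (via $\llcurly$ and Lemma~\ref{l.intermediate_cuts}) and the essentiality of $\Pi(A)$---both of which you handle correctly.
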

\proof Due to Lemma \ref{l.curve-criterion} it suffices to show that
if $x,y \in \Pi(A)$ are different points, then $\Pi(A)\sm \{x,y\}$ is
disconnected.  By Lemma \ref{l.monotone_preimage}, this is the same as
saying that for any pair of distinct elements $F_1, F_2$ of $\cF_A$,
the set $A\sm (F_1\cup F_2)$ is disconnected. To show this, let
$\wh{F}_1, \wh{F}_2\in \cF_\cA$ be connected components of
$\Pi^{-1}({F}_1)$ and $\Pi^{-1}(F_2)$. Note that since
$T^{-1}F\llcurly F\llcurly TF$ for all $F\in \cF_\cA$, the projection
$\Pi$ is injective on $(\wh F_1, T\wh F_1)_\cA$, and there is
exactly one $k\in \Z$ such that $\wh F_1\llcurly T^k\wh F_2\llcurly
T\wh F_1$.  If $U=(\wh{F}_1, T^k\wh{F}_2)_{\cA}$ and
$V=(T^k\wh{F}_2,T\wh{F}_1)_{\cA}$, we have that $\Pi(U)$ and
$\Pi(V)$ are disjoint nonempty open subsets of $A$, and $\Pi(U)\cup
\Pi(V) = A\sm (F_1\cup F_2)$, proving that the latter set is
disconnected.  \qed\medskip

\begin{lem}\label{l.core}
  Suppose $h:A\to\kreis$ is a monotone map. Then the fibres $h^{-1}(x)$,
  $x\in\kreis$, are saturated with respect to $\cF_A$. 
\end{lem}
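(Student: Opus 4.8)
The plan is to lift $h$ to the universal cover and show its lift is constant on every element of $\cF_\cA$; projecting, this gives that $h$ is constant on every element of $\cF_A$, which is precisely the assertion that the fibres $h^{-1}(x)$ are saturated with respect to $\cF_A$. A nonconstant monotone map of an essential circloid into $\kreis$ is a surjection of degree $\pm1$ — a different degree would produce a disconnected fibre — so, disregarding the trivial constant case and replacing $h$ by $-h$ if necessary, $h$ lifts to a continuous, onto, monotone map $H\colon\cA\to\R$ with $H\circ T=H+1$. Fix $z_0\in\cB$ and write $\wh F(z_0)=F(z_0)^\filled$ for the element of $\cF_\cA$ through $z_0$, where $F(z_0)=\bd\wh F(z_0)\ssq\cB$ (Lemma~\ref{l.filled_fibres}). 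For a bounded component $U$ of $\cA\smin\cB$ with $\bd U\ssq F(z_0)$ one has $H(U)=H(\bd U)$: otherwise there is $t\notin H(\bd U)$ with $H^{-1}(t)\cap U\neq\emptyset$, but $H^{-1}(t)$ is a continuum separating the two ends of $\cA$, hence essential, so it cannot be confined to the inessential disk $U$, while $H^{-1}(t)\cap\bd U=\emptyset$ forces exactly that. Hence it suffices to prove that $H$ is constant on $F(z_0)$.

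Assume for a contradiction that $H(F(z_0))=[a,b]$ with $a<b$. The key input is the fact from the proof of Lemma~\ref{l.projected_decomposition} — whose argument uses only that $H$ is a monotone $T$-commuting map $\cA\to\R$ — that every fibre $H^{-1}(s)$ contains a cut $\tilde C_s\in\cC$; since distinct fibres are disjoint, $s\mapsto\tilde C_s$ is injective on $(a,b)$, so we obtain uncountably many cuts, each with $H\equiv s$ on $\tilde C_s$ and $H^{-1}(s)\cap\wh F(z_0)\neq\emptyset$. For a cut $C$ with minimal generator $G(C)$, set $a(C)=\inf H(\wh\cR(C)^\filled)$ and $b(C)=\sup H(\wh\cL(C)^\filled)$; from $H(T^kG(C))=H(G(C))+k$ and $G(C)\cap TG(C)\neq\emptyset$ one gets $H(\wh\cR(C)^\filled)=[a(C),\infty)$, $H(\wh\cL(C)^\filled)=(-\infty,b(C)]$ and that $a,b$ are $\prec$-nondecreasing, while $\tilde C_s\ssq\wh\cR(\tilde C_s)^\filled\cap\wh\cL(\tilde C_s)^\filled$ gives $a(\tilde C_s)\leq s\leq b(\tilde C_s)$. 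If $\tilde C_s\prec\wh F(z_0)$ then $\wh F(z_0)\ssq\wh\cR(\tilde C_s)^\filled$ forces $a(\tilde C_s)\leq a$, and symmetrically $\wh F(z_0)\prec\tilde C_s$ forces $b\leq b(\tilde C_s)$. The crux — proved below — is that $a(C)=b(C)$ for every cut; granting this, $a(\tilde C_s)=b(\tilde C_s)=s$, so the first alternative gives $s\leq a$ and the second $b\leq s$, both impossible. Thus $\tilde C_s$ is $\prec$-incomparable to $\wh F(z_0)$, and this forces $\tilde C_s\ssq F(z_0)$: indeed $z_0\llcurly\tilde C_s$ would yield, via Lemma~\ref{l.intermediate_cuts}, a cut $C^+$ with $z_0\llcurly C^+\llcurly\tilde C_s$ and hence $\wh F(z_0)\ssq\wh\cL(C^+)^\filled\ssq\cL(\tilde C_s)^\filled$, i.e.\ $\wh F(z_0)\prec\tilde C_s$; and $\tilde C_s\llcurly z_0$ is impossible for the same reason. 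So for every $s\in(a,b)$ the cut $\tilde C_s$ lies in the single fibre $F(z_0)$. Since $a$ is $\prec$-nondecreasing and $a(\tilde C_s)=s$, the injection $s\mapsto\tilde C_s$ is $\prec$-increasing; hence for $a<s_1<s_2<b$ the uncountably many cuts $\tilde C_s$, $s\in(s_1,s_2)$, lie strictly $\prec$-between $\tilde C_{s_1}$ and $\tilde C_{s_2}$, so $\tilde C_{s_1}\llcurly\tilde C_{s_2}$. As $z_0$ is $\prec$-comparable to every cut, it lies (weakly) below $\tilde C_{s_1}$, or (weakly) above $\tilde C_{s_2}$, or strictly between them; in each case the uncountably many cuts between $\tilde C_{s_1}$ and $\tilde C_{s_2}$ give either $z_0\llcurly\tilde C_{s_2}$ or $\tilde C_{s_1}\llcurly z_0$, contradicting that $z_0$, $\tilde C_{s_1}$, $\tilde C_{s_2}$ all lie in the same fibre $F(z_0)$.

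It remains to prove $a(C)=b(C)$ for every cut, equivalently that $H(G)$ is an interval of length exactly $1$ for every minimal generator $G$ of $\cB$. The bound $\geq1$ is immediate from $G\cap TG\neq\emptyset$ (Lemma~\ref{l.gen}). For $\leq1$ one argues by minimality: if $H(G)=[p,q]$ with $q>p+1$, one trims $G$ along a level set of $H$ to extract a strictly smaller generator, contradicting minimality. The tools are that $H^{-1}((-\infty,t])$ and $H^{-1}([t,\infty))$ are connected for all $t$ (Lemma~\ref{l.monotone_preimage}), and that $H^{-1}(t)\cap\cB\neq\emptyset$ for all $t$ — because $H^{-1}(t)$ is an essential continuum in $\cA$ and an essential continuum inside a circloid must meet its boundary, since its intersection with the interior, a union of disks of uniformly bounded diameter by Lemma~\ref{l.component_bounded}, would otherwise be inessential. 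I expect this unit-length property of minimal generators — making the level-set trimming go through with the connectedness it needs — to be the one genuinely delicate point; everything else is bookkeeping with the order $\prec$ and the relation $\llcurly$.
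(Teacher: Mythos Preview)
Your approach differs substantially from the paper's and carries a genuine gap.

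The paper's proof is a four–line direct argument. After lifting $h$ to $H\colon\cA\to\R$ with $H\circ T=H+1$, one uses (as you also note) that every fibre $H^{-1}(t)$ contains a cut. From this the paper deduces immediately that if $z,z'\in\cB$ with $H(z)<H(z')$, then $z\llcurly z'$: the uncountably many cuts $\tilde C_t\subset H^{-1}(t)$, $t\in(H(z),H(z'))$, sit $\prec$-between $z$ and $z'$. Hence distinct $H$-fibres lie in distinct $\cF_\cB$-fibres, i.e.\ each $F\in\cF_\cB$ is contained in a single $H$-fibre. Filling in gives the same for $\cF_\cA$, and projecting gives the lemma. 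No unit-length property, no contradiction argument, no case analysis on $\prec$ is required.

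Your route is a contradiction argument that hinges on the claim $a(C)=b(C)$ for every cut $C$, equivalently that $H(G)$ has length exactly $1$ for every minimal generator $G$ of $\cB$. You acknowledge this as ``the one genuinely delicate point'' and only sketch it (``trim $G$ along a level set''). That sketch is not a proof: trimming $G$ to $G\cap H^{-1}([p,t])$ need not be connected, and taking a connected component containing a point with $H$-value $p$ and using boundary bumping to reach $H^{-1}(t)$ still does not produce a subcontinuum $G'\subsetneq G$ with $G'\cap TG'\neq\emptyset$ --- you would need a point $w\in G'$ with $Tw\in G'$, and there is no evident reason one exists. In fact, the statement ``$H$ is constant on every cut'' is equivalent to your unit-length claim, and since every cut lies inside a single fibre of $\cF_\cB$, this is a \emph{special case} of what you are trying to prove. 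So the argument, as written, is circular unless you give an independent proof of the unit-length property, which you do not.

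The fix is to bypass the unit-length property altogether: the cut $\tilde C_t$ in $H^{-1}(t)$ arises from a minimal generator contained in $H^{-1}([t,t+1])$, so that $\wh\cR(\tilde C_t)\subset H^{-1}([t,\infty))$ and $\wh\cL(\tilde C_t)\subset H^{-1}((-\infty,t])$; this places any $z$ with $H(z)<t$ in $\cL(\tilde C_t)$ and any $z'$ with $H(z')>t$ in $\cR(\tilde C_t)$. That is exactly the paper's one-line implication $H(z)<H(z')\Rightarrow z\llcurly z'$, and it renders the entire contradiction machinery (the functions $a(C)$, $b(C)$, the $\prec$-monotonicity argument, and the final case split) unnecessary.
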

\proof The map $h$ lifts to a monotone map $H:\cA\to\R$ that commutes
with the deck translation $T$. As argued in the proof of
Lemma~\ref{l.projected_decomposition}, every fibre $H^{-1}(x)$
contains at least one cut. If $z\in H^{-1}(x)\cap \cB$ and $z'\in
H^{-1}(y)\cap \cB$ with $x<y$, this implies that $z\llcurly z'$ and
thus $F(z)\neq F(z')$. Hence, no two points of the same fibre of
$\cF_\cB$ can be contained in different fibres of $H$. In other words,
the fibres of $H_{|\cB}$ are saturated with respect to $\cF_\cB$. By
monotonicity, $F\ssq H^{-1}(x)$ implies $\wh F=F^\filled\ssq
H^{-1}(x)$, such that the fibres of $H$ are also saturated with
respect to $\cF_A$.  \qed\medskip

\subsection{Proof of Theorem~\ref{t.universal_factor_map}}

Denote by $\Pi:\A\to\A$ the Moore projection associated to
$\cF_\cA\cup\{x\in\A\mid x\notin A\}$, so it satisfies assertion (i) of Theorem~\ref{t.universal_factor_map}. Assertions (ii) and (iii)
follow from Proposition \ref{p.circle-image} and Sch\"onflies' theorem
(by post-composing with a homeomorphism of $\A$ which maps $\Pi(A)$ to
$\T$).  Since the lift $\Phi$ of
any homeomorphism $\varphi$ of $\A$ leaving $A$ invariant has to map
minimal generators to minimal generators, it follows from the above
constructions that $\Phi$ permutes the elements of $\cF_\cA$.  This
allows to define $\tilde\varphi:\A\to\A$ by requiring that
$\Pi^{-1}(\tilde \varphi(z))=\varphi(\Pi^{-1}(z))$. If $U\ssq \A$ is
open, then $\tilde\varphi^{-1}(U) = \Pi(\varphi^{-1}(\Pi^{-1}(U))$ is
open as well, since $\Pi$ maps saturated open sets to open sets.
Hence, $\tilde \varphi$ is continuous and assertion (iv) holds. 
To prove assertion (v), suppose that $h\colon A\to \kreis$ is
a monotone surjection, then by Lemma \ref{l.core}, $h^{-1}(x)$ is
saturated with respect to $\cF_A$, and thus $h$ induces a map
$\til{h}\colon \T\to \kreis$ by $\til{h}(x) = h(x')$ where $x'\in
\Pi^{-1}(x)$ is arbitrary. Note that $\til{h}$ is continuous since for
any open set $U\subset \kreis$ we have $\til{h}^{-1}(U) =
\Pi(h^{-1}(U))$ which is again open since it is the image of a saturated
open set. If $h$ is a semiconjugation from $\varphi$ to $R_\rho$, then
given $x\in \kreis$ and $x'\in \Pi^{-1}(x)$ we have
$$\tilde h\circ\tilde\varphi(x)=\tilde h\circ\tilde\varphi\circ\Pi(z) =\tilde
h\circ\Pi\circ\varphi(z)=h\circ\varphi(z)=R_\rho\circ h(z)=R_\rho\circ
\tilde h(x). $$
completing the proof if (v).
\qed

\subsection{Proof of Corollary \ref{c.uniqueness}}

Suppose $h_1,h_2$ are two semiconjugacies from $\varphi_{|A}$ to the
irrational rotation $R_\rho$, where $\rho$ is the rotation number of
$\varphi$ on $A$. By post-composing with a rotation, we may assume
that there exists $z_0\in A$ with $h_1(z_0)=h_2(z_0)$.

Let $\tilde\varphi$ be the homeomorphism of $\A$ from part (iv) of
Theorem~\ref{t.universal_factor_map}, so that $\Pi\circ
\varphi=\tilde\varphi\circ \Pi$ and $\tilde\varphi(\T)=\T$, and let
$\til{h}_i$ the maps such that $h_i = \til{h}\circ \Pi$ from part (v)
of the same theorem. Then $\til{h}_i$ semiconjugates
$\til{\varphi}|_{\T}$ to the irrational rotation $R_\rho$ of $\T$.
Since the semiconjugacies in the Poincar\'e Classification Theorem are
unique up to post-composition by a rotation, this yields that $\tilde
h_1=\tilde h_2$ and thus $h_1=h_2$. \qed

\section{Almost automorphic minimal continua}

A homeomorphism $f\colon X\to X$ of a metric space is called almost
automorphic if there exists $x\in X$ such that, whenever the limit
$\til{x}=\lim_{k\to \infty} f^{n_k}(x)$ exists for some sequence
$n_k\to \infty$, then $x=\lim_{k\to \infty} f^{-n_k}(\til{x})$.
Further, $f$ is almost periodic if for every $\epsilon>0$, the set
$\{n\in \Z : \forall x\in X,\, d(f^n(x),x)<\epsilon\}$ is syndetic
(\ie has uniformly bounded gaps).  The Veech Structure Theorem
\cite{veech1965almost} asserts that $f$ is almost automorphic if and
only if $f$ is semiconjugate to an almost periodic map of some space
$Y$ by means of an almost 1-1 continuous surjection, \ie a continuous
surjection $h\colon X\to Y$ for which the set of points of $Y$ with a
unique preimage is dense.

\subsection{Proof of Theorem \ref{t.almost_automorphy}}

Let $\varphi\colon \A\to \A$ be a homeomorphism, $B\subset \A$ is an
essential $\varphi$-invariant cobasin boundary without periodic
points, and $x_0\in B$ a recurrent bi-accessible point. Since
$x_1=\varphi(x_0)$ is also bi-accessible, we may find an inessential
simple closed curve $\gamma\subset \A$ intersecting $B$ exactly at
$x_0$ and $x_1$. If $V_0$ and $V_1$ are the two connected components
of $\A\sm \gamma$ and $B_i = V_i\cap B$, it is easy to verify that
$B_i\cup \{x_0, x_1\}$ is a continuum for $i\in\{0,1\}$. Thus $B$ is
decomposable, and Theorem \ref{t.universal_factor_map} implies that
there exists a $\Pi\colon \A \to \A$ mapping the circloid $A$ whose
boundary is $B$ to the circle $\T=\kreis\times \{0\}$ and inducing a
map $\til{\varphi}\colon \A\to \A$ which satisfies
$\tilde\varphi\circ\Pi=\Pi\circ\varphi$ and preserves $\T$. Let $U^-$
and $U^+$ denote the connected components of $\A\sm B$ which are
unbounded below and above, respectively, and let $C$ be a simple arc
joining a point of $U^-$ to a point of $U^+$ such that $C\cap
B=\{x_0\}$. Then $\til C = \Pi(C)$ is an arc joining a point below $\T$ to
a point above $\T$ and intersecting $\T$ exactly at
$\til{x}_0=\Pi(x_0)$. By continuity, the compact set
$\Pi^{-1}(\til{C})$ is contained in the closure of
$\Pi^{-1}(\til{C}\sm \{\til{x}_0\})$, and since $\Pi|_{U^-\cup U^+}$
is an injective map onto $\A\sm \T$ we have that $\Pi^{-1}(\til{C}\sm
\{\til{x}_0\})=C\sm \{x_0\}$. Thus $\Pi^{-1}(\til{C})\subset C$, and
$\Pi^{-1}(\til{x}_0) \subset C\cap B = \{x_0\}$.

Hence, $\Pi^{-1}(\Pi(x_0)) =\{x_0\}$, and the same is true for any
point in the orbit of $x_0$. Since $\til{\varphi}\colon \T\to \T$ is a
homeomorphism of the circle without periodic points, the classic
Poincar\'e theory implies that it is semi-conjugate to an irrational
rotation $R_\rho$ of the circle by means of a continuous monotone
surjection $h\colon \T\to \T^1$ which is injective in the nonwandering
set of $\til{\varphi}$. In particular, since $\til{x}_0$ is recurrent,
$h^{-1}(h(\til x_0)) = \{\til x_0\}$, and so $h' = h\circ \Pi$ is an
almost 1-1 semiconjugation between $\varphi|_B$ and $R_\rho$, showing
that $\varphi|_B$ is almost-automorphic.  \qed

\subsection{Proof of Corollary \ref{c.almost_automorphy}}

Let $U^-$ and $U^+$ be the connected components of $\A\sm X$ unbounded
below and above, respectively. Then $\bd U^- \cup \bd U^+$ is a
compact invariant subset of $X$ and therefore is equal to $X$. In
particular $\bd U^-\cap \bd U^+\neq \emptyset$, and since that is also
a compact invariant set we deduce $X=\bd U^-\cap \bd U^+$. Therefore,
$X$ is a cofrontier, and the Corollary follows from Theorem
\ref{t.almost_automorphy}.  \qed

\section{The Kuratowski decomposition} \label{Kuratowski}

Let $\Lambda$ be a continuum on the sphere $\mathbb{S}^2$ which is the
common boundary of two open simply connected sets. Using our
terminology, this is the same as saying that $\Lambda$ is a cobasin
boundary.

In \cite{kura-frontier}, Kuratowski studied the following topological
problem: when can we find a monotone surjection from $\Lambda$ onto
the circle? Following the method used in \cite{kura-II} to study a
similar question for irreducible continua, he defined a decomposition
into \emph{layers}\footnote{Translation of the original term
  \emph{tranches} (in french).} as follows: A fundamental layer of
$\Lambda$ is any subset of $\Lambda$ which is maximal with the
property of being a continuum which is the union of at most countably many subcontinua of
$\Lambda$, each of which either is indecomposable or has empty
interior in $\Lambda$.  When there is a unique fundamental layer, the
continuum $\Lambda$ is called \emph{monostratic}.

The main results of \cite{kura-frontier} implies that $\Lambda$ is
non-monostratic if and only if there exists some monotone continuous
surjection from $\Lambda$ onto the circle. In fact, if $\Lambda$ is
non-monostratic then the fundamental layers form a
monotone upper semicontinuous decomposition of $\Lambda$, and the
quotient space by this decomposition is a simple closed curve. In
addition, the decomposition into fundamental layers is the finest
upper semicontinuous monotone decomposition of $\Lambda$ into
subcontinua with the property of having the circle as a quotient
space, in the sense that any other such decomposition has its elements
saturated by fundamental layers.

This can be stated in terms of maps as follows:
\begin{thm} [Kuratowski]\label{th:kura}
  If $\Lambda$ is a non-monostratic cobasin boundary in
  $\mathbb{S}^2$, then there exists a monotone continuous surjection
  $\Pi\colon \Lambda\to \T^1$ such that for any other monotone
  continuous surjection $P\colon \Lambda \to \T^1$, there exists a map
  $\phi\colon \Lambda \to \Lambda$ such that $P = \phi\circ \Pi$.
\end{thm}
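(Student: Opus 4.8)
The plan is to read everything off the two properties of the fundamental layer decomposition recalled above: for non-monostratic $\Lambda$ it is a monotone upper semicontinuous decomposition whose quotient is a simple closed curve, and it is the \emph{finest} upper semicontinuous monotone decomposition of $\Lambda$ into subcontinua with circle quotient. First I would let $\cF$ be the decomposition of $\Lambda$ into fundamental layers, fix a homeomorphism of the quotient $\Lambda/\cF$ onto $\T^1$, and take $\Pi\colon\Lambda\to\T^1$ to be the quotient projection followed by this homeomorphism. Then $\Pi$ is continuous and surjective, being a quotient map composed with a homeomorphism, and it is monotone because its fibres are precisely the fundamental layers, which are continua by definition. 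This produces the required universal map, and it remains to establish the factoring property.

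Given an arbitrary monotone continuous surjection $P\colon\Lambda\to\T^1$, the next step is to check that its fibre decomposition $\cG=\{P^{-1}(t)\mid t\in\T^1\}$ belongs to the class to which the finest-property applies. Each $P^{-1}(t)$ is closed in the compact space $\Lambda$, hence compact, and is connected because $P$ is monotone, so $\cG$ is a decomposition into subcontinua; it is upper semicontinuous because for open $U\ssq\Lambda$ the union of the elements of $\cG$ contained in $U$ equals $\Lambda\sm P^{-1}\!\big(P(\Lambda\sm U)\big)$, which is open since $P(\Lambda\sm U)$ is compact and hence closed; and $P$ induces a continuous bijection $\Lambda/\cG\to\T^1$, which is a homeomorphism (a continuous bijection from a compact space to a Hausdorff space), so the quotient of $\cG$ is a circle. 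By the finest-property, every element of $\cG$ is then saturated by fundamental layers, i.e.\ is a union of elements of $\cF$; since $\cF$ and $\cG$ are both partitions of $\Lambda$, this forces each fundamental layer to lie inside a single set $P^{-1}(t)$, that is, $P$ is constant on the fibres of $\Pi$.

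Consequently there is a well-defined map $\phi\colon\T^1\to\T^1$ with $\phi(s)=P(z)$ for any $z\in\Pi^{-1}(s)$, so that $P=\phi\circ\Pi$. Its continuity is automatic: $\Pi$ is a continuous surjection from a compact space onto a Hausdorff space, hence closed, hence a quotient map, so $\phi$ is continuous exactly because $\phi\circ\Pi=P$ is. In fact $\phi$ is a monotone surjection of the circle, since it is onto ($P$ is) and $\phi^{-1}(t)=\Pi\big(P^{-1}(t)\big)$ is a continuous image of a continuum.

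I do not expect a genuine obstacle here: the substantive content — existence and minimality of the fundamental layer decomposition — is supplied by \cite{kura-frontier}, and the only mildly delicate step is the bookkeeping above verifying that the fibre decomposition of a general $P$ is admissible for the finest-property, which is a routine compactness argument. If instead one wanted a proof independent of \cite{kura-frontier}, the natural route would be to pass to the annular setting (puncturing the two complementary disks, so that $\Lambda$ becomes an essential decomposable cobasin boundary — non-monostraticity forces decomposability, since an indecomposable continuum is its own fundamental layer), take $\Pi$ to be the Moore projection of Section~\ref{UniversalFactorMap} built from the cut decomposition $\cF_B$, and identify the fibres $\cF_B$ with the fundamental layers; in that approach the real work is re-deriving the finest-property topologically, together with a purely topological argument — using the presence of uncountably many cuts — to replace the dynamical step ruling out $T^kF\cap F\neq\emptyset$.
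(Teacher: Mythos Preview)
The paper does not give its own proof of this theorem: it is stated as a result of Kuratowski, introduced by the phrase ``This can be stated in terms of maps as follows'' immediately after recalling the two key properties of the fundamental layer decomposition from \cite{kura-frontier}. Your argument is precisely the routine translation between the decomposition-theoretic statement and the map-theoretic one, and it is correct (including the verification that the fibre decomposition of an arbitrary monotone $P$ is upper semicontinuous with circle quotient, so that the finest-property applies). Note that the codomain of $\phi$ in the theorem statement is a typo for $\T^1$, which you have silently and correctly fixed.

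Your closing remark about an alternative, self-contained route via the cut decomposition of Section~\ref{UniversalFactorMap} is also on point: the paper itself observes that the fibres $\cF_A$ coincide with Kuratowski's layers, and that the only place dynamics enters in Section~\ref{UniversalFactorMap} is to guarantee uncountably many elements of $\cF_A$ (equivalently, non-monostraticity), so replacing that dynamical step by the topological hypothesis would yield an independent proof.
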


There are many works which extend to a more general setting the
concept of finding a finest monotone upper semicontinuous
decomposition with the property of having a quotient space with a
given property (what is often called the \emph{core} decomposition
with the given property); see for instance \cite{charatonik-1,
  fitzgerald-swingle, rakowski-1, vought-1} and references therein.

Using the previous theorem together with Moore's theorem, Kuratowski
obtains a result which can be reworded as follows \cite[Theorem
II]{kura-frontier}:

\begin{thm}\label{th:kura-2} If $\Lambda$ is a circloid in
  $\mathbb{S}^2$ with non-monostratic boundary, then there exists a
  monotone continuous surjection $\Pi\colon \mathbb{S}^2\to
  \mathbb{S}^2$ which maps $\Lambda$ onto the equator and is injective
  on $\mathbb{S}^2\sm \Lambda$.
\end{thm}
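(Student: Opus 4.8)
The plan is to mirror the construction of the universal factor map in Section~\ref{UniversalFactorMap}, but starting from the layer decomposition handed to us by Theorem~\ref{th:kura} rather than from a dynamical one. Write $B=\bd\Lambda$; this is a non-monostratic cobasin boundary, so by Theorem~\ref{th:kura} and the discussion preceding it the fundamental layers form a monotone upper semicontinuous decomposition $\cL$ of $B$ whose quotient $B/\cL$ is a circle; denote by $q\colon B\to B/\cL$ the quotient map. Since $\Lambda$ is a circloid, $\mathbb S^2\sm\Lambda$ has exactly two components $V_1,V_2$, both open disks with $\bd V_1=\bd V_2=B$, and the remaining complementary domains $U_j$ of $B$ are precisely the connected components of $\inte\Lambda$; each $U_j$ is an open disk and $\bd U_j$ is a proper subcontinuum of $B$, which therefore does not separate $V_1$ from $V_2$ (because $B$ is a cobasin boundary and hence minimal among separating continua).

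The heart of the argument is the absorption statement that for every complementary domain $U_j$ of $\inte\Lambda$, the continuum $\bd U_j$ is contained in a single fundamental layer. This is the exact topological counterpart of Lemmas~\ref{l.disk_boundary} and~\ref{l.fibres_and_disks}, and I would establish it by re-running the component-separation technique underlying those lemmas in the present setting: $\bd U_j$ is connected and $U_j$ is a disk confined to a region bounded away from $V_1$, so if $q(\bd U_j)$ contained two points one could cut $B$ along two suitably chosen layers into closed pieces whose union leaves $\bd U_j$ on ``both sides'', and Lemma~\ref{l.component_separation} (applied after puncturing $\mathbb S^2$ at a point of $V_1$) would place $U_j$ in the unbounded complementary component of that union, contradicting $\bd U_j$ being contained in it. This is in essence Kuratowski's argument in \cite{kura-frontier}, and it is the step I expect to require the most care. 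Granting it, set $\hat L=L\cup\bigcup\{U_j:\bd U_j\subseteq L\}$ for each layer $L\in\cL$. Each $\hat L$ is connected, and closed (a point accumulated by distinct absorbed domains $U_j$ is joined to one of them by a short segment, hence meets $\bd U_j\subseteq L$, which is closed), and $\hat L\cap B=L$, so the sets $\hat L$ partition $\Lambda$; moreover $\hat L$ is non-separating, since it lies in $\Lambda$ and a proper subcontinuum of $B$ does not separate $V_1$ from $V_2$. Hence $\cF:=\{\hat L:L\in\cL\}\cup\{\{z\}:z\notin\Lambda\}$ is a partition of $\mathbb S^2$ into cellular continua, and its upper semicontinuity is obtained from that of $\cL$ exactly as in the corresponding step of Section~\ref{UniversalFactorMap}.

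Finally, Moore's theorem (Theorem~\ref{t.moore}) applied to $\cF$ yields a continuous surjection $\Pi\colon\mathbb S^2\to\mathbb S^2$, homotopic to the identity, with every point-inverse in $\cF$; in particular $\Pi$ is monotone and injective on $\mathbb S^2\sm\Lambda=V_1\cup V_2$. Since $\Pi(U_j)\subseteq\Pi(\bd U_j)\subseteq\Pi(B)$ one has $\Pi(\Lambda)=\Pi(B)$, and the fibres of $\Pi|_B$ are precisely the layers, so $\Pi|_B$ is the quotient map onto $B/\cL\cong\T^1$; thus $\Pi(\Lambda)$ is a simple closed curve (one may also invoke Lemma~\ref{l.curve-criterion}) whose two complementary components are $\Pi(V_1)$ and $\Pi(V_2)$. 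Post-composing $\Pi$ with a homeomorphism of $\mathbb S^2$ provided by the Sch\"onflies theorem that carries $\Pi(\Lambda)$ onto the equator completes the proof. The main obstacle is the absorption statement of the middle paragraph; everything after it is bookkeeping parallel to Section~\ref{UniversalFactorMap}.
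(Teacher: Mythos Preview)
The paper does not actually prove Theorem~\ref{th:kura-2}; it is stated as Kuratowski's result \cite[Theorem~II]{kura-frontier} with only the one-line indication ``using the previous theorem together with Moore's theorem'' and the remark that $\Pi$ arises from the decomposition of $\mathbb{S}^2$ into singletons outside $\Lambda$ together with the filled layers of $\bd\Lambda$. Your proposal is precisely a fleshing-out of that sketch: you take the layer decomposition from Theorem~\ref{th:kura}, fill each layer with the interior disks it bounds, and apply Moore's theorem, which is exactly the route the paper (and Kuratowski) indicate. So your approach is correct and coincides with what the paper points to; the absorption statement you single out as the crux is indeed the only nontrivial step, and your outline of its proof via Lemma~\ref{l.component_separation} is the right mechanism (it is the purely topological analogue of Lemma~\ref{l.disk_boundary}).
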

In fact, it can easily be verified that the decomposition of $\Lambda$
into fibers of $\Pi$ is the core decomposition with respect of having
the circle as quotient space. The map $\Pi$ can be characterized by
considering the decomposition of $\mathbb{S}^2$ into points of
$\mathbb{S}^2\sm \Lambda$ together with the ``filled'' layers of $\bd
\Lambda$ (\ie the union of each layer $L$ with all the connected
components of $\mathbb{S}^2\sm L$ which are disjoint from $\Lambda$).
This decomposition turns out to be the same given in the proof of
Theorem \ref{t.universal_factor_map}.  

In the proof of Theorem \ref{t.universal_factor_map}, we could have used Theorems~\ref{th:kura} and
\ref{th:kura-2}, but we chose a self-contained proof. We note that the only part of Section~\ref{UniversalFactorMap} where dynamics appears is to guarantee that there are uncountably many elements the decomposition $\cF_A$ (Lemma \ref{l.projected_decomposition}). Since it can be shown
without too much effort that uncountability of $\cF_\cA$ is equivalent to non-monostraticity, the results of Section~\ref{UniversalFactorMap} essentially contain an alternative proof of Kuratowski's results (if one assumes non-monostraticity instead of the dynamical hypothesis). 

In this context, the main contribution of Theorem
\ref{t.poincare} could also be stated as follows: if a circloid is invariant by a homeomorphism, has decomposable boundary and has no periodic points, then its boundary is non-monostratic.

\section{An example with large fibers}\label{AnosovKatokExample}

In this section we prove Theorem \ref{t.example}. More specifically,
we will prove that there exists a $C^\infty$ diffeomorphism $f\colon
\A\to \A$ and a monotone continuous surjection $\Pi\colon \A\to \A$
such that:
\begin{itemize}
\item $f$ leaves invariant an essential decomposable cofrontier $\Lambda$;
\item $f|_{\Lambda}$ is minimal;
\item $\Pi$ semi-conjugates $f$ with an irrational rotation of $\A$:
  $\Pi f = R_\rho \Pi$ for some $\rho\notin\Q$;
\item Each fiber of $\Pi|_{\Lambda}$ is a continuum with diameter at
  least $1/4$.
\end{itemize}

In what follows, intervals in $\T^1$ are assumed to be positively
oriented, so for $a,b\in \T^1$ the interval $(a,b)$ is the component
of $\T^1\sm \{a,b\}$ which is positively oriented and $(b,a)$ the
remaining one (and similarly for closed intervals).

\subsection{The Anosov-Katok method}

We will use the Anosov-Katok method, which we describe here briefly
(and refer to \cite{fayad/katok:2004} for more details and further
references).  The map $f$ will be obtained as a limit of maps $f_n$,
each of which is $C^\infty$-conjugate to a rational rotation
$R_{p_n/q_n}\colon (\theta, y)\mapsto (\theta+p_n/q_n, y)$, so
$$f_n = H_nR_{p_n/q_n} H_n^{-1}$$
where $H_n\in \diff^\infty(\A)$.
The maps $H_n$ are successive compositions of maps:
$$H_n = h_1 \circ \cdots \circ h_{n-1}\circ h_n.$$
The maps $h_n\in \diff^\infty(\A)$ are chosen inductively alongside
with the numbers $p_n/q_n$ with the following condition: If $h_n$ and
$p_n/q_n$ is already chosen, $h_{n+1}$ can be chosen arbitrarily, with
the only restriction that it commutes with $R_{p_n/q_n}$:
$$h_{n+1}R_{\frac{p_n}{q_n}} = R_{\frac{p_n}{q_n}}h_{n+1}.$$
This guarantees that for any $\tau$, 
$$H_{n+1}R_{\tau}H_{n+1}^{-1}= H_nh_{n+1}R_{\tau} h_{n+1}^{-1}H_n^{-1}
 = H_n\left(h_{n+1}R_{(\tau-\frac{p_n}{q_n})}h_{n+1}^{-1}\right)R_{\frac{p_n}{q_n}}H_{n}^{-1},$$
which means that if $\tau = p_{n+1}/q_{n+1}$ is chosen close enough to
$p_n/q_n$ (but different from it), the map $f_{n+1}$ can be made
arbitrarily $C^\infty$-close to $f_n$, and in particular one can make
the $C^n$-distance $\xi_n = d_{C^n}(f_{n+1}, f_n) +
d_{C^n}(f_{n+1}^{-1}, f_n^{-1})$ as small as desired.  Repeating
this process, if the numbers $\xi_n$ are chosen such that $\sum_{n\in
  \N} \xi_n<\infty$, the maps $f_n$ converge in the
$C^\infty$-topology to an element of $\diff^\infty(\A)$.

Since there is a great degree of freedom in the choice of $h_n$ as
well as $p_n/q_n$ at each step, additional restrictions may be placed
in order to guarantee that the limit map has special properties. To
outline the construction that we will follow, our choice of these maps
will be such that there exists a decreasing sequence of essential
annuli $A_n$ such that $h_{n+1}$ is the identity outside $A_n$ and
maps $A_{n+1}$ into $A_n$ in such a way that every horizontal circle
in $A_{n+1}$ becomes $\epsilon$-dense in $A_n$ (with an appropriate
choice of $\epsilon$ depending on $n$) and every vertical segment in
$A_{n+1}$ is mapped to a set with diameter greater than $1/4$. This is
achieved by first using a map that ``twists'' vertical segments inside
$A_{n+1}$ so that any such segment becomes horizontally large, and
then composing with a map that maps $A_{n+1}$ to something that
oscillates vertically inside $A_n$ (see Figure \ref{fig:example}).
\begin{figure}[ht]
\begin{center}
\includegraphics[width=.9\linewidth]{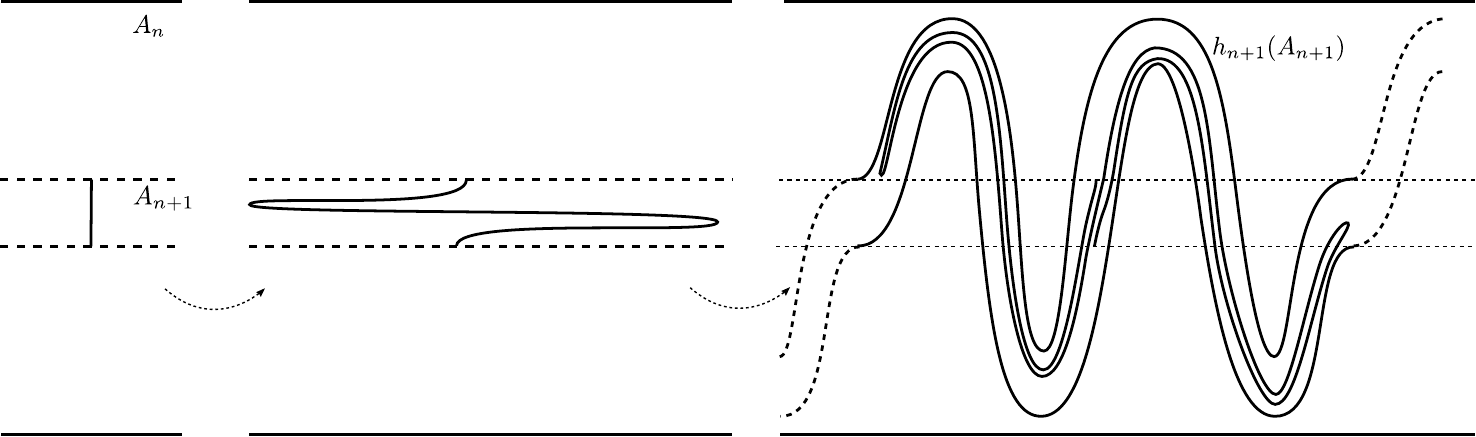}
\caption{How $h_{n+1}$ maps a vertical segment in $A_{n+1}$ (in two steps).}
\label{fig:example}
\end{center}
\end{figure} 

The cofrontier $\Lambda$ will be the intersection of the sets
$\Lambda_n = H_n(A_n)$, which is a decreasing intersection. An
appropriate choice of $p_{n+1}/q_{n+1}$ and the fact that $h_{n+1}$
spreads $\epsilon$-densely in $A_n$ every horizontal circle will
guarantee that $f|_\Lambda$ is minimal and $p_n/q_n$ converges to an
irrational number $\alpha$. Moreover, although the maps $H_n$ are not
required to converge to a homeomorphism, we will guarantee that
$H_n^{-1}$ does converge to a continuous surjection $\Pi$ in the $C^0$
topology. Due to the way in which these maps are defined, this
automatically implies that $\Pi$ semi-conjugates $f$ with the rotation
$R_\alpha$, and the conditions on $h_n$ will guarantee that the
preimage of any point of $\Lambda$ by $\Pi$ has diameter at least
$1/4$.\medskip

Let $(\alpha_n)_{n\in \N}$ and $(\epsilon_n)_{n\in\N}$ be decreasing
sequences of positive real numbers such that $\alpha_1<1$,
$\alpha_n\to 0$ as $n\to \infty$, and $\sum_{n\in \N} \epsilon_n
<\infty$.

\begin{claim}\label{claim:ak-1}
  There exist sequences $(h_n)_{n\in \N}$ in $\diff^\infty(\A)$,
  $(p_n/q_n)_{n\in \N}$ in $\Q$ (where $p_n, q_n$ are relatively prime
  integers, $q_n>0$), and $(M_n)_{n\in \N}$ in $\N$ such that, letting
\begin{itemize}
\item $H_n = h_1h_2\cdots h_n$,
\item $f_n = H_nR_{p_n/q_n}H_n^{-1}$,
\item $A_n = \T^1\times [-\alpha_n, \alpha_n]$,
\item $\Lambda_n = H_n(A_n)$,
\end{itemize}
the following properties hold for $k\geq 1$:
\begin{enumerate}[label={(\arabic*)}]
\item\label{it:1} $h_{k}(A_k) = A_k$, and $h_{k}(z)=z$ for all $z\in \A\sm A_k$;
\item\label{it:4} $\diam(H_k(\{\theta\}\times [-\alpha_k, \alpha_k]))
  > 1/4$ for all $\theta\in \T^1$;
\item\label{it:8} $q_k>13^k$;
\end{enumerate}
and for $k\geq 2$,
\begin{enumerate}[label={(\arabic*)}]
\setcounter{enumi}{3}
\item\label{it:2} $h_{k}R_{p_{k-1}/q_{k-1}} = R_{p_{k-1}/q_{k-1}}h_{k}$;
\item\label{it:3} $H_{k}(\T^1\times \{\alpha\})$ is
  $\epsilon_{k-1}$-dense in $\Lambda_{k-1}$ for all $\alpha\in
  [-\alpha_{k}, \alpha_{k}]$;
\item\label{it:5} $d(\pi_1(h_{k}(z)), \pi_1(z))<3/q_{k-1}$ for all $z\in\A$;
\item\label{it:6} $d_{C^{k}}(f_{k},f_{k-1})<\epsilon_{k-1}$ and
  $d_{C^{k}}(f_{k}^{-1},f_{k-1}^{-1})<\epsilon_{k-1}$;
\item\label{it:6b} $|p_{k}/q_{k} - p_{k-1}/q_{k-1}|<\epsilon_{k-1}$;
\item\label{it:7} $\{f^j_{k}(z) : 0\leq j\leq M_{i+1}\}$ is
  $\epsilon_{i}$-dense in $\Lambda_i$ for all $z\in \Lambda_{k}$ and
  $1\leq i\leq k-1$;
\end{enumerate}
\end{claim}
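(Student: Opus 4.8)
The plan is to build the three sequences by a single induction on $n$, fixing at the $n$-th step, \emph{in this order}, the diffeomorphism $h_n$, then the rational $p_n/q_n$, and finally the integer $M_n$. No circularity will arise: every quantitative demand I impose at step $n$ (a smallness of $q_n^{-1}$, of $|p_n/q_n-p_{n-1}/q_{n-1}|$, of the later $\xi_m$'s) is expressed only in terms of objects already fixed before step $n$ --- the diffeomorphism $H_{n-1}=h_1\circ\cdots\circ h_{n-1}$, the rotation number $p_{n-1}/q_{n-1}$, the integers $M_2,\dots,M_{n-1}$, and the given sequences $(\alpha_j),(\epsilon_j)$.

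\emph{The geometric core: $h_n$.} For $n=1$ I would take $h_1$ to be a smooth Dehn twist supported in $A_1$ and equal to the identity on $\bd A_1$, carrying every fibre $\{\theta\}\times[-\alpha_1,\alpha_1]$ to an arc whose first coordinate sweeps all of $\kreis$; then \ref{it:4} holds for $k=1$ (such an arc has diameter $1/2$), as does \ref{it:1}. For $n\ge 2$, $h_n$ will be the two-stage ``corrugation'' suggested by Figure~\ref{fig:example}: a diffeomorphism equal to the identity near and outside $\bd A_{n-1}$, commuting with $R_{p_{n-1}/q_{n-1}}$ --- equivalently with $R_{1/q_{n-1}}$, as $\gcd(p_{n-1},q_{n-1})=1$, which is \ref{it:2} --- whose restriction to the central band $A_n\subset A_{n-1}$ (i) sends every horizontal circle $\kreis\times\{\alpha\}$, $\alpha\in[-\alpha_n,\alpha_n]$, to a curve that is $(1/q_{n-1})$-dense in $A_{n-1}$, and (ii) sends every fibre $\{\theta\}\times[-\alpha_n,\alpha_n]$ to a curve lying in a vertical strip of width $<6/q_{n-1}$ but of vertical extent close to the full height of $A_{n-1}$; and I would design it to move every point by less than a couple of the $q_{n-1}$ fundamental strips of $R_{1/q_{n-1}}$, giving \ref{it:5}. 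The $R_{1/q_{n-1}}$-symmetry makes the oscillation in (i) have period $\le 1/q_{n-1}$ in $\theta$; and since $H_{n-1}$ is a \emph{fixed} diffeomorphism --- uniformly continuous on the compact annulus $A_{n-1}$ --- there is $\delta_{n-1}>0$, depending only on $H_{n-1}$ and $\epsilon_{n-1}$ (hence already available when $q_{n-1}$ is chosen), with $H_{n-1}$ carrying $\delta_{n-1}$-dense subsets of $A_{n-1}$ onto $\epsilon_{n-1}$-dense subsets of $\Lambda_{n-1}$; taking $q_{n-1}>1/\delta_{n-1}$ turns (i) into \ref{it:3}. Property \ref{it:4} for $k=n$ then follows by propagation: in $H_n(\{\theta\}\times[-\alpha_n,\alpha_n])=h_1\bigl((h_2\circ\cdots\circ h_n)(\{\theta\}\times[-\alpha_n,\alpha_n])\bigr)$, iterating (ii) --- each corrugation \emph{renews} the full height of the next annulus rather than losing it --- makes the inner image a curve filling almost the whole height of $A_1$ inside a thin vertical strip, and the Dehn twist $h_1$ then shears this nearly vertical curve into one whose first coordinate sweeps $\kreis$, so its diameter exceeds $1/4$.

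\emph{The Anosov--Katok steps: $p_n/q_n$ and $M_n$.} With $h_n$ fixed and $H_n=H_{n-1}h_n$, the commutation \ref{it:2} gives $f_{n-1}=H_nR_{p_{n-1}/q_{n-1}}H_n^{-1}$, so $d_{C^n}(f_n,f_{n-1})$ and $d_{C^n}(f_n^{-1},f_{n-1}^{-1})$ depend continuously on $p_n/q_n$ (through conjugation by the fixed $H_n$) and vanish at $p_n/q_n=p_{n-1}/q_{n-1}$; choosing $p_n/q_n$ in lowest terms close enough to $p_{n-1}/q_{n-1}$, which is possible with $q_n$ as large as I like, secures \ref{it:6}, \ref{it:6b} and \ref{it:8}, and I also ask $q_n$ large enough that the $q_n$-periodic $R_{p_n/q_n}$-orbits, pushed forward by the fixed $H_n$, are $\epsilon_{n-1}$-dense in every $H_n(\kreis\times\{\alpha\})$. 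Then $f_n=H_nR_{p_n/q_n}H_n^{-1}$ is periodic of period $q_n$ and the $f_n$-orbit of $z=H_n(w)\in\Lambda_n$ is $H_n$ of the $q_n$ equally spaced points of $\kreis\times\{\pi_2(w)\}$, hence $\epsilon_{n-1}$-dense in $\Lambda_{n-1}$ uniformly in $z$ by the previous line and \ref{it:3}; so $M_n=q_n-1$ yields \ref{it:7} for the index $i=n-1$. For $i<n-1$ the density in \ref{it:7} is the one already arranged at step $i+1$ for $f_{i+1}$; since $M_{i+1}$ is a fixed integer and $d_{C^0}(f_k,f_{i+1})\le\sum_{m=i+1}^{k-1}\xi_m$, it persists for $f_k$ provided each $\xi_m$ is small compared with bounds depending only on the already-fixed $M_2,\dots,M_m$ and $H_2,\dots,H_m$; carrying out every density construction with a fixed safety factor (say $\epsilon_\bullet/3$) absorbs these perturbations, and the same smallness of the $\xi_m$ gives $\sum_m\xi_m<\infty$.

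\emph{Where the difficulty lies.} The soft part --- \ref{it:2}, \ref{it:6}, \ref{it:6b}, \ref{it:7}, \ref{it:8} --- is the routine Anosov--Katok bookkeeping of pushing free parameters to extremes, closing with no circularity. The hard part will be the explicit construction of $h_n$ ($n\ge2$) meeting \ref{it:1}, \ref{it:4}, \ref{it:3}, \ref{it:5} \emph{at the same time}: the tension is that \ref{it:4} wants fibres to become horizontally large while \ref{it:5} forbids $h_n$ from displacing points more than $3/q_{n-1}$ horizontally, so the horizontal spreading cannot come from $h_n$ but must be produced, through $H_n=h_1\circ\cdots\circ h_n$, by the single honest twist $h_1$ --- which forces every corrugation $h_n$ to renew (not squeeze) the vertical extent of a fibre while still spreading horizontal circles densely and respecting both the $R_{1/q_{n-1}}$-symmetry and smoothness. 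Exhibiting such a two-stage diffeomorphism, and checking that ``height renewal'' survives the whole composition, is the heart of the matter; a picture like Figure~\ref{fig:example} is essentially the blueprint for the construction once one is convinced that these competing requirements can indeed be reconciled.
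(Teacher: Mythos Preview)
Your outline is correct and follows the same Anosov--Katok scheme as the paper, but the paper handles the item you flag as ``the heart of the matter'' --- property \ref{it:4} --- by a simpler device that avoids the height-renewal bookkeeping entirely. The paper sets $h_1=\id_\A$ and imposes the harmless side assumption $\alpha_1>1/4$, so that \ref{it:4} at $k=1$ is trivial (the vertical segment itself has length $>1/2$). The inductive step is then direct: one shows that $h_{n+1}(\{\theta\}\times[-\alpha_{n+1},\alpha_{n+1}])$ is $3\delta$-dense in a box $I\times[-\alpha_n,\alpha_n]$, hence contains points $3\delta$-close to every point of some fibre $\{\theta'\}\times[-\alpha_n,\alpha_n]$; choosing $\delta$ from the uniform continuity of the fixed $H_n$ against the excess $s:=\inf_\theta\diam H_n(\{\theta\}\times[-\alpha_n,\alpha_n])-1/4>0$ (from the inductive hypothesis), one reads off $\diam H_{n+1}(\text{fibre})>1/4$ immediately. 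No Dehn twist is needed, and nothing about the vertical extent of $h_2\circ\cdots\circ h_n(\text{fibre})$ has to be tracked through the composition. Your Dehn-twist $h_1$ with ``height renewal'' also works (once one checks that the twisted curve has horizontal extent exceeding the oscillation period of the vertical stage at every step), but it is the more laborious route. Two further minor differences: the paper uses an independent small parameter $\delta$ at step $n+1$ rather than tying the oscillation scale to $1/q_{n-1}$ (so no uniform-continuity constraint is imposed on $q_{n-1}$ in advance), and it obtains $M_{n+1}$ by first passing through an auxiliary irrational rotation number and a compactness argument --- functionally equivalent to your $M_n=q_n-1$, but decoupling the density constant from the choice of $q_{n+1}$.
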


Before explaining how to obtain such sequences, let us prove that they
lead to a map with the required properties.

\begin{claim}\label{claim:ak-2} Using the maps defined in Claim
  \ref{claim:ak-1}, there exist $f\in \diff^\infty(\A)$, $\Pi\in
  C^0(\A,\A)$, and $\rho\in \T^1$ such that, if $\Lambda =
  \bigcap_{n\in \N}\Lambda_n$, then
\begin{itemize}
\item $f_n\to f$ in the $C^\infty$ topology as $n\to \infty$;
\item $H_n^{-1}\to \Pi$ in the $C^0$ topology as $n\to \infty$;
\item $\Pi$ is a monotone semiconjugation between $f$ and $R_\rho$;
\item $\Lambda$ is an essential $f$-invariant decomposable cofrontier
  and $\Pi(\Lambda)=\T^1\times \{0\}$;
\item $\diam(\Pi^{-1}(\Pi(z))) \geq 1/4$ for all $z\in \Lambda$, and
  $\Pi|_{\A\sm \Lambda}$ is injective;
\item $f|_{\Lambda}$ is minimal.
\end{itemize}
\end{claim}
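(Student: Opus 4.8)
The plan is to obtain $f$, $\Pi$ and $\rho$ as limits of the data produced in Claim~\ref{claim:ak-1} and then to read each asserted property off from the quantitative bounds. The conceptual difficulty throughout is that the conjugating diffeomorphisms $H_n$ do \emph{not} converge, so $\Pi$ has to be built as a $C^0$-limit of the inverses $H_n^{-1}$ and all of its features extracted from properties \ref{it:1}, \ref{it:5}, \ref{it:8}. The two easy limits come first: by \ref{it:6}, for every fixed $m$ the tail $\sum_{n\ge m}d_{C^m}(f_{n+1},f_n)\le\sum_{n\ge m}\epsilon_n$ is finite, so $(f_n)$ and $(f_n^{-1})$ are Cauchy in every $C^m$-norm and converge to some $f\in\diff^\infty(\A)$; by \ref{it:6b}, $\sum_n|p_{n+1}/q_{n+1}-p_n/q_n|<\infty$, so $p_n/q_n$ converges to some $\rho$.

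The first real point is the $C^0$-convergence of $H_n^{-1}$. Since $H_{n+1}^{-1}=h_{n+1}^{-1}\circ H_n^{-1}$ and $h_{n+1}^{-1}=\id$ off $A_{n+1}$ by \ref{it:1}, for $m\ge n$ the map $H_m^{-1}$ agrees with $H_n^{-1}$ at every $z$ with $H_n^{-1}(z)\notin A_{n+1}$; and if $H_n^{-1}(z)\in A_{n+1}$, then --- because each $h_j$ with $j\ge n+1$ is supported in $A_j\subseteq A_{n+1}$ and preserves $A_j$ --- the point $H_m^{-1}(z)$ stays in $A_{n+1}$ for all $m\ge n$, so its second coordinate remains in $[-\alpha_{n+1},\alpha_{n+1}]$. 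Combining this vertical ``trapping'' with the horizontal estimate in \ref{it:5} and the bound $\sum_{j\ge n}3/q_j\le\sum_{j\ge n}3\cdot 13^{-j}<\infty$ from \ref{it:8}, I get $\|H_m^{-1}-H_n^{-1}\|_\infty\le 2\alpha_{n+1}+\sum_{j\ge n}3/q_j=:\eta_n$ for all $m\ge n$, with $\eta_n\to0$. Hence $H_n^{-1}\to\Pi$ uniformly, $\Pi\in C^0(\A,\A)$, and $\Pi=\id$ on $\A\sm A_1$ because $\Lambda_n\subseteq\Lambda_1=A_1$ for all $n$. Passing to the limit in $H_n^{-1}f_n=R_{p_n/q_n}H_n^{-1}$ (legitimate, since $f_n\to f$ uniformly, $f_n(A_1)=A_1$, $\Pi$ is uniformly continuous on the compact set $A_1$, and every map involved equals the translation $R_{p_n/q_n}$ off $A_1$) yields $\Pi f=R_\rho\Pi$; and $\Pi$ is surjective because $\Pi(\A)$ is a closed, connected, $R_\rho$-invariant subset of $\A$ containing $\A\sm A_1$, hence equals $\A$. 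Moreover, for $z\in\Lambda\subseteq\Lambda_n$ we have $\pi_2(H_n^{-1}(z))\in[-\alpha_n,\alpha_n]$, so $\Pi(z)\in\T:=\kreis\times\{0\}$; while if $z\notin\Lambda$, then $z\notin\Lambda_N$ for some $N$, so $H_m^{-1}(z)=H_N^{-1}(z)\notin A_N$ for all $m\ge N$ and $\Pi(z)=H_N^{-1}(z)\notin\T$. Thus $\Lambda=\Pi^{-1}(\T)$, $\Pi|_{\A\sm\Lambda}$ is injective, $\Pi(\Lambda)=\T$, and $f(\Lambda)=f(\Pi^{-1}(\T))=\Pi^{-1}(R_\rho(\T))=\Pi^{-1}(\T)=\Lambda$.

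The main obstacle is the monotonicity of $\Pi$. I would prove it by realising each fibre as a \emph{nested} intersection of continua along a carefully chosen subsequence: given $w\in\A$, the composition $H_{n_k}^{-1}\circ H_{n_{k+1}}=h_{n_k+1}\circ\cdots\circ h_{n_{k+1}}$ is supported in $A_{n_k+1}$ and, by the same trapping argument, displaces every point by at most $\eta_{n_k}$; choosing $(n_k)$ strictly increasing with $\eta_{n_k}<\frac{1}{k(k+1)}$, the continua $K_k:=H_{n_k}(\ol{B_{1/k}(w)})$ satisfy $K_{k+1}\subseteq K_k$ and $\bigcap_k K_k=\Pi^{-1}(w)$ (the inclusion $\Pi^{-1}(w)\subseteq K_k$ uses $\|H_{n_k}^{-1}(z)-\Pi(z)\|\le\eta_{n_k}<1/k$), so $\Pi^{-1}(w)$ is a continuum and $\Pi$ is monotone.

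It remains to treat minimality and the continuum-theoretic properties of $\Lambda$. For minimality, fix $z\in\Lambda$ and $i\in\N$; for each $k>i$, property \ref{it:7} makes $\{f_k^{\,j}(z):0\le j\le M_{i+1}\}$ $\epsilon_i$-dense in $\Lambda_i$, and letting $k\to\infty$ (with $i$ and $M_{i+1}$ fixed and $f_k^{\,j}\to f^{\,j}$ uniformly) shows the $f$-orbit segment $\{f^{\,j}(z):0\le j\le M_{i+1}\}\subseteq\Lambda$ is $2\epsilon_i$-dense in $\Lambda_i\supseteq\Lambda$; since $\epsilon_i\to0$, every orbit in $\Lambda$ is dense, so $f|_\Lambda$ is minimal. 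Then $\Lambda$ is an essential annular continuum (a decreasing intersection of the essential closed annuli $\Lambda_n=H_n(A_n)$), and minimality forces it to have empty interior and to coincide with the common boundary of its two complementary domains $U^\pm$ (otherwise $\bd\Lambda$, resp.\ $\bd U^-$, would be a proper closed nonempty $f$-invariant subset of $\Lambda$); an essential annular continuum equal to the common boundary of its complementary domains is a circloid, since a proper essential annular subcontinuum $\Lambda'$ with complementary domains $W^\pm\supseteq U^\pm$ would give $\Lambda=\bd U^+\subseteq\ol{W^+}$, hence $\Lambda\cap W^-=\emptyset$, whence $\Lambda=\bd U^-\subseteq\ol{W^-}$ forces $\Lambda\subseteq\bd W^-\subseteq\Lambda'$. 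So $\Lambda$ is a cofrontier; it is decomposable because $\Pi|_\Lambda\colon\Lambda\to\T$ is a monotone surjection whose fibres are the sets $\Pi^{-1}(w)\subseteq\Lambda$, so the $\Pi|_\Lambda$-preimages of two closed arcs covering $\kreis$ and meeting in two points are proper subcontinua covering $\Lambda$. If $\rho$ were rational, Theorem~\ref{t.poincare} applied to the circloid $\Lambda$ with decomposable boundary would yield a periodic point of $f$ in $\Lambda$, impossible since $f|_\Lambda$ is minimal on the infinite set $\Lambda$; hence $\rho\notin\Q$. Finally, for the diameter bound, fix $\theta\in\kreis$; the continua $S_n:=H_n(\{\theta\}\times[-\alpha_n,\alpha_n])\subseteq\Lambda_n$ have $\diam S_n>1/4$ by \ref{it:4}, a Hausdorff limit $S$ of a subsequence is a continuum with $\diam S\ge1/4$ contained in $\bigcap_n\Lambda_n=\Lambda$, and since $\Pi(S_n)\subseteq\ol{B_{\alpha_n+\eta_n}((\theta,0))}$ we obtain $\Pi(S)=\{(\theta,0)\}$, so $S\subseteq\Pi^{-1}((\theta,0))$ and $\diam\Pi^{-1}((\theta,0))\ge1/4$; as $\Pi(z)\in\T$ for every $z\in\Lambda$, this is the asserted estimate.
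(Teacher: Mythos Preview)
Your proof is correct and follows the same overall architecture as the paper's, but two steps are handled by genuinely different arguments.

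For the monotonicity of $\Pi$, the paper simply invokes the classical fact (citing Whyburn) that a uniform limit of homeomorphisms is monotone. Your nested-intersection argument, realising each fibre $\Pi^{-1}(w)$ as $\bigcap_k H_{n_k}(\ol{B_{1/k}(w)})$ along a subsequence with $\eta_{n_k}<\tfrac{1}{k(k+1)}$, is a clean self-contained alternative that avoids the external reference.

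For decomposability of $\Lambda$, the paper takes Hausdorff limits of $H_n(I_j\times[-\alpha_n,\alpha_n])$ for the two half-circles $I_j$ and uses the quantitative bound $\sum_i 3/q_i<1/4$ (this is precisely where the specific constant $13$ in \ref{it:8} matters) to show the limit pieces are proper. Your route instead exploits the monotonicity of $\Pi|_\Lambda$ already established: the $\Pi$-preimages of two closed arcs covering $\kreis$ are proper subcontinua by Lemma~\ref{l.monotone_preimage}. This is more conceptual and does not need the sharp constant in \ref{it:8}; any summable $1/q_k$ would suffice for your version.

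The remaining steps (convergence of $f_n$, $H_n^{-1}$, $p_n/q_n$; the semiconjugacy; minimality from \ref{it:7}; the diameter bound via Hausdorff limits of $H_n(\{\theta\}\times[-\alpha_n,\alpha_n])$; the cofrontier argument) match the paper's, with your write-up of the cofrontier/circloid verification being a bit more explicit than the paper's reference to Corollary~\ref{c.almost_automorphy}. Your deduction that $\rho\notin\Q$ via Theorem~\ref{t.poincare} and minimality is an extra observation not contained in the paper's proof of this claim.
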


\begin{proof}
Note that
$$d_{C^0}(H^{-1}_n, H^{-1}_{n+m}) = d_{C^0}(H^{-1}_qnH_{n+m}, 
H^{-1}_{n+m}H_{n+m}) = d_{C^0}(h_{n+1}h_{n+2}\cdots h_{n+m}, \id).$$
By \ref{it:5} one has
\[
|\pi_1(h_{n+1}h_{n+2}\cdots h_{n+m}(z))-\pi_1(z)| \leq \sum_{i=n}^{n+m-1} \frac{3}{q_{i}},
\]
and by \ref{it:1}, since each $h_{n+i}$ leaves $A_n$ invariant and
is the identity outside $A_n$,
\[
|\pi_2h_{n+1}h_{n+2}\cdots h_{n+m}(z)-\pi_2(z)| \leq 2\alpha_n.
\]
Since $\alpha_n\to 0$ and $\sum_i 1/q_i < \infty$, we see that the two
coordinates of the map $$z\mapsto h_{n+1}h_{n+2}\cdots h_{n+m}(z)-z$$ are
uniformly small if $n,m$ are large enough, and therefore
$(H^{-1}_i)_{i\in \N}$ is a Cauchy sequence in $C^0(\A, \A)$, so it
converges to some continuous (and surjective) map $\Pi$. Note also
that $H^{-1}_i(z)$ is eventually constant if $z\notin \Lambda$. Since
each $H^{-1}_i$ is a homeomorphism onto its image, it follows easily
that $\Pi|_{\A\sm \Lambda}$ is injective, and
$\Pi(\Lambda)=\T^1\times\{0\}$.

By \ref{it:6} and the fact that $\sum_n {\epsilon_n}<\infty$, the
sequences $(f_n)_{n\in \N}$ and $(f_n^{-1})_{n\in\N}$ are Cauchy in
the $C^r$-topology for any $r$, the sequence $(f_n)_{n\in \N}$
converges to some $C^\infty$ diffeomorphism $f$ in the
$C^\infty$-topology.

By \ref{it:6b}, the sequence $(p_n/q_n)_{n\in \N}$ is Cauchy and
therefore has some limit $\rho\in \T^1$.
oreover, since
$$H^{-1}_n f_n = H_n^{-1}(H_nR_{p_n/q_n}H_n^{-1}) = R_{p_n/q_n} H_{n}^{-1},$$
taking limits as $n\to \infty$ one deduces that $\Pi f = R_{\rho}\Pi$.
The fact that $\Pi$ is a uniform limit of homeomorphisms implies that
$\Pi$ is monotone (see for instance \cite{whyburn-analytic}), so the preimages of points are connected.
Moreover, note that $H_n(\{\theta\}\times[-\alpha_n,\alpha_n])$ is
connected and has diameter at least $1/4$ due to \ref{it:4}.
Choosing $z_n, z_n'\in H_n(\{\theta\}\times[-\alpha_n,\alpha_n])$ such
that $d(z_n,z_n')\geq 1/4$ and taking convergent subsequences of
$(z_n)$ and $(z_n')$ one finds two points $z,z'\in \Lambda$ such that
$d(z,z')\geq 1/4$ and $\Pi(z)=\Pi(z')=(\theta,0)$, showing that
$\diam(\Pi^{-1}(\theta,0)) \geq 1/4$. This means that
$\diam(\Pi^{-1}(\Pi(z)))\geq 1/4$ for all $z\in \Lambda$.

The minimality of $f|_{\Lambda}$ follows immediately from
\ref{it:7}.  Since the set $\Lambda= \bigcap_{n\in \N} \Lambda_n$ is
a decreasing intersection of essential closed topological annuli, it
is an essential annular continuum, and using the fact that
$f|_\Lambda$ is minimal we deduce that $\Lambda$ is an essential
cofrontier (by an argument already used in the proof of Corollary
\ref{c.almost_automorphy}).

To see that $\Lambda$ is decomposable, let $I_1 = [0,1/2]$ and $I_2 =
[1/2,1]$ be the upper and lower half-circles, respectively, and let
$\Lambda_n^j = H_n(I_j\times [-\alpha_n, \alpha_n])$ for $j\in
\{1,2\}$. For each $j$, let $\Lambda^j$ be the Hausdorff limit of some
convergent subsequence of $(\Lambda_n^j)_{n\in \N}$. Then $\Lambda^1$
and $\Lambda^2$ are compact connected nonempty sets, and from the fact
that $\Lambda_n=\Lambda^1_n\cup \Lambda^2_n$ it is easy to verify that
$\Lambda = \Lambda^1\cup \Lambda^2$. On the other hand, note that from
\ref{it:8} one has $\mu := \sum_{i=1}^\infty 3/q_i <
3\sum_{i=1}^\infty 1/13^i = 1/4$, and \ref{it:5} implies that
$d(\pi_1(H_n(z)), \pi_1(z))<\mu$ (as noted in the beginning of the
proof). Thus $\pi_1(\Lambda^j_n)=
\pi_1(H_n(I_j\times[-\alpha_n,\alpha_n]))$ is an interval of length at
most $1/2 + 2\mu$. Therefore $\pi_1(\Lambda^j)$ has length at most
$1/2+2\mu < 1$, and since $\Lambda$ is essential this means that
$\Lambda^j\neq \Lambda$ for $j\in \{1,2\}$, showing that $\Lambda =
\Lambda^1\cup \Lambda^2$ is a decomposition of $\Lambda$ into nonempty
proper subcontiua.
\end{proof}

\begin{proof}[Proof of Claim \ref{claim:ak-1}]
  We define the sequences recursively. Start with $p_1=0$ and
  $q_1=13^2$. To simplify the construction we will assume
  $\alpha_1>1/4$, so that letting $h_1=\id_\A$ conditions
  \ref{it:1}-\ref{it:8} automatically hold for $k=1$. Note that (9) is vacuously true when $k=1$, so we choose $M_1$ arbitrarily.

  Assuming that $h_k$, $p_k/q_k$, and $M_k$ are already defined for
  $1\leq k\leq n$ and satisfy the required properties, we will define
  them for $k=n+1$.
  As explained in the introduction of this section, as long as
  $p_{n+1}/q_{n+1}$ is chosen close enough to $p_n/q_n$ we can
  guarantee that $f_{n+1}$ will be arbitrarily close to $f_{n}$ in the
  $C^n$-topology.  Therefore we will first choose $h_{n+1}$
  accommodating to our needs, and later we will choose
  $p_{n+1}/q_{n+1}$ close enough to $p_n/q_n$ (but different from it)
  so as to guarantee conditions \ref{it:6}, \ref{it:7} and
  \ref{it:8}.

  In order to define our map $h_{n+1}$ we first introduce two
  auxiliary maps. Given $\delta>0$ and positive integers $K,n,q$ the
  first map $V=V_{n,q,\delta}\in \diff^\infty(\A)$ will have the
  following properties:
\begin{itemize}
\item[(V1)] $V(z) = z$ for $z\in \A\sm A_n$;
\item[(V2)] If $I\subset \T^1$ is any interval of length greater than
  $\delta$, then there is a subinterval of $I$ with endpoints
  $\theta_0, \theta_1$ such that $V$ maps each set
  $\{\theta_0\}\times[-\alpha_{n+1},\alpha_{n+1}]$ and
  $\{\theta_1\}\times[-\alpha_{n+1},\alpha_{n+1}]$ into a different
  connected component of $A_n\sm (\T^1\times [-\alpha_n+\delta,
  \alpha_n-\delta])$.
\item[(V3)] $d(\pi_1(V(z)), \pi_1(z)) < \delta$ for all $z\in \A$.
\item[(V4)] $V$ commutes with $R_{\frac{1}{q}}$.
\end{itemize}

There are several ways to define such a map. We describe one
way\footnote{An alternative definition (which we will not develop
  further) leads to an area-preserving map $V$ with similar
  properties; roughly, this is done by using a map which expands
  $A_{n+1}$ horizontally, contracts vertically, and folds it back into
  $A_n$ while keeping all the required properties. By doing this, one
  could guarantee that the maps $f_n$ (and therefore limit map $f$ in
  Claim \ref{claim:ak-2}) are area-preserving.}, depicted in Figure
\ref{fig:map-V}: let $\phi\colon \R\to \R$ be a $C^\infty$ bump
function such that $\phi(t)=1$ if $|t|<\alpha_{n+1}$, $\phi(t)=0$ if
$|t|>\alpha_n$ and $0\leq \phi(t)\leq 1$ for all $t\in \R$, and
consider the vector field
$$V(\theta, y)=\left(0,\, \sin\big (2\pi \frac{\delta x}{4q}\big)\phi(y)\right)$$ on $\A$. 
If $t\in \R$ is large enough, the time-$t$ map of the flow induced by $V$ is a 
diffeomorphism satisfying the requred properties (see Figure \ref{fig:map-V}).

\begin{figure}
\begin{center}
\includegraphics[height=5cm]{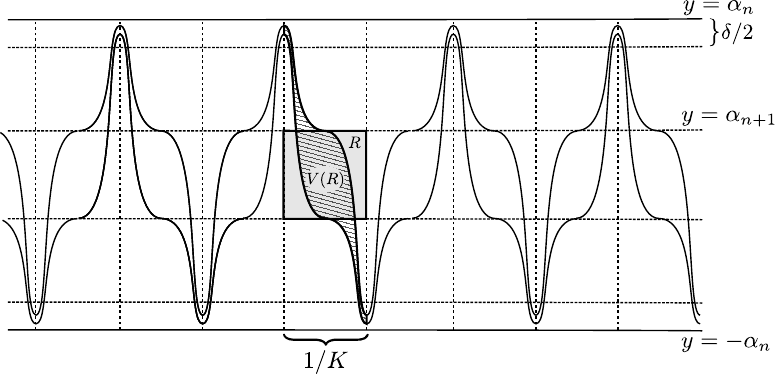}
\caption{A possible map $(\theta,y)\mapsto V(\theta,y)$}
\label{fig:map-V}
\end{center}
\end{figure}

The second map $T=T_{n,q}\in \diff^\infty(\A)$ is such that:
\begin{itemize}
\item[(T1)] $T(z)=z$ for all $z\in \A\sm A_n$ and all $z\in \T^1\times \{0\}$;
\item[(T2)] $T(A_{n+1})=A_{n+1}$;
\item[(T3)] $[\theta-1/q, \theta+1/q]\subset
  \pi_1(T(\{\theta\}\times[-\alpha_{n+1}, \alpha_{n+1}]))$ for all
  $\theta\in \T^1$;
\item[(T4)] $d(\pi_1(T(z)), \pi_1(z)))\leq 2/q$ for all $z\in \A$;
\item[(T5)] $T$ commutes with $R_{1/q}$.
\end{itemize}

Such a map may be defined explicitly as 
$$(\theta,y)\mapsto \left(\theta+\frac{\psi(y)y}{\alpha_{n+1}q_n},\, y\right),$$
where $\psi\colon \R\to \R$ is a $C^\infty$ bump function such that
$0\leq \psi(z)\leq 1$ for all $z\in \R$, $\psi$ is equal to $1$ on
$[-\alpha_{n+1}, \alpha_{n+1}]$ and $0$ outside $[-\alpha_n,
\alpha_n]$ and $|\psi(y)y|\leq 2\alpha_{n+1}$ for all $y\in\R$.

Let $s= \sup_{\theta\in \T^1} \diam(H_n(\{\theta\}\times [-\alpha_n,
\alpha_n])) - 1/4$. By continuity of $H_n$ and property \ref{it:4} it
follows that $s>0$. Let $\epsilon = \min\{s, \epsilon_{n+1}\}$, and
choose $0<\delta$ so small that $d(H_n(z), H_n(z'))<\epsilon/2$
whenever $d(z,z')<3\delta$, and in addition $\delta<1/q_n$ and
$\delta<(\alpha_{n}-\alpha_{n+1})/4$.

We now claim that the map $h_{n+1} = V\circ T$ satisfies the required
properties using $q=q_n$. First, note that properties \ref{it:1} and
\ref{it:2} for $k=n+1$ follow directly from (V1), (T1), (V4) and (T5).

Let us further note that whenever $\gamma\subset A_{n+1}$ is an arc such that
$\diam(\pi_1(\gamma))>\delta$, then $V(\gamma)$ is $3\delta$-dense in
$\pi_1(\gamma)\times [-\alpha_n, \alpha_n]$. Indeed, the properties of
$V$ imply that $[-\alpha_{n}+\delta, \alpha_n-\delta]\subset
\pi_2(V(\gamma))$ and $\pi_1(V(\gamma))$ lies in an
$\delta$-neighborhood of $\pi(\gamma)$. Thus, given $z\in
\pi_1(\gamma)\times [-\alpha_n+\delta, \alpha_n-\delta]$, there exists
$z'\in \gamma$ such that $\pi_2(z')=\pi_2(z)$ and $d(z', z)<2\delta$.
From this it follows easily that $V(\gamma)$ is $3\delta$-dense in
$\pi_1(\gamma)\times [-\alpha_n, \alpha_n]$ as claimed.

Since, for any $\alpha\in [-\alpha_{n+1}, \alpha_{n+1}]$, the set
$T(\T^1\times\{\alpha\})$ is contained in $A_{n+1}$, the previous
remarks imply that
$h_{n+1}(\T^1\times\{\alpha\})=V(T(\T^1\times\{\alpha\}))$ is
$3\delta$-dense in $A_n$, anqd since
$H_{n+1}(\T^1\times\{\alpha\})=H_n(h_{n+1}(\T^1\times \{\alpha\}))$,
the choice of $\delta$ implies that $H_{n+1}(\T^1\times\{\alpha\})$ is
$\epsilon$-dense in $H_n(A_n)=\Lambda_n$. Since
$\epsilon<\epsilon_{n+1}$, this shows that property \ref{it:3} holds
for $k=n+1$.
 
Given $\theta\in \T^1$, let $\gamma=T(\{\theta\}\times [-\alpha_{n+1},
\alpha_{n+1}])$. Then $[\theta-1/q_n,\theta+1/q_n]\subset
\pi_1(\gamma)$, and in particular $\diam(\pi_1(\gamma))> 2/q_n>
\delta$, so that $V(\gamma)$ is $3\delta$-dense in
$\pi_1(\gamma)\times [-\alpha_n,\alpha_n]$. Recall from our choice of
$\epsilon$ that the diameter of $H_n(\{\theta\}\times
[-\alpha_n,\alpha_n])$ is at least $1/4+\epsilon$. Thus we can find
$z_1,z_2\in \{\theta\}\times [-\alpha_n,\alpha_n]$ such that
$d(H_n(z_1), H_n(z_2))\geq 1/4+\epsilon$. But then there exist
$z_1',z_2'\in V(\gamma)$ such that $d(z_i,z_i') < 3\delta$, and our
choice of $\delta$ implies that $d(H_n(z_i), H_n(z_i'))<\epsilon/2$
for $i\in \{1,2\}$. Thus $d(H_n(z_1'), H_n(z_2'))>1/4$, and it follows
that $H_n(V(\gamma))$ has diameter greater than $1/4$. Since
$V(\gamma)=h_{n+1}(\{\theta\}\times [-\alpha_{n+1}, \alpha_{n+1}])$,
we see that property (\ref{it:4}) holds for $k=n+1$.

To verify property (\ref{it:5}) for $k=n+1$, note that, using the
properties of $V$ and $T$ and the fact that $\delta<1/q_n$,
$$d(\pi_1(h_{n+1}(z), \pi(z)) = d(\pi_1(V(T(z))), \pi_1(z)) \leq  \delta + 2/q_n < 3/q_n.$$

The remaining properties are guaranteed by choosing $p_{n+1}/q_{n+1}$
close enough to $p_n/q_n$. Indeed, as noted at the beginning of the
proof, properties \ref{it:6} and \ref{it:7} are guaranteed in this
way, and property \ref{it:8} will hold if $p_{n+1}/q_{n+1}$ is close
to $p_n/q_n$ but not equal to it. Therefore there is an open interval
$I$ containing $p_n/q_n$ such that properties \ref{it:6}
\ref{it:6b} and \ref{it:8} hold as long as $p_{n+1}/q_{n+1}\in
I\sm\{p_n/q_n\}$.

By our induction assumption, property \ref{it:7} holds for $f_n$, so
that $\{f^k_n(z): 0\leq k\leq M_i\}$ is $\epsilon_i$-dense in
$\Lambda_i$ for each $z\in \Lambda_n$ and $1\leq i\leq n-1$. However,
this condition is $C^0$-open, meaning that it still holds if one
replaces $f_n$ by a map which is $C^0$-close enough to $f_n$.
Therefore, reducing the interval $I$ we may assume that whenever
$p_{n+1}/q_{n+1}\in I\sm\{p_n/q_n\}$, the set $\{f^k_{n+1}(z) : 0\leq
k\leq M_{i+1}\}$ is $\epsilon_i$-dense in $\Lambda_i$ when $1\leq
i\leq n-1$ (noting that $\Lambda_{n+1}\subset \Lambda_n$). It remains
to verify that there exists $M_{n+1}$ such that the latter also holds
when $i=n+1$.

Choose an irrational $\theta\in I$ and note that by property
\ref{it:3} the set $H_{n+1}(\T^1\times \{\alpha\})$ is
$\epsilon_{n+1}$-dense in $\Lambda_n$ for any $\alpha\in
[-\alpha_{n+1},\alpha_{n+1}]$, so the map
$g=H_{n+1}R_{\theta}H_{n+1}^{-1}$ has the property that every
$g$-orbit of a point in $\Lambda_{n+1}$ is $\epsilon_{n+1}$-dense in
$\Lambda_n$. By compactness, we may choose $M_{n+1}$ such that
$\{g^k(z):0\leq k\leq M_{n+1}\}$ is $\epsilon_{n+1}$-dense in
$\Lambda_n$ for any $z\in \Lambda_{n+1}$. Since this is an open
condition, if $p_{n+1}/q_{n+1}\in I\sm\{p_n/q_n\}$ is chosen close
enough to $\theta$ we have that $\{f_{n+1}^k(z):0\leq k\leq M_{n+1}\}$
is $\epsilon_{n+1}$-dense in $\Lambda_n$, as required. Thus property
\ref{it:7} holds, completing the proof.

\end{proof}

\providecommand{\bysame}{\leavevmode\hbox to3em{\hrulefill}\thinspace}
\providecommand{\MR}{\relax\ifhmode\unskip\space\fi MR }
\providecommand{\MRhref}[2]{%
  \href{http://www.ams.org/mathscinet-getitem?mr=#1}{#2}
}
\providecommand{\href}[2]{#2}


\begin{thebibliography}{KLCN15}

\bibitem[AK70]{anosov/katok:1970}
D.~Anosov and A.~Katok, \emph{New examples in smooth ergodic theory. Ergodic
  diffeomorphisms}, Trans. Mosc. Math. Soc. \textbf{23} (1970), 1--35.

\bibitem[Aus88]{auslander1988minimal}
J.~Auslander, \emph{Minimal flows and their extensions}, Elsevier Science Ltd,
  1988.

\bibitem[Bel78]{bell} H.~Bell, \emph{ A fixed point theorem for plane homeomorphisms}, Fund. Math. 100 (1978), no. 2, 119Ð128. 

\bibitem[BG91]{BargeGillette1991CofrontierRotationSets}
M.~Barge and R.~M.~Gillette, \emph{Rotation and periodicity in plane separating
  continua}, Ergodic Theory Dyn.~Syst. \textbf{11} (1991), 619--631.

\bibitem[BGM93]{brechner-et-al}
B.~L.~Brechner, M.~D.~Guay, and J.~C.~Mayer, \emph{The rotational
  dynamics of cofrontiers}, Continuum theory and dynamical systems, Lecture
  Notes in Pure and Appl. Math., vol. 149, Dekker, New York, 1993, pp.~59--82.

\bibitem[Bin51]{Bing1951Pseudocircle}
R.~H.~Bing, \emph{{Concerning hereditarily indecomposable continua.}}, Pac. J.
  Math. \textbf{1} (1951), 43--51.

\bibitem[BO]{BoronskiOprocha2014InverseLimitPseudocircle}
J.~Boronski and P.~Oprocha, \emph{Rotational chaos and strange attractors on
  the 2-torus}, Preprint 2014.

\bibitem[CL51a]{cartwright-littlewood}
M.~L.~Cartwright and J.~E. Littlewood, \emph{Some fixed point theorems}, Ann.
  of Math. (2) \textbf{54} (1951), 1--37.

\bibitem[Cha73]{charatonik-1}
J.~J.~Charatonik, \emph{On decompositions of continua}, Fund. Math. \textbf{79}
  (1973), no.~2, 113--130.

\bibitem[Dav]{Davalos2013SublinearDiffusion}
P.~Davalos, \emph{On annular maps of the torus and sublinear diffusion},
  To appear in Journal of the Institute of Mathematics of Jussieu, 2016.

\bibitem[Dav86]{daverman}
R.~J.~Daverman, \emph{Decompositions of manifolds}, Pure and Applied
  Mathematics, vol. 124, Academic Press Inc., Orlando, FL, 1986.

\bibitem[Ell69]{ellis1969lectures}
R.~Ellis, \emph{Lectures on topological dynamics}, WA Benjamin, 1969.

\bibitem[FK04]{fayad/katok:2004}
B.~Fayad and A.~Katok, \emph{Constructions in elliptic dynamics}, Ergodic
  Theory Dyn. Syst. \textbf{24} (2004), no.~5, 1477--1520.

\bibitem[FS67]{fitzgerald-swingle}
R.~W.~FitzGerald and P.~M. Swingle, \emph{Core decomposition of continua},
  Fund. Math. \textbf{61} (1967), 33--50.

\bibitem[FLC03]{franks/lecalvez:2003}
J.~Franks and P.~Le~Calvez, \emph{Regions of instability for non-twist maps},
  Ergodic Theory Dyn. Syst. \textbf{23} (2003), no.~1, 111--141.

\bibitem[GKT14]{GuelmanKoropeckiTal2012Annularity}
N.~Guelman, A.~Koropecki, and F.~A. Tal, \emph{A
  characterization of annularity for area-preserving toral homeomorphisms},
  Math. Z. \textbf{276} (2014), no.~3-4, 673--689.

\bibitem[Han82]{handel:1982}
M.~Handel, \emph{A pathological area preserving ${C}^\infty$ diffeomorphism of
  the plane}, Proc. Am. Math. Soc. \textbf{86} (1982), no.~1, 163--168.

\bibitem[Her83]{herman:1983}
M.~Herman, \emph{Une {m\'{e}thode} pour minorer les exposants de {{L}yapunov}
  et quelques exemples montrant le {caract\`{e}re} local d'un
  {th\'{e}or\`{e}me} {d'Arnold} et de {Moser} sur le tore de dimension 2},
  Comment.\ Math.\ Helv. \textbf{58} (1983), 453--502.

\bibitem[Her86]{herman:1986}
\bysame, \emph{Construction of some curious diffeomorphisms of the {R}ieman
  sphere}, J. Lond. Math. Soc. \textbf{34} (1986), 375--384.

\bibitem[HY61]{HockingYoung1961Topology}
JG~Hocking and GS~Young, \emph{Topology}, Addison Wesley, 1961.

\bibitem[J{\"a}g09a]{jaeger:2009c}
T.~J{\"a}ger, \emph{The concept of bounded mean motion for toral
  homeomorphisms}, Dyn. Syst. \textbf{24} (2009), no.~3, 277--297.

\bibitem[J{\"a}g09b]{jaeger:2009b}
\bysame, \emph{Linearisation of conservative toral homeomorphisms}, Invent.
  Math. \textbf{176} (2009), no.~3, 601--616.

\bibitem[J{\"a}g10]{jaeger:2010a}
\bysame, \emph{Periodic point free homeomorphisms of the open annulus -- from
  skew products to non-fibred maps}, Proc. Am. Math. Soc. \textbf{138} (2010),
  1751--1764.

\bibitem[JP]{JaegerPasseggi2013THIrrational}
T.~J{\"ager} and A.~Passeggi, \emph{On torus homeomorphisms semiconjugate to
  irrational rotations}, Ergodic Theory Dyn.~Syst. \textbf{35} (2015), no.~7, 2114--2137.

\bibitem[KY94]{Kennedy1994Pseudocircles}
J.~A.~Kennedy and J.~A.~Yorke, \emph{Pseudocircles in dynamical systems}, Trans.
  Am. Math. Soc. \textbf{343} (1994), no.~1, 349--366.

\bibitem[Kor10]{koropecki2010aperiodic}
A.~Koropecki, \emph{Aperiodic invariant continua for surface homeomorphisms},
  Math. Zeitschrift \textbf{266} (2010), no.~1, 229--236.

\bibitem[KLCN15]{KoroLeCalvezNassIrrPE}
A.~Koropecki, P.~Le~Calvez, and M.~Nassiri, \emph{Prime ends rotation numbers
  and periodic points}, Duke Math. J \textbf{164} (2015), no.~3, 403--472.

\bibitem[KN10]{KoropeckiNassiri2010GenericTransitivity}
A.~Koropecki and M.~Nassiri, \emph{Transitivity of generic semigroups of
  area-preserving surface diffeomorphisms}, Math.~Z. \textbf{266} (2010),
  no.~3, 707--718.

\bibitem[Kur27]{kura-II}
C.~Kuratowski, \emph{Th\'eorie des continus irr\'eductibles entre deux points
  {II}}, Fund. Math. \textbf{10} (1927), 225--275.

\bibitem[Kur28]{kura-frontier}
\bysame, \emph{Sur la structure des fronti\`eres communes `a deux r\'egions},
  Fund. Math. \textbf{12} (1928), 20--42.

\bibitem[LC88]{LeCalvez1988BirkhoffAttractor}
P.~Le~Calvez, \emph{{Propri\'et\'es des attracteurs de Birkhoff.}}, Ergodic
  Theory Dyn. Syst. \textbf{8} (1988), no.~2, 241--310.

\bibitem[MMO01]{MMO} A.~O.~Maner, J.~C.~Mayer, and L.~G.~Oversteegen, \emph{Cantor sets of arcs in decomposable local Siegel disk boundaries} Topology Appl. 112 (2001), no. 3, 315Ð336.

\bibitem[New92]{Newman1992PlaneTopology}
M.H.A.~Newman, \emph{Elements of the topology of plane sets of points}, Dover
  Publications, 1992.

\bibitem[Rak77]{rakowski-1}
Z.~M.~Rakowski, \emph{Monotone decompositions of continua}, Fund. Math.
  \textbf{94} (1977), no.~2, 155--163.

\bibitem[Tur12]{turpin} M.~Turpin, \emph{A pseudorotation with quadratic irrational rotation number},  Proc. Amer. Math. Soc. 140 (2012), no. 1, 227Ð233. 

\bibitem[Vee65]{veech1965almost}
W.~A.~Veech, \emph{Almost automorphic functions on groups}, Am. J. Math.
  \textbf{87} (1965), no.~3, 719--751.

\bibitem[Vou74]{vought-1}
E.~J.~Vought, \emph{Monotone decompositions of continua not separated by any
  subcontinua}, Trans. Amer. Math. Soc. \textbf{192} (1974), 67--78.

\bibitem[Wal91]{WalkerPrimeEnd}
R.~Walker, \emph{{Periodicity and Decomposability of Basin Boundaries with
  Irrational Maps on Prime Ends}}, Transactions of the A.M.S. \textbf{324}
  (1991), no.~1, 303--317.

\bibitem[Why55]{whyburn-analytic}
G.~T.~Whyburn, \emph{Analytic topology}, American Mathematical Society.
  Colloquium publications, no. v. 28, pt. 2, American Mathematical Soc., 1955.

\end{thebibliography}
\end{document}